\def\simptexthickness{1.4pt}
\def\ddraw{\draw[postaction=decorate]}
\newcommand{\simptex}[2][]{
\begin{tikzpicture}[line width=\simptexthickness,rounded corners,decoration={markings,mark=at position 0.46 with {\arrowreversed{Stealth[length=6pt,width=6pt]}}},#1]
#2
\end{tikzpicture}
}
\def\onesimplexvertices #1-#2- {
\node (nx) at (0,0) {$#1$};
\node (ny) at (2,0) {$#2$};
}
\newcommandx{\onesimplex}[3][1={-},2={},3={<-}]{
\begin{tikzpicture}
\onesimplexvertices#1 - - 
\draw[#3] (nx.east)  to[edge label={$#2$}] (ny.west);
\end{tikzpicture}
}
\def\twosimplexvertices #1-#2-#3- {
\node (nx) at (0,0) {};%{$#1$};
\node (ny) at (1,1.4) {};%{$#2$};
\node (nz) at (2,0) {};%{$#3$};
\draw (nx.west) node {\ensuremath{#1}};
\draw (ny.north) node {\ensuremath{#2}};
\draw (nz.east) node {\ensuremath{#3}};
}
\def\twosimplexedges #1;#2,#3,#4, {
\draw[postaction=decorate] (nx.center)  to[edge label={\ensuremath{#2}}] (ny.center);
\draw[postaction=decorate] (ny.center)  to[edge label={\ensuremath{#3}}] (nz.center);
\draw[postaction=decorate] (nx.center)  to[swap,edge label={\ensuremath{#4}}] (nz.center);
}
\newcommandx{\twosimplex}[4][1={--},2={,,},3={},4={line width=\simptexthickness,rounded corners,decoration={markings,mark=at position 0.46 with {\arrowreversed{Stealth[length=6pt,width=6pt]}}}}]{
%\tikzsetnextfilename{twosimplex\text{#1#2#3}}
\begin{tikzpicture}[#4]
\twosimplexvertices#1 - - - 
\twosimplexedges 4;#2 , , , 
\node (face) at (1,0.5) {$#3$};
\end{tikzpicture}
}
\newcommandx\internaltwosimplex[4][1={--},2={,,},3={},4={line width=\simptexthickness,rounded corners,decoration={markings,mark=at position 0.46 with {\arrowreversed{Stealth[length=6pt,width=6pt]}}}}]{
\def\threesimplexvertices #1-#2-#3-#4- {
\node[left,inner sep=1.6pt,scale=1] (va) at (0,0){\scriptsize\ensuremath{#1}};
\node[above,inner sep=1.9pt,scale=1] (vb) at (3,4.5){\scriptsize\ensuremath{#2}};
\node[right,inner sep=1.6pt,scale=1] (vc) at (6,0){\scriptsize\ensuremath{#3}};
\node[inner sep=0.01pt,scale=1] (vd) at (3,2)["\scriptsize\ensuremath{#4}" below]{};
}
\def\threesimplexfaces #1,#2,#3,#4, {
\node[scale=1] (fa) at (3,0.85) {\reflectbox{\scriptsize\ensuremath{#4}}};
\node[scale=1] (fb) at (2.2,2.3) {\reflectbox{\scriptsize\ensuremath{#1}}};
\node[scale=1] (fc) at (3.8,2.3) {\reflectbox{\scriptsize\ensuremath{#2}}};
\node[scale=1] (fd) at (3,1.55) {};
\draw[scale=0.8] (3.2,1.4) node{\ensuremath{\color{\colorFm}#3}};
}
\def\Gsimplexfaces #1,#2,#3,#4, {
\node[scale=1] (fa) at (3,0.85) {\ensuremath{#4}};
\node[scale=1] (fb) at (2.2,2.3) {\ensuremath{#1}};
\node[scale=1] (fc) at (3.8,2.3) {\ensuremath{#2}};
\node[scale=1] (fd) at (3,1.55) {};
\draw (3.2,1.4) node{\reflectbox{\scriptsize\ensuremath{#3}}};
}
\def\threesimplexedges #1;#2,#3,#4,#5,#6,#7 {
\draw[#1] (va.east)  to[edge label={\scriptsize\ensuremath{#2}}] (vb.south); %f
\draw[#1] (vb.south)  to[edge label={\scriptsize\ensuremath{#3}}] (vc.west); %g
\draw[#1] (vc.west)  to[edge label={\scriptsize\ensuremath{#4}}] (vd); %h
\draw[#1] (va.east)  to[swap,edge label={\scriptsize\ensuremath{#6}}] (vc.west); %fg
\draw[#1] (vb.south)  to[edge label={\scriptsize\ensuremath{#5}}] (vd); %gh
\draw[#1] (va.east)  to[swap,edge label={\scriptsize\ensuremath{#7}}] (vd); %fgh
}
\def\threesimplexsmoothedges #1;#2,#3,#4,#5,#6,#7 {
\ddraw (va.east)  to[edge label={\scriptsize\ensuremath{#2}}] (vb.south); %f
\ddraw (vb.south)  to[edge label={\scriptsize\ensuremath{#3}}] (vc.west); %g
\ddraw (vc.west)  to[edge label={\scriptsize\ensuremath{#4}}] (vd.center); %h
\ddraw (va.east)  to[swap,edge label={\scriptsize\ensuremath{#6}}] (vc.west); %fg
\ddraw (vb.south)  to[edge label={\scriptsize\ensuremath{#5}}] (vd.center); %gh
\ddraw (va.east)  to[swap,edge label={\scriptsize\ensuremath{#7}}] (vd.center); %fgh
}
\def\Gsimplexsmoothedges #1;#2,#3,#4,#5,#6,#7 {
\draw[postaction={decorate}] (va.east)  to[edge label={\scriptsize\ensuremath{#2}}] (vb.south); %f
\draw[postaction={decorate}]  (vb.south)  to[edge label={\scriptsize\ensuremath{#3}}] (vc.west); %g
\draw[postaction={decorate},gray]  (vc.west)  to[edge label={\scriptsize\ensuremath{#4}}] (vd.center); %h
\draw[postaction={decorate}]  (va.east)  to[swap,edge label={\scriptsize\ensuremath{#6}}] (vc.west); %fg
\draw[postaction={decorate},gray]  (vb.south)  to[edge label={\scriptsize\ensuremath{#5}}] (vd.center); %gh
\draw[postaction={decorate},gray]  (va.east)  to[swap,edge label={\scriptsize\ensuremath{#7}}] (vd.center); %fgh
}
\newcommandx{\threesimplex}[4][1={---},2={ , , , , , },3={ , , , },4={line width=\simptexthickness,rounded corners,decoration={markings,mark=at position 0.46 with {\arrowreversed{Stealth[length=6pt,width=6pt]}}}}]{
%\tikzsetnextfilename{threesimplex#1#2#3}
\begin{tikzpicture}[#4]
\threesimplexvertices#1 - - - - 
\threesimplexfaces#3 , , , , 
\Gsimplexsmoothedges#4;#2 , , , , , , 
\end{tikzpicture}
}
\newcommandx{\Gsimplex}[4][1={---},2={ , , , , , },3={ , , , },4={line width=\simptexthickness,rounded corners,decoration={markings,mark=at position 0.46 with {\arrowreversed{Stealth[length=6pt,width=6pt]}}}}]{
%\tikzsetnextfilename{threesimplex#1#2#3}
\begin{tikzpicture}[#4]
\threesimplexvertices#1 - - - - 
\threesimplexsmoothedges#4;#2 , , , , , , 
\Gsimplexfaces#3 , , , , 
\end{tikzpicture}
}
\newcommandx{\internalthreesimplex}[4][1={---},2={ , , , , , },3={ , , , },4={line width=\simptexthickness,rounded corners,decoration={markings,mark=at position 0.46 with {\arrowreversed{Stealth[length=6pt,width=6pt]}}}}]{
\begin{scope}[#4]
\threesimplexvertices#1 - - - - 
\threesimplexfaces#3 , , , , 
\Gsimplexsmoothedges#4;#2 , , , , , , 
\end{scope}
}
\newcommandx{\internalGsimplex}[4][1={---},2={ , , , , , },3={ , , , },4={line width=\simptexthickness,rounded corners,decoration={markings,mark=at position 0.46 with {\arrowreversed{Stealth[length=6pt,width=6pt]}}}}]{
\begin{scope}[#4]
\threesimplexvertices#1 - - - - 
\threesimplexsmoothedges#4;#2 , , , , , , 
\Gsimplexfaces#3 , , , , 
\end{scope}
}
\DeclareRobustCommand\simpY{\@ifnextchar^{\simpYup}{\@ifnextchar_{\simpYdown}{\Ysimplex}}}
\def\simpYup^#1#2_#3 {\Ysimplex[#1,#2,#3]}
\def\simpYdown_#1^#2#3 {\simpYup^#2#3_#1 }
\def\Ysimplexvertices#1,#2,#3,{
\node[above,scale=0.65] (i) at (0,1) {\ensuremath{#1}};
\node[above,scale=0.65] (j) at (0.5,1) {\ensuremath{#2}};
\node[below,scale=0.65] (k) at (0.25,0) {\ensuremath{#3}};}
\newcommandx{\Ysimplex}[2][1={ , , },2={}]{
%\tikzsetnextfilename{Ysimplex#1-#2}
\begin{tikzpicture}[scale=0.65]
\Ysimplexvertices#1, , , 
\node[right,scale=0.7] (morph) at (0.25,0.5) {\ensuremath{#2}};%[draw,minimum width=1pt,minimum height=1pt,thick] {\ensuremath{f}};
\draw (i) to (0.25,0.5);
\draw (j) to (0.25,0.5) to (k);
\end{tikzpicture}
}
\DeclareRobustCommand\simpA{\@ifnextchar^{\simpAup}{\@ifnextchar_{\simpAdown}{\Asimplex}}}
\def\simpAup^#1_#2#3 {\simpAdown_#2#3^#1 }
\def\simpAdown_#1#2^#3 {\Asimplex[#1,#2,#3] }
\def\Asimplexvertices#1,#2,#3,{
\node[below,scale=0.65] (i) at (0,0) {\ensuremath{#1}};
\node[below,scale=0.65] (j) at (0.5,0) {\ensuremath{#2}};
\node[above,scale=0.65] (k) at (0.25,1) {\ensuremath{#3}};
}
\newcommandx{\Asimplex}[2][1={ , , },2={}]{
%\tikzsetnextfilename{Asimplex#1-#2}
\begin{tikzpicture}[scale=0.65]
\Asimplexvertices#1, , , 
\node[right,scale=0.7] (morph) at (0.25,0.5) {\ensuremath{#2}};%[draw,minimum width=1pt,minimum height=1pt,thick] {\ensuremath{f}};
\draw (i) to (0.25,0.5);
\draw (j) to (0.25,0.5) to (k);
\end{tikzpicture}
}
\def\arrowDefect   {stealth'}
\def\colorFm       {blue}  %% color for morphism labels in a 6j symbol
\def\colorObject   {blue!50!black}
\def\widthObject   {1.4pt}
\newcommand\rigidarrowright[1]{ \scopeArrow{0.9}{\arrowDefect}
                   \draw[\colorObject,line width=\widthObject,postaction={decorate}]
                   (#1) -- +(-0.13,0); \end{scope}}
\newcommand\basmorph[1]{\path %% filldraw[\colorObject,fill=yellow,line width=1.2pt]
		   node at (#1) [shape=rectangle,inner sep=2.8pt,draw=\colorObject,fill=yellow] {};}
\newcommand\basmorpho[1]{\path %% filldraw[\colorObject,fill=yellow,line width=1.2pt]
                   node at (#1) [shape=rectangle,inner sep=2.8pt,draw=\colorObject,fill=yellow] {};
                   \node[below=1pt] at (#1) {$\oo$};}
\newcommand\basmorphO[1]{\path %% filldraw[\colorObject,fill=yellow,line width=1.2pt]
                   node at (#1) [shape=rectangle,inner sep=2.8pt,draw=\colorObject,fill=yellow] {};
                   \node[above=1pt] at (#1) {$\oo$};}
\newcommand\scopeArrow[2] {\begin{scope}[decoration={markings,mark=at position #1
                   with \arrow{#2}}]}  % put arrowhead of shape #2 at relative position #1
\def\Bar           {\overline }
\def\barpi         {\varpi}
\def\be            {\begin{equation}}
\def\bearl         {\begin{array}{l}}
\def\bearll        {\begin{array}{ll}}
\def\bearlll       {\begin{array}{lll}}
\def\C             {{\ensuremath{\mathcal C}}}
\def\cc            {\overline }   % for objects
\def\cdo           {\,{\cdot}\,}
\def\cir           {\,{\circ}\,}
\def\coev          {{\mathrm{coev}\!}}
\def\complex       {{\ensuremath{\mathbbm C}}}
\def\cupp          {\cap}   
\def\Cupp          {{}^{\cupp\!}}
\def\CUpp          {{}^{\cupp\!\!}}
\def\D             {{\ensuremath{\mathcal D}}}
\newcommand\dd[1]  {{\delta_{#1}}}
\newcommand\dF[1]  {\mathrm d_{#1}^{\mathrm{FP}}}
\newcommand\di[1]  {{\mathrm d_{#1}}}
\def\dim           {\text{dim}}
\def\dimk          {\dim_\ko}
\def\dsty          {\displaystyle }
\def\ee            {\end{equation}}
\def\eear          {\end{array}}
\def\End           {{\ensuremath{\mathrm{End}}}}
\def\EndC          {{\ensuremath{\mathrm{End}_\C}}}
\def\eps           {\varepsilon}
\def\eq            {\,{=}\,}
\newcommand\equ[1] {\stackrel{\eqref{#1}}=}
\def\ev            {{\mathrm{ev}\!}}
\newcommand\F[3]   {\def\FpartA{#1}\def\FpartB{#2}\def\FpartC{#3}\FpartDEFGHIJ}
\def\FF            {{\mathrm F}}
\newcommand\FFm[6] {\FF^{(#1\, #2\, #3)\, #4}_{#5,#6}}
\newcommand\Fm[6]  {\mathbb F^{(#1\, #2\, #3)\, #4}_{#5,#6}}
\newcommand\FM[4]  {\FF^{({#1}\,{#2}\,{#3})\,{#4}}_{}}
\newcommand\Fo[1]  {\F{#1}{\Bar{#1}}{#1}{#1}\OO\OO{{}}{{}}{{}}{{}}}
\newcommand\FO[1]  {\F{#1}{{#1}}{#1}{#1}\OO\OO{{}}{{}}{{}}{{}}}
\newcommand\Fob[1] {\F{\Bar{#1}}{#1}{\Bar{#1}}{\Bar{#1}}\OO\OO{{}}{{}}{{}}{{}}}
\newcommand\Foo[1] {\F{#1}{\Bar{#1}}{#1}{#1}\OO\OO\oo\oo\oo\oo}
\newcommand\FpartDEFGHIJ[7]
\def\fs            {\nu}      % Frobenius–Schur indicator
\def\FS            {\mathcal V}
\def\Fta           {\FF^{(\ta\,\ta\,\ta)\,\ta}}
\def\Fte           {\FF^{(\ta\,\ta\,\ta)\,\OO}}
\def\Ftot          {\mathbb F}
\newcommand\Fx[1]  {\FF^{#1}_{}}
\newcommand\G[3]   {\def\GpartA{#1}\def\GpartB{#2}\def\GpartC{#3}\GpartDEFGHIJ}
\def\GG            {{\mathrm G}}
\newcommand\GGm[6] {\GG^{(#1\, #2\, #3)\, #4}_{#5,#6}}
\newcommand\Gm[6]  {\mathbb G^{(#1\, #2\, #3)\, #4}_{#5,#6}}
\newcommand\GM[4]  {\GG^{({#1}\,{#2}\,{#3})\,{#4}}_{}}
\newcommand\Go[1]  {\G{#1}{\Bar{#1}}{#1}{#1}\OO\OO{{}}{{}}{{}}{{}}}
\newcommand\Gob[1] {\G{\Bar{#1}}{#1}{\Bar{#1}}{\Bar{#1}}\OO\OO{{}}{{}}{{}}{{}}}
\newcommand\Goo[1] {\G{#1}{\Bar{#1}}{#1}{#1}\OO\OO\oo\oo\oo\oo}
\newcommand\GpartDEFGHIJ[7]
\def\Gr            {{\ensuremath{\mathrm{Gr}}}}
\def\gusion        {veined near-fu\-sion{} }
\newcommand\Hd[3]  {{\HH^{#1,#2}_{\phantom{#1,}#3}}}
\def\HF            {{\mathrm H}}
\newcommand\HFm[6] {\HF^{(#1\, #2\, #3)\, #4}_{#5,#6}}
\def\HG            {\widetilde{\mathrm H}}
\newcommand\HGm[6] {\HG^{(#1\, #2\, #3)\, #4}_{#5,#6}}
\def\HH            {{\mathrm H}}
\newcommand\Hm[3]  {{\HH_{#1,#2}^{\phantom{#1,}#3}}}
\def\Hom           {{\ensuremath{\mathrm{Hom}}}}
\def\HomC          {{\ensuremath{\mathrm{Hom}_\C}}}
\def\HomD          {{\ensuremath{\mathrm{Hom}_\D}}}
\def\HtotF         {\HH^{(4)}}
\def\I             {{S}}
\def\ib            {{\Bar i}}
\def\id            {\text{\sl id}}
\def\Id            {\text{\sl Id}}
\def\iN            {\,{\in}\,}
\def\inv           {^{-1}}
\def\jb            {{\Bar j}}
\def\kb            {{\Bar k}}
\def\ko            {{\ensuremath{\Bbbk}}}
\def\lb            {{\Bar l}}
\def\le            {\,{\leq}\,}
\def\lhs           {left hand side}
\def\LL            {{\breve L}}
\def\MM            {{\mathrm M}}
\newcommand\N[3]   {N_{#1 #2}^{\phantom{#1#2}\!\!#3}}
\def\ndgusion      {veined fu\-sion{} }
\def\NDgusion      {\textit{veined} fu\-sion{} }
\newcommand\nxl[1] {\\[#1mm]}
\newcommand\Nxl[1] {\\[-1.3em]\\[#1mm]}
\def\Obj           {\mathrm{Obj}}
\def\one           {{\bf1}}
\def\oo            {{\ssm\circ}}   % label for basis of 1-dim morphism space
\def\OO            {1}             % label in I for monoidal unit
\newcommand\oob[1] {{}^{\Bar{#1}}{\vee}{}^{#1}}   %  basis of 1-dim space H^{\Bar i,i}_\OO
\newcommand\oobb[1]{{}^{#1}{\vee}{}^{\Bar{#1}}}
\newcommand\oocap[1] {{}_{\Bar{#1}}{\cap}{}_{#1}^{}}   %  basis of 1-dim space H_{\Bar i,i}^\OO
\newcommand\ooCap[1] {{}_{\Bar{#1}}{\cap}{}_{#1}}  
\newcommand\oocapp[1]{{}^{}_{#1}{\cap}{}_{\Bar{#1}}}
\newcommand\ooCapp[1]{{}_{#1}{\cap} {}_{\Bar{#1}}}
\newcommand\oocup[1] {{}^{\Bar{#1}}{\cup}{}^{#1}}   %  basis of 1-dim space H^{\Bar i,i}_\OO
\newcommand\oocupp[1]{{}^{#1}{\cup}{}^{\Bar{#1}}}
\newcommand\ood[1] {{}_{\Bar{#1}}{\wedge}{}_{#1}^{}}   %  basis of 1-dim space H_{\Bar i,i}^\OO
\newcommand\ooD[1] {{}_{\Bar{#1}}{\wedge}{}_{#1}}
\newcommand\oodd[1]{{}^{}_{#1}{\wedge}{}_{\Bar{#1}}}
\newcommand\ooDD[1]{{}_{#1}{\wedge}{}_{\Bar{#1}}}
\def\opp           {^\text{op}}
\def\oti           {\,{\otimes}\,}
\def\otik          {\,{\otimes_\ko}\,}
\def\pb            {{\Bar p}}
\def\piv           {\pi}             % pivotal structure
\def\qb            {{\Bar q}}
\def\qquand        {\qquad{\rm and}\qquad}
\def\RR            {{\breve R}}
\def\RSC           {R\S\C}
\def\S             {\I}
\def\SC            {\S\C}
\def\sjs           {$6j$-sym\-bol}
\def\sss           {\scriptscriptstyle}
\newcommand\ssm[1] {\sss \color{\colorFm} #1}
\def\SSS           {\ensuremath{\mathfrak S_3}}
\def\SSSS          {\ensuremath{\mathfrak S_4}}
\newcommand\sumI[1]{\sum_{#1\in\I}}
\newcommand\sumN[4]{\sum_{#1=1}^{\N{#2}{#3}{#4}}}
\def\ta            {x}
\def\To            {\,{\xrightarrow{~}}\,}
\def\tr            {\text{tr}}
\def\Tr            {\text{tr}}
\def\vapi          {\sigma} 
\def\Vect          {\text{vect}}
\def\Vectk         {\ensuremath{\Vect_\ko}}
\def\wee           {\wedge}
\def\Wee           {{}^{\wee\!}}
\def\WEe           {{}^{\wee\!\!}_{}}
\theoremstyle{definition}
\newtheorem{thm}{Theorem}[section]
\newtheorem{conv}[thm]{Convention}
\newtheorem{cor}[thm]{Corollary}
\newtheorem{defi}[thm]{Definition}
\newtheorem{exa}[thm]{Example}
\newtheorem{lem}[thm]{Lemma}
\newtheorem{prop}[thm]{Proposition}
\newtheorem{rem}[thm]{Remark}
\definecolor{DarkViolet} {rgb}{0.580392,0.000000,0.827450}
\definecolor{ForestGreen}{rgb}{0.133333,0.545098,0.133333}
\newcommand{\gray}[1]{{\color{gray}{#1}}}
\newcommand_[1]{\ensuremath{\sb{{#1}}}}
\newcommand^[1]{\ensuremath{\sp{{#1}}}}
\begin{document}

\newcommand\Cite[2] {\cite[#1]{#2}}
\numberwithin{equation}{section}
\numberwithin{thm}{section}
                                     
%%%%%%%%%%%%%%%%%%%%%%%%%%%%%%%%%%%%%%%%%%%%%%%%%%%%%%%%%%%%%%%%%%%%

~\vskip 3.3em

\begin{center}
{\bf \Large Tetrahedral symmetry of 6\boldmath{$j$}-symbols in fusion categories}

\vskip 18mm

{\large \  \ J\"urgen Fuchs\,$^{\,a} \quad$ and $\quad$ Tobias Gr\o sfjeld\,$^{\,b}$ }

\vskip 12mm

 \it$^a$
 Teoretisk fysik, \ Karlstads Universitet\\
 Universitetsgatan 21, \ S\,--\,651\,88\, Karlstad
 \\[9pt]
 \it$^b$
 Matematiska institutionen, \ Stockholms Universitet\\
 S\,--\,106\,91\, Stockholm

\end{center}

\vskip 3.2em

\noindent{\sc Abstract}\\[3pt]
We establish tetrahedral symmetries of \sjs s for arbitrary fusion categories under minimal assumptions. As a convenient tool for our calculations we introduce the notion of a \emph{veined} fusion category, which is generated by a finite set of simple objects but is larger than its skeleton. Every fusion category \C\ contains veined fusion subcategories that are monoidally equivalent to \C\ and which suffice to compute many categorical properties for \C.
The notion of a veined fusion category does not assume the presence of a pivotal structure, and thus in particular does not assume unitarity.
We also exhibit the geometric origin of the algebraic statements for the \sjs s.

\newpage
\tableofcontents
\newpage
%%%%%%%%%%%%%%%%%%%%%%%%%%%%%%%%%%%%%%%%%%%%%%%%%%%%%%%%%%%%%%%%%%%%%%%%

\section{Introduction}    

Any monoidal category is monoidally equivalent to a strict one, i.e.\ to a monoidal category with trivial associativity and unit constraints. Crucially, under such an equivalence non-trivial information about the tensor product is preserved. Specifically, in case the monoidal category is linear and semisimple with simple monoidal unit and the tensor product of two simple objects is isomorphic to a finite direct sum of simples, the remnants of the associator are encoded in linear isomorphisms 
  \be
  \bigoplus_{p\in\S} \Hm ip{\,l} \oti \Hm jkp \,\xrightarrow{\phantom x}\,
  \bigoplus_{q\in\S} \Hm qk{\,l} \oti \Hm ijq
  \label{eq:1.1}
  \ee
between finite-dimensional vector spaces. Here $\S$ labels the isomorphism classes of simple objects and, with a choice of representatives $(X_i)_{i\in\S}$ in these classes, the vector spaces are the morphism spaces $\Hm ijk \,{=}\, \Hom(X_i\oti X_j,X_k)$ for $i,j,k \,{\in}\, \S$.
 
The \emph{\sjs s}, also called \emph{fusing matrices} \cite{mose3} or \emph{F-matrices} \cite{rosW2}, of such a monoidal category \C\ are the the matrix blocks of the linear isomorphisms \eqref{eq:1.1} with respect to a choice of bases in all spaces $\Hm ijk$ \Cite{Ch.\,4.9}{EGno}. The term \sjs\ is also used for the entries of those matrices. 
\sjs s are of direct relevance in various applications, ranging from recoupling theory in quantum mechanics \cite{FAra} and rational conformal field theory \cite{mose3,fefk3} to state-sum invariants of three-manifolds \cite{tuvi,dujn,bawe2} and the classification of semisimple monoidal categories with prescribed Grothendieck ring \cite{rosW2}.

The numerical values of the \sjs s depend, in general, on the choice of basis, albeit certain specific \sjs s, or combinations of \sjs s, are basis independent. \sjs s can be geometrically interpreted in terms of labeled tetrahedra, see e.g.\ Section VII.1.2 of \cite{TUra}. It is therefore natural to ask whether, or in what sense, the numerical values of \sjs s enjoy the symmetry \SSSS\ of a tetrahedron. Such tetrahedral symmetries have already been discussed a lot in the literature, for instance in the context of state-sum constructions and their Hamiltonian realization in quantum spin systems, see e.g.\ \cite{bawe2,leWe,haWo}.

However, in existing treatments typically restrictions on the class of categories considered are imposed which, while motivated by specific applications the authors have in mind, are not necessary. For instance, the treatment in \cite{bawe2} is in the setting of spherical fusion categories, while \cite{TUra} is in the setting of unimodal modular tensor categories. In applications in condensed matter physics one typically deals with unitary fusion categories (see e.g.\ \cite{kitaA3,szbv}), which in addition are sometimes required to be braided or to be multiplicity-free (i.e.\ $\dim(\Hm ijk) \,{\le}\, 1$).
 
Indeed, to the best of our knowledge, the tetrahedral symmetry of \sjs s has so far not been studied in the what would seem the most natural approach, namely the minimal setting in which the associator can be characterized through the isomorphisms \eqref{eq:1.1}. The purpose of the present paper is to fill this gap. More precisely, our approach is as follows. First of all we impose the indispensable conditions that the monoidal category \C\ in question is \ko-linear, with \ko\ a commutative ring, that it is semisimple and has simple monoidal unit, and that the spaces $\Hm ijk$ are free \ko-modules of finite rank. Further, in order to be able to sensibly manipulate \sjs s, we require that the Grothendieck ring of \C\ has a structure of a \emph{unital based ring} in the sense of Definition 3.1.3 of \cite{EGno}. Finally, we restrict our attention to the situation that \ko\ is actually an algebraically closed field of characteristic zero, and that the number $|\S|$ of isomorphism classes of simple objects is finite. Neither of these latter two restrictions (which are also present in the treatments cited above) are strictly necessary. But they allow us to invoke various pertinent results from the literature, in particular from the theory of fusion categories \cite{EGno}.

Imposing these (almost) minimal requirements leads us to a class of monoidal categories which we call \emph{\gusion}categories (see Definition \ref{def:ourfusion}) and generically denote by \D. A \gusion category is appreciably larger than its skeleton, but its isoclasses of objects are just small enough such that we can address the isomorphisms \eqref{eq:1.1} for a definite set $\S$ of simple objects in terms of a generating skeleton of the category, without any further isomorphisms to be accounted for.
It is worth noting that in this setting we do \emph{not} have to presume that \D\ is rigid. Rather, the role of rigidity is largely taken over by covector duality of finite-dimensional \ko-vector spaces. Provided that a certain non-degeneracy condition is satisfied -- namely, that the specific \sjs s $\Fo i$ are invertible for all $i \,{\in}\, \S$ -- right and left rigidity functors $(-)^\cupp$ and $\Cupp(-)$ for \D\ can be \emph{constructed} from the input data of a \gusion category.
Thereby \D\ naturally acquires the structure of a fusion category. We refer to the resulting class of fusion categories as \emph{\ndgusion}categories, see Definition \ref{def:ndgusion}. 
The so-obtained rigidity functors $(-)^\cupp$ and $\Cupp(-)$ automatically coincide on objects, and the double dual functors $(-)^{\cupp\cupp}$ and ${}^{\cupp\cupp\!}(-)$ are the identity on objects.

\medskip

This paper contains one basic result: Given a \ndgusion category \D\ such that the double dual $(-)^{\cupp\cupp}$ is the identity as a functor, there is a distinguished action of the symmetric group \SSSS\ on the vector space
  \be
  \bigoplus_{i,j,k,p,q,l \in \S} \! \big(
  \Hm jkp \oti \Hd qkl \oti \Hm ipl \oti \Hd ijq \,\oplus\,
  \Hd jkp \oti \Hm qkl \oti \Hd ipl \oti \Hm ijq \big)
  \label{eq:nodef:H4}
  \ee
which results in non-trivial identities for the \sjs s of \D, and thereby also for any fusion category \C\ that is monoidally equivalent to \D. In Definition \ref{def:tauij} this \SSSS-action is introduced in terms of generators. That these indeed obey the relations required for obtaining \SSSS\ is established in Proposition \ref{prop:genuineS4N}. 
 
A geometric interpretation of the so-obtained \SSSS-action on the space \eqref{eq:nodef:H4} in terms of labeled tetrahedra is given in Section \ref{sec:simplicialSSSS}. Those labeled tetrahedra can, in turn, be identified with \sjs s (the precise expressions are given in Equation \eqref{eq:tetrahedra4FandG}). Hereby the \SSSS-action translates to the desired tetrahedral symmetries of \sjs s. Similarly as in \cite{dujn,fgsv}, the resulting relations are obtained as an invariance property of a function $\Ftot$ from the space \eqref{eq:nodef:H4} to the ground field whose values on basis elements are given by rescaled \sjs s, as described in Definition \ref{def:Ftot}. Necessary and sufficient conditions for $\Ftot$ to be \SSSS-invariant are formulated in Theorem \ref{thm:sym} for \ndgusion categories and in Corollary \ref{cor:sym} for general fusion categories.

In the multiplicity-free case, in the literature these tetrahedral symmetries are frequently expressed as equalities between the numerical values of (rescaled) \sjs s. Such equalities are, however, directly valid only under additional conditions. Indeed, according to Definition \ref{def:tauij} the \SSSS-transformations involve a specific \SSS-action on the basic morphism spaces $\Hm ijk$ and their duals, and generically this \SSS-action is non-trivial even when these spaces are one-di\-men\-si\-onal. We specify this \SSS-action in terms of generators in Definitions \ref{def:LR-0} and \ref{def:LL,RR}. That they satisfy the \SSS-relations, under the same conditions as in Proposition \ref{prop:genuineS4N}, is seen in Proposition \ref{prop:genuineS3}.

We stress that unitarity of the fusion category does not play any direct role in our considerations -- after all, the ground field \ko\ can be any algebraically closed field of characteristic zero. Still, unitarity is of interest because, as follows from Lemma \ref{lem:pseudo-etc}(ii), the conditions of Corollary \ref{cor:sym} are fulfilled for every pseudo-unitary \complex-linear fusion category with its canonical spherical structure.
It is also worth pointing out that pivotality of the category is \emph{not} sufficient for the invariance property of $\Ftot$ to hold, compare Lemma \ref{lem:barpi}. On the other hand, even when the $\Ftot$ is not \SSSS-invariant -- and thus in particular, even in the absence of a pivotal structure -- according to the explicit formulas recorded in Proposition \ref{prop:tauij-explN} there are still non-trivial relations among \sjs\ that are connected by tetrahedral transformations. As compared to the case that $\Ftot$ is invariant, they contain additional sign factors. These sign factors have been studied, in a somewhat different setting and for $\ko \eq \complex$, in \cite{bart6}; they are obtained as eigenvalues of certain involutive matrices that are constructed from special \sjs s, see Equations \eqref{eq:involmatrix} and \eqref{eq:MMdiag}.

 \bigskip

To arrive at the results described above, we introduce a few technical tools and, along the way, discuss further aspects of \ndgusion categories, some of which may be of independent interest.
We organize our discussion as follows. We start in Section \ref{sec:Prel} by specifying our setting, including in particular our conventions for \sjs s, the definition of a \ndgusion category and pertinent aspects of covector duality. In Section \ref{sec:semantics4gusion} we summarize the graphical calculus using simplices that we use in our discussion of morphisms and \sjs s. We then explain, in Section \ref{sec:covector2rigid}, the construction of distinguished left and right rigidity functors $(-)^\cupp$ and $\Cupp(-)$ for a \ndgusion category. These allow us in particular to introduce, in Section \ref{sec:dim}, dimension functions. The definition of the functors $(-)^\cupp$ and $\Cupp(-)$ involves a choice of an invertible number $\mu_i \iN \ko$ for each $i \iN \S$; this freedom can be fixed in such a way (see Equation \eqref{eq:mu-choice}) that the left and right dimension functions coincide, but for the sake of generality we refrain from imposing this choice.  

The dimension functions do not rely on the existence of a pivotal structure. Their relation with pivotal dimensions, as well as other aspects of the double dual $(-)^{\cupp\cupp}$ and pivotality, are the subject of Section \ref{sec:piv}. As a final preparation for the discussion of tetrahedral symmetries, in Section \ref{sec:decon} we introduce the maps $\LL$ and $\RR$ which are the generators of the \SSS-action alluded to above. We refer to them as partial duals, owing to the fact that they can be composed so as to yield the rigidity duals of morphisms, as described in Lemma \ref{lem:cupp-va-LLRR}. Finally, Section \ref{sec:SSSS} provides our results on tetrahedral symmetry, as already described above, while in Appendix \ref{sec:app6j} and \ref{sec:app-A} we collect pertinent information about \sjs s and string diagrams, and about structures that underlying our use of simplicial diagrams, respectively.

%%%%%%%%%%%%%%%%%%%%%%%%%%%%%%%%%%%%%%%%%%%%%%%%%%%%%%%%%%%%%%%%%%%%

\section{Preliminaries} \label{sec:Prel}

Let \ko\ be a commutative ring.

\begin{defi}\label{def:ourfusion}
A \textit{\gusion category} is a \ko-linear monoidal category \D\ with a finite set $\S \eq \S_\D$ of pairwise non-isomorphic simple objects such that the following holds:
 \def\leftmargini{2.1em}~\\[-1.65em]\begin{itemize}\addtolength\itemsep{-7pt}
\item[(i)]
The monoidal unit object $1$ is in $S$.
\item[(ii)]
For any pair $i,j \iN \S$, $\HomD(i,j)$ is a free \ko-module of rank $\delta_{i,j}$.
\item[(iii)]
For every $i \iN \S$ there exists a unique $\Bar{i} \iN \S$ such that for any $j \iN \S$, $\HomD(j\oti \Bar i,1)$ and $\HomD(\Bar i\oti j,1)$ are free \ko-modules of rank $\delta_{i,j}$.
\item[(iv)]
Every object is a finite direct sum of tensor products (with arbitrary bracketing) of finitely many objects in $\S$.
\item[(v)]
Every object is isomorphic to a direct sum of objects in $\S$.
\end{itemize}
\end{defi}

This notion of \gusion category makes sense for any commutative ring \ko. However, for the purposes of the present paper we restrict (as is commonly done, e.g.\ in most of Chapter 9 of \cite{EGno})
to the case that \ko\ is an algebraically closed field of characteristic zero. In this case the imposed conditions are redundant, in particular it follows directly from simplicity of $i$ that $\HomD(i,i) \,{\cong}\, \ko$. Still we give the definition as it stands, in order to cover the general case.

\begin{rem} \label{rem:gusion}
(i) By the uniqueness condition on the object $\Bar i$ one has $\Bar{\Bar i} \eq i$ for every $i \iN \S$.
\\[3pt]
(ii) As we take \ko\ to be an algebraically closed field of characteristic zero, an object $x$ is simple iff it is absolutely simple, i.e.\ iff $\HomC(x,x) \,{\cong}\, \ko$. This will no longer be true if one replaces \ko-li\-ne\-a\-rity by super-\ko-linearity, which allows one to consider superfusion categories (as is e.g.\ done in \cite{ushe,bghnprw}). The latter have recently become of interest in the context of fermionic topological orders in condensed matter physics (see e.g.\ \cite{aalW,waGu2}).
\\[3pt]
(iii) Finiteness of the set $\S$ guarantees that summations appearing in our constructions are finite. But provided that the latter summations are still finite, some statements (like the definition of \sjs s) remain valid for infinite $\S$, and thus e.g.\ for representation categories of finite-dimensional complex simple Lie groups.
\end{rem} 

    \bigskip

To explain our choice of terminology, we need to tell what the relevant non-degeneracy condition is. To this end we first recall the concept of \sjs s. Let \C\ be a monoidal category that is monoidally equivalent to a \gusion category. We denote by $\otimes$ the tensor product of \C\ and by $a$ its associativity constraint. For simplicity, and without loss of generality, we take the monoidal unit 1 to be strict.
In the sequel we also abbreviate
  \be
  \HomC(i\oti j,k) =: \Hm ijk \qquad\text{and} \qquad \HomC(k,i\oti j) =: \Hd ijk 
  \label{eq:def:Hm}
  \ee
for every triple $(i,j,k) \,{\in}\, \S^{\times 3}$.
Since \C\ is semisimple (by property (v) in Definition \ref{def:ourfusion}),
the associativity constraint $a$ amounts to a collection of linear isomorphisms $\FM ijkl$ such that the diagram
  \be
  \begin{tikzcd}[ampersand replacement=\&,column sep=5.2em,row sep=2.1em]
  \bigoplus_{p\in\S} \Hm ip{\,l} \otik \Hm jkp 
  \ar{d}[swap]{\circ} \ar{r}{\FM ijkl}[swap]{\cong}
  \& \bigoplus_{q\in\S} \Hm qk{\,l} \otik \Hm ijq \ar[d,"\circ"]
  \\
  \HomC(i\otimes (j \otimes k),l) \ar{r}{\HomC(a_{i,j,k},l)~}[swap]{\cong}
  \& \HomC((i\otimes j) \otimes k,l)
  \end{tikzcd}  
  \label{eq:HHtoHH}
  \ee
commutes for $i,j,k,l \iN \S$.

\begin{defi} \label{def:sjs}
Let \C\ be monoidally equivalent to a \gusion category. The \emph{\sjs s} of \C\ are the matrix blocks of the isomorphisms $\FM ijkl$ with respect to a chosen basis.
\end{defi}

Thus concretely, denoting by $\N ijk \,{:=}\, \dim_\ko(\Hm ijk) \eq \dim_\ko(\Hd ijk)$ the (finite) dimension of the space $\Hm ijk$, and choosing a framing set of basis vectors for each of these spaces, the \sjs s are represented by the collection
  \be
  \big\{ \F ijklpq\alpha\beta\gamma\delta \big\}
  \label{eq:Fnumbers}
  \ee
of numbers that are defined by
  \be
  \alpha \circ (\id_{i} \oti \beta)  \circ a_{i,j,k}
  = \sumI q \sumN \gamma ijq \sumN \delta qkl
  \F ijklpq\alpha\beta\gamma\delta \, \delta \circ (\gamma \oti \id_{k})
  \label{eq:ab=F.cd-}
  \ee
for each quadruple of basis elements
  \be
  \alpha \iN \Hm ip{\!l} \,,\quad \beta \iN \Hm jkp \,,\quad
  \gamma \iN \Hm ijq \,,\quad \delta \iN \Hm qkl \,.
  \ee
Note that for ease of notation here we do not distinguish between a basis morphism $\alpha$ and its `numbering' which is an integer in the range $\{1,2,...\,,\N ijk\}$. Also, by a common slight abuse of terminology, the term \sjs\ will be used both for the matrix blocks of the associativity constraint and for the collection \eqref{eq:Fnumbers} of matrix elements with respect to a given choice of bases of the spaces $\Hm ijk$.

\begin{rem}\label{rem:sjs-conventions}
The convention for the labeling of \sjs s chosen here is taken from \Cite{Eq.\,(2.36)}{fuRs4}. In the literature various other conventions are in use. For instance, the notation in \Cite{Ch.\,4.9}{EGno} is related to ours by $\big( \Phi^l_{ijk} \big)_{pq} \eq \GGm ijklpq$, where $\mathrm G$ denotes the matrix inverse of $\mathrm F$, and the relation with the notation in \cite{ushe} is $\big( F^{ijp,\alpha\beta}_{klq,\gamma\delta} \big)_{\!\text{Usher}} \eq \G ijklpq\alpha\beta\gamma\delta$.
\end{rem}

In the particular case of \sjs s for which $k \eq l \eq i$, $j \eq \Bar i$ and $p \eq q \eq 1$, each of the morphism labels $\alpha,\beta,\gamma,\delta$ can take only a single value. We then often drop those labels from the definition and just write $\Fo i$ for such a \sjs.

\begin{defi}\label{def:ndgusion}
A \textit{\ndgusion category} is a \gusion category \D\ for which the numbers $\Fo i \iN \ko$ are invertible for all $i \,{\in}\, \S_\D$.
\end{defi}

\begin{rem} \label{rem:whyndgusion}
Our terminology is motivated by the fact, to be formulated in Proposition \ref{prop:gusion-fusion}, that any \ndgusion category can be endowed with left and right rigidity structures, whereby it acquires the structure of a fusion category. The qualification \emph{veined} is chosen to indicate that the isomorphism classes of objects are much smaller than for a generic fusion category, but still considerably larger than for a skeletal fusion category.
Conversely, for any \ndgusion category \D\ there is a fusion category \C\ such that \D\ is a full and dense \ko-linear monoidal subcategory of \C. Indeed, given a fusion category \C, select any set $\S$ of representatives for the isoclasses of simple objects of \C\ such that $1 \iN \S$ and take $\D \eq \SC$ to be the category generated by the objects in $\S$ by taking formal finite direct sums and tensor products, see Definition \ref{def:SCetc}(ii).
The involution $i \,{\mapsto}\, \Bar i$ on the set $\S$ of simple objects of the category $\SC$ is induced by the rigidity of \C. 
\end{rem}

For discussing various aspects of \sjs s, it is convenient to make use of the graphical description of morphisms by string diagrams.\,%
 \footnote{~When using string diagrams it is common to replace the monoidal category under consideration by an equivalent strict monoidal category. In our context this is fully justified because the \sjs s retain the essential information about the associativity constraint when replacing a monoidal category by a monoidally equivalent one. Note, however, that as long as one is only dealing with equalities between morphisms, it is not really necessary to work with a strict monoidal category, see e.g.\ Section 2.1 of \cite{bart6},}
This is done in Appendix \ref{sec:app6j}, in which we collect pertinent information about \sjs s. Specifically, the defining formula \eqref{eq:ab=F.cd-} is graphically represented in Equation \eqref{eq:pic:ab=Fcd}.

The numerical values of the \sjs s depend on the arbitrary basis choices for the spaces $\Hm ijk$ or, in other words, on the reference frame or \textit{gauge}. It is important to keep track of the impact that this gauge choice may have in our investigations. Specifically, one can readily determine the effects of arbitrary transformations of the frame upon the \sjs s. For instance, whether or not the numbers $\Fo i$ are invertible is a gauge-independent statement.

  \medskip

Let now \D\ be a \ndgusion category. By semisimplicity of, given any basis $\{ \alpha \} \,{\subset}\, \Hm ijk$ there is a unique \textit{dual basis} $\{ \cc\alpha \} \,{\subset}\, \Hd ijk$ such that
  \be
  \alpha' \circ \cc\alpha = \delta_{\alpha,\alpha'}^{}\, \id_{k} \,.
  \label{eq:dualbasis}
  \ee
Dual bases satisfy the completeness relation
  \be
  \id_{i} \otimes \id_{j} = \sumI k \sumN\alpha ijk\, \cc\alpha \circ \alpha \,.
  \label{eq:dominance}
  \ee
Note that the chosen bases generate all morphisms of \D\ via sums, products and covector dualities. In particular, in the morphism spaces $\HomD(x,i)$ for $i \iN S$ and any $x \iN \D$ we can choose bases that consist of combinations (via tensor product and composition) of the elements of the chosen bases of the spaces $\Hm jkl$. Further, we can then choose coherently dual bases in the spaces $\HomD(i,x)$ by applying the linear \textit{covector duality map} that maps $\Hm jkl \,{\ni}\, \alpha \,{\mapsto}\, \Bar{\alpha} \,{\in}\, \Hd jkl$ to all constituents of each of the basis vectors in the chosen bases of the spaces $\Hm jkl$. From here on we assume that such basis choices have been made. This way the covector duality map extends to $\HomD(x,i) \,{\ni}\, f \,{\mapsto}\, \Bar{f} \,{\in}\, \HomD(i,x)$ for all $i \iN S$ and $x \iN \D$. We can then in particular endow each of these spaces with a non-degenerate \ko-bilinear form $\langle-,-\rangle$ by setting $\langle f,g \rangle \,{:=} f \cir \Bar{g} \iN \HomD(i,i) \,{\cong}\, \ko$ for $f,g \iN \HomD(x,i)$, and similarly for $\HomD(i,x)$.

It is worth noting that the covector duality map does not behave as well under arbitrary gauge transformations as the \sjs s do -- had we chosen $\xi\alpha$ instead of $\alpha$ as a basis vector, for $\xi \iN \ko^\times$, the resulting covector duality map would map $\xi\alpha$ to $\xi^{-1}\Bar{\alpha}$ rather than $\xi\Bar{\alpha}$ and so is a different map. To avoid any complications arising from this feature, from here on
we only admit gauge transformations that preserve the covector duality maps and, as a consequence, preserve the inner products $\langle-,-\rangle$.

%%%%%%%%%%%%%%%%%%%%%%%%%%%%%%%%%%%%%%%%%%%%%%%%%%%%%%%%%%%%%%%%%%%%

\section{Duality in \ndgusion categories}

\subsection{Simplicial semantics for \gusion categories} \label{sec:semantics4gusion}

One of the tools in our study of tetrahedral symmetries of fusion categories will be to manipulate tetrahedra that represent associators. To render such manipulations computationally palatable, we need to know precisely how manipulating a simplex diagram can correspond to an algebraic operation. This is achieved by adopting the simplicial formalism that is outlined in Appendix \ref{sec:app-A} and by working with a \gusion category \D\ in the sense of Definition \ref{def:ourfusion}, so that in particular the class of simple objects is the finite set $\S \eq \S_\D$. For instance, for $\D \eq \SC$ the full and dense linear monoidal subcategory of a fusion category \C\ generated by a choice of representatives for the simple objects, as considered in Remark \ref{rem:whyndgusion}, the simplicial diagrams we draw exist in the single-object quasi-category $Q\SC$ constructed by enriching over $\SC$ (see Definition \ref{def:SCetc}(iii)).
 
The diagrammatic reasoning allows us to make statements that are independent of the choice of basis while at the same time they are easily translated to concrete (possibly basis-dependent) formulas once any specific choice of bases is made. In the sequel we work with a fixed \gusion category \D\ and simplify the notation by just writing $\Hom$ without a subscript for the morphism spaces of \D. 

 \medskip

We take 2-simplices to indicate both specific morphisms and \textit{spaces} of morphisms. This is achieved by using labeled and unlabeled faces for individual morphisms and for morphism spaces, respectively. Specifically, the statement that the morphisms $f$ and $g$ lie in the spaces $\Hom(i\oti j,k)$ and $\Hom(k,i\oti j)$, respectively, is expressed graphically as
  \begin{align}
  \qquad
  \raisebox{-1.6em}{\simptex{
  \ddraw (0,0) to["$k$"] (2,0);
  \ddraw (0,0) to[swap,"$i$"] (1,-1.4);
  \ddraw (1,-1.4) to[swap,"$j$"] (2,0);
  \node at (1,-0.5) {$f$};
  } }
  ~~\in~~
  \raisebox{-1.6em}{\simptex{
  \ddraw (3,0) to["$k$"] (5,0);
  \ddraw (3,0) to[swap,"$i$"] (4,-1.4);
  \ddraw (4,-1.4) to[swap,"$j$"] (5,0);
  } }
  ~=~ \Hom(i\oti j,k) \hspace*{0.52em}
  \\[3pt]
  \text{and} \qquad
  \raisebox{-2.8em}{\simptex{ \internaltwosimplex[][i,j,k][g] } }
  \in
  \raisebox{-2.8em}{\twosimplex[][i,j,k]}
  =~ \Hom(k,i\oti j) \,.
  \end{align}
This convention has the advantage that we can treat the composition of morphisms and of morphism spaces uniformly. On the other hand, as we will only be concerned with the 2-mor\-phisms derived from the associator, the tetrahedra we consider here do not have their \textit{volumes} labeled.

Let us display two examples: First, the composition of $g \,{\in}\, \Hom(i\oti j,p)$ and $f \,{\in}\, \Hom(p,k\oti l)$ is depicted as 
  \begin{align}
  f\circ g ~=
  \raisebox{-3.1em}{\simptex{
  \internaltwosimplex[][k,l,p][f]
  \ddraw (1,-1.4) to node[right,yshift=-2pt]{$j$} (2,0);
  \ddraw (0,0) to node[left,yshift=-4pt]{$i$} (1,-1.4);
  \draw (1.07,-0.7) node{$g$};
  } }
  \in	  
  \raisebox{-3.1em}{\simptex{
  \internaltwosimplex[][k,l,p]
  \ddraw (1,-1.4) to node[right,yshift=-2pt]{$j$} (2,0);
  \ddraw (0,0) to node[left,yshift=-4pt]{$i$} (1,-1.4);
  } }
  \subseteq
  \raisebox{-3.1em}{\simptex{
  \ddraw (0,0) to node[left,yshift=3pt]{$k$} (1,1.4);
  \ddraw (1,1.4) to node[right,yshift=5pt]{$l$}  (2,0);
  \ddraw (1,-1.4) to node[right,yshift=-2pt]{$j$} (2,0);
  \ddraw (0,0) to node[left,yshift=-4pt]{$i$} (1,-1.4);
  } }
  =~ \Hom(i \oti j,k \oti l) \,,
  \end{align}
Second, the isomorphism between the tensor products of morphism spaces that is furnished by the associativity constraint reads
  \begin{align}
  \raisebox{-2.9em}{\simptex{
  \ddraw (0.2,0) to["\ensuremath{l}"] (1.9,0);
  \ddraw (0.2,0) to["\ensuremath{i}" swap] (0.2,-1.4);
  \ddraw (0.2,-1.4) to (1.9,0);
  \ddraw (1.9,-1.4) to["\ensuremath{k}" swap] (1.9,0);
  \ddraw (0.2,-1.4) to["$j$" swap] (1.9,-1.4);
  \draw (1.2,-0.4) node{$p$};
  } }
  ~~ =
  \raisebox{-3.2em}{\simptex[scale=0.5]{\internalthreesimplex[][i,j,k,p,q,l]
  } }
  \hspace*{-0.8em} \circ
  \raisebox{-2.9em}{\simptex{
  \draw (6.6,-0.81) node{$q$};
  \ddraw (5.3,0) to["\ensuremath{i}" swap] (5.3,-1.4);
  \ddraw (5.3,-1.4) to["\ensuremath{j}" swap] (7,-1.4);
  \ddraw (5.3,0) to (7,-1.4);
  \ddraw (5.3,0) to["\ensuremath{l}"] (7,0);
  \ddraw (7,-1.4) to["\ensuremath{k}" swap] (7,0);
  } }
  \label{eq:UnframedF}
  \end{align}

For computational purposes it is often desirable to translate such a picture to coefficient matrices. To account for this we will work with \textit{framed} values of the simplices, for which faces are labeled by framing basis vectors. 
When drawing such framed simplices we employ the following conventions:
  \def\leftmargini{1.05em}~\\[-1.72em]\begin{itemize}
  \addtolength\itemsep{-7pt}
\item[--]  
We use one and the same symbol for a basis vector $\alpha$ and for its dual $\Bar\alpha$, leaving the orientation of the diagram to witness the domain and codomain of the morphism. 
\item[--]  
For the chosen basis of any one-dimensional morphism space we often use the special symbol ``$\oo$'' (compare e.g.\ formula \eqref{eq:FoGo} in Appendix \ref{sec:app6j}), and for each simple object $i \,{\in}\, \S$ we take $\oo \iN \Hom(i,i) \,{\cong}\, \ko$ to be the identity morphism $\id_i$. 
\item[--]  
We omit trivial faces or edges and multiplication symbols when they are already evident from context. 
\item[--]  
Sometimes we abbreviate $\alpha p\beta \equiv \alpha \cir (\id_i \oti \beta)$ when denoting the basis of $\Hom(i\oti (j \oti k),l)$ that is obtained from the basis of $\Hom(i \oti p,l) \otik \Hom(j \oti k,p)$.
\end{itemize}
It is also worth pointing out that we typically suppress summation symbols when working with unframed diagrams. For instance, whilst merely labeling all faces of the picture\eqref{eq:UnframedF} would not result in a sensible equality, it does become sensible once one in addition sums over the index set $\S$ for the edge with label $q$ as well as over the corresponding framings on its adjacent faces. 

Let us provide a few simple examples of framed diagrams. First, the completeness relation \eqref{eq:dominance} for the framings is expressed as
  \begin{align}         
  \sum_{k\in\S}\sum_\alpha \Bar{\alpha} \circ \alpha ~~=~~ \sum_{k,\alpha} \!
  \raisebox{-2.9em}{\begin{tikzpicture}[scale=0.9,line width=\simptexthickness,rounded corners,decoration={markings,mark=at position 0.46 with {\arrowreversed{Stealth[length=6pt,width=6pt]}}}]
  \internaltwosimplex[][i,j,\scriptsize{k}][\alpha]
  \ddraw (1,-1.4) to["\ensuremath{j}" swap] (2,0);
  \ddraw (0,0) to["$i$" swap] (1,-1.4);
  \draw (1,-0.7) node{$\alpha$};
  \end{tikzpicture}
  }
  = \quad
  \raisebox{-2.9em}{\begin{tikzpicture}[scale=0.9,line width=\simptexthickness,rounded corners,decoration={markings,mark=at position 0.46 with {\arrowreversed{Stealth[length=6pt,width=6pt]}}}]
  \ddraw (4,1.4) to["\ensuremath{j}"] (5,0);
  \ddraw (4,-1.4) to["\ensuremath{j}" swap] (5,0);
  \ddraw (3,0) to["$i$" swap] (4,-1.4);
  \ddraw (3,0) to["$i$"] (4,1.4);
  \ddraw (4,-1.4) to["$1$"] (4,1.4);
  \draw (3.5,0) node{$\oo$};
  \draw (4.5,0) node{$\oo$};
  \end{tikzpicture}
  }
  \label{eq:sum-alphabar-alpha}
  \end{align}
Second, the defining relation \eqref{eq:ab=F.cd-} satisfied by the \sjs s $\FF$ can be written as
  \begin{align} 
  \alpha \circ (\id_i \otimes \beta) ~=~
  \raisebox{-2.8em}{\simptex{
  \ddraw (0.2,0) to["\ensuremath{l}"] (1.9,0);
  \ddraw (0.2,0) to["\ensuremath{i}" swap] (0.2,-1.4);
  \ddraw (0.2,-1.4) to (1.9,0);
  \ddraw (1.9,-1.4) to["\ensuremath{k}" swap] (1.9,0);
  \ddraw (0.2,-1.4) to["$j$" swap] (1.9,-1.4);
  \draw (0.65,-0.5) node{$\alpha$};
  \draw (1.2,-0.4) node{$\scriptsize p$};
  \draw (1.5,-0.9) node{$\beta$};
  } }
  \!=~ \sum_{q,\gamma,\delta} \F ijklpq \alpha\beta\gamma\delta~
  \raisebox{-2.8em}{\simptex{
  \draw (5.7,-0.9) node{$\gamma$};
  \draw (6.6,-0.4) node{$\delta$};
  \draw (6.7,-0.95) node{$\scriptsize q$};
  \ddraw (5.3,0) to["\ensuremath{i}" swap] (5.3,-1.4);
  \ddraw (5.3,-1.4) to["\ensuremath{j}" swap] (7,-1.4);
  \ddraw (5.3,0) to (7,-1.4);
  \ddraw (5.3,0) to["\ensuremath{l}"] (7,0);
  \ddraw (7,-1.4) to["\ensuremath{k}" swap] (7,0);
  } }
  \label{eq:defF-simplicial}
  \end{align} 
while the one for the inverse symbols $\GG$ can analogously be expressed as
  \begin{align}
  \alpha \circ (\beta \otimes \id_i) ~=~
  \raisebox{-2.9em}{\simptex{
  \ddraw (0.2,0) to["\ensuremath{l}"] (1.9,0);
  \ddraw (0.2,0) to["\ensuremath{i}" swap] (0.2,-1.4);
  \ddraw (0.2,0) to (1.9,-1.4);
  \ddraw (1.9,-1.4) to["\ensuremath{k}" swap] (1.9,0);
  \ddraw (0.2,-1.4) to["$j$" swap] (1.9,-1.4);
  \draw (0.65,-0.9) node{$\alpha$};
  \draw (1.65,-0.95) node{$\scriptsize p$};
  \draw (1.5,-0.4) node{$\beta$};
  } }
  \!\! =~ \sum_{q,\gamma,\delta} \G ijklpq \alpha\beta\gamma\delta\,
  \raisebox{-2.9em}{\simptex{
  \draw (5.7,-0.5) node{$\gamma$};
  \draw (6.6,-0.9) node{$\delta$};
  \draw (6.3,-0.4) node{$\scriptsize q$};
  \ddraw (5.3,0) to["\ensuremath{i}" swap] (5.3,-1.4);
  \ddraw (5.3,-1.4) to["\ensuremath{j}" swap] (7,-1.4);
  \ddraw (5.3,-1.4) to (7,0);
  \ddraw (5.3,0) to["\ensuremath{l}"] (7,0);
  \ddraw (7,-1.4) to["\ensuremath{k}" swap] (7,0);
  } }
  \label{eq:defG-simplicial}
  \end{align}
Finally, note that we can interpret the equality \eqref{eq:defF-simplicial} as a framed version of the associativity constraint \eqref{eq:UnframedF}. Accordingly, the \sjs s can be thought of as the matrix coefficients of the 2-morphism that is realized by the tetrahedron in \eqref{eq:UnframedF}, and analogously for the inverse \sjs s. This amounts to the identifications\,%
 \footnote{~Put differently, the proper way to read the labeled tetrahedra in \eqref{eq:tetrahedra4FandG} is as the matrix coefficients of the abstract tetrahedra with spine $ijk$. Note that these can either be interpreted as a passive basis transformation of spaces or as an active transformation of framed spaces. Computationally, we can glue the in-faces of a tetrahedron to the appropriate input and use the completeness relation to erase them from a tetrahedron, leaving the out-faces as output, as in Equation $\eqref{eq:UnframedF}$. Alternatively, we can think of the tetrahedron as expanding the out-faces in terms of the in-faces.}
  \begin{align} 
  \raisebox{-4.0em}{
  \begin{tikzpicture}[scale=0.6,line width=\simptexthickness,rounded corners,decoration={markings,mark=at position 0.46 with {\arrowreversed{Stealth[length=6pt,width=6pt]}}}]
  \internalthreesimplex[][i,j,k,p,q,l][\gray{\alpha},\gray{\beta},\gamma,\gray{\delta}]
  \end{tikzpicture} }
  \hspace*{-0.6em} =~~ \F ijklpq \alpha\beta\gamma\delta \qquad \text{and} ~~
  \raisebox{-4.0em}{
  \begin{tikzpicture}[scale=0.6,line width=\simptexthickness,rounded corners,decoration={markings,mark=at position 0.46 with {\arrowreversed{Stealth[length=6pt,width=6pt]}}}]
  \internalGsimplex[][i,j,k,p,q,l][\alpha,\beta,\gray{\gamma},\delta]
  \end{tikzpicture} }
  \hspace*{-0.6em} =~~ \G ijklqp \gamma\delta\alpha\beta \,.
  \label{eq:tetrahedra4FandG}
  \end{align} 
This is the simplicial counterpart of the string diagram expressions \eqref{eq:F=o.abcd.o,} and \eqref{eq:G=o.abcd.o,} for the \sjs s.
Note that these coefficient values of $\FF$ and $\GG$ are independent of whether one uses basis or dual basis labels for the faces, since dualizing such a label at the same time reverses the orientation of the respective face.

%%%%%%%%%%%%%%%%%%%%%%%%%%%%%%%%%%%%%%%%%%%%%%%%%%%%%%%%%%%%%%%%%%%%

\subsection{From covector duality to rigidity} \label{sec:covector2rigid}

The considerations in Section \ref{sec:semantics4gusion} apply to arbitrary \gusion categories. We now study issues for which we will have to restrict to \emph{non-degenerate} ones, i.e.\ to \ndgusion categories.

Let us introduce a special notation for the pair of dual basis vectors in the one-dimensional morphism spaces $\Hom(1,\Bar{i}\oti i)$ and $\Hom(\Bar{i}\oti i, 1)$ for $i \,{\in}\, \S$:
  \begin{align}
  \oob i \in \Hom(1,\Bar{i}\oti i) \qquad \text{and} \qquad \ood i \in \Hom(\Bar{i}\oti i,1) \,.
  \label{eq:notation:oob-ood}
  \end{align}
Besides the covector duality relation $\ood i \,{\circ}\, \oob i \,{=}\, 1$ these morphisms satisfy
  \begin{align}
  & (\oodd i \oti \id_i) \circ (\id_i \oti \oob i) = \Go i\, \id_i
  \\[4pt]
  \text{and} \quad & (\id_i \oti \ood i) \circ (\oobb i \oti \id_i) = \Fo i\, \id_i \,,
  \end{align}
with $\Go i$ the inverse \sjs\ analogous to $\Fo i$ (compare \eqref{eq:FoGo}), which according to \eqref{eq:pic:FkkkkGkkkk} satisfies $\Go i \eq \Fob i$.
In particular, in the special case that all coefficients $\Fo i$ are equal to 1, we simply have
  \begin{align}
  (\oodd i \oti \id_i) \circ (\id_i \oti \oob i) = \id_i
  = (\id_ \oti \ood i) \circ (\oobb i \oti \id_i) \,.
  \end{align}
When considered for both $i$ and $\ib$, these equalities can be recognized as both the right and left rigidity constraints, or \textit{snake identities}, for the simple object $i$, with the left and right dual object of $i$ being $\Bar i$. Moreover, we can actually extend this structure to left and right rigidity structures $\Wee(-)$ and $(-)^\wee$ on the category \D: On objects we define $(i \oti j)^\wee \,{:=}\, j^\wee \oti i^\wee \,{\equiv}\, \Bar j \oti \Bar i$ as well as $\Wee(i \oti j) \,{:=}\, \Bar j \oti \Bar i$, and analogously for multiple tensor products and for direct sums; thus the left and right duals of any object $x \iN \D$ coincide, and accordingly we just write both of them as $\Bar x$. We then set
  \begin{align}
  \ood{i\otimes j} := \ooD{j} \circ (\id_{\Bar j}\oti \ood i \oti \id_j)
  \label{eq:ood(i.j)}
  \end{align}
etc., and for any pair of composable morphisms $\!\!\begin{tikzcd} x \ar[r,"f"] & y \ar[r,"g"] & z\end{tikzcd}\!\!$ we set
  \begin{align}
  f^\wee &:= (\ooD y \oti \id_{\Bar{x}}) \circ (\id_{\Bar{y}} \oti f \oti \id_{\Bar{x}}) \circ (\id_{\Bar{y}} \oti \oobb x)
  \\[4pt]
  \text{and} \qquad  \Wee f &:= (\id_{\Bar{x}} \oti \ooDD y) \circ (\id_{\Bar{x}} \oti f \oti \id_{\Bar{y}}) \circ (\oob x \oti \id_{\Bar{y}}) \,.
  \label{eq:defdualmorphs-F=1}
  \end{align}
It is then immediately checked that the snake identities are fulfilled for every object $x$, as well as $(g{\circ}f)^\wee \,{=}\, f^\wee \,{\circ}\, g^\wee$ and $\Wee(g{\circ}f) \,{=}\, \WEe f \,{\circ}\, \Wee g$.
Also note that the double duals act on objects as the identity, $x^{\wee\wee} \eq x \eq {}^{\wee\wee\!}x$.

The assumption that all $\Fo i$ are equal to 1 is a strong restriction on the category \C, though. If this requirement is not satisfied, then the associator term breaks functoriality and hence also rigidity. Indeed, the definitions \eqref{eq:defdualmorphs-F=1} immediately lead to $\id^\wee_i \,{=}\, \Go i\,\id_{\Bar i}$, and e.g.\ for the basis element $\alpha p\beta$ on the \lhs\ of \eqref{eq:defF-simplicial} they give (using also \eqref{eq:pic:FkkkkGkkkk}) $(\id_i \oti \beta)^\wee \,{\circ}\, \alpha^\wee \,{=}\, \Fo p\, \Fo i\, (\alpha \,{\circ}\, (\id_i\oti \beta))^\wee$.

However, in case the \gusion category \D\ is non-degenerate, i.e.\ is a \ndgusion category in the sense of Definition \ref{def:ndgusion}, then owing to the invertibility of the numbers $\Fo i \iN \ko$ this problem can be resolved by rescaling: We start with

\begin{defi}\label{def:cupcap}
Let \D\ be a \ndgusion category with generating set $\S$, and let $\oobb i$ and $\oodd i$ be the basis morphisms \eqref{eq:notation:oob-ood}. We set
  \begin{align}
  \oocup i := \mu_i\, \oob i  
  \qquad \text{and} \qquad \oocapp i := \frac1{\mu_i\, \Fob i}\, {\oodd i}
  \label{eq:def:cup}
  \end{align}
for $i \,{\in}\, \S$, with arbitrary invertible scalars $\mu_i$. 
\end{defi}

These definitions can be extended to all objects of \D\ by the analogous prescription as in \eqref{eq:ood(i.j)}. We then use these rescaled morphisms to define candidate rigidity functors on morphisms as
  \begin{align}
  & f^\cupp := (\ooCap y \oti \id_{\Bar{x}}^{}) \circ (\id_{\bar{y}} \oti f \oti \id_{\Bar{x}}) \circ (\id_{\Bar{y}} \oti \oocupp x)
  \nonumber
  \\[3pt]
  \text{and} \qquad & \CUpp f := (\id_{\Bar{x}}^{} \oti \ooCapp y) \circ (\id_{\Bar{x}} \oti f \oti \id_{\Bar{y}}) \circ (\oocup x \oti \id_{\bar{y}})
  \label{eq:def:fcupp}
  \end{align}
for $f \,{\in}\, \Hom(x,y)$. We then immediately satisfy the snake identities for all objects and have $(g{\circ}f)^\cupp \,{=}\, f^\cupp \,{\circ}\, g^\cupp$ and $\Cupp (g{\circ}f) \,{=}\, \Cupp f \,{\circ}\, \Cupp g$ as well as $(\id_x)^\cupp \,{=}\, \id_{\Bar x} \,{=}\, \Cupp(\id_x)$ for all $x \,{\in}\, \D$.
Also note that 
  \be
  (\Cupp f)^\cupp = f \,{=}\, \Cupp(f^\cupp)
  \label{ufu=f}
  \ee
on the nose. We can also extend the rescaling factors to all objects of \D, by defining iteratively $\Fx{x\otimes y} \,{:=}\, \Fx x\,\Fx y$ with $\Fx i \,{\equiv}\, \Fo i$ and $\Fx{x\oplus y} \,{:=}\, \Fx x \,{\oplus}\, \Fx y$ (as diagonal matrices). We then have
  \begin{align}
  f^\cupp = {(\Fx y)}^{-1} f^\wee \qquad \text{and} \qquad 
  \CUpp f = {(\Fx {\Bar{y}})}^{-1}\, \WEe f 
  \end{align}
for all morphisms $f \,{\in}\, \Hom(x,y)$.

We summarize these observations as 

\begin{prop}\label{prop:gusion-fusion}
Let \D\ be a \ndgusion category. Then the functors $(-)^\cupp$ and $\Cupp(-)$ from \D\ to $\D\opp$ defined on objects by $x^\cupp \,{:=}\, \Bar x \,{=:}\, \Cupp x$ and on morphisms by \eqref{eq:def:fcupp} constitute left and right rigidities on \D, respectively. This endows \D\ with the structure of a fusion category.
\end{prop}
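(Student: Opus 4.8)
The plan is to read this Proposition as a summary of the facts assembled in the preceding discussion and to spell out how they combine. The only genuine point of content is essentially one claim: for every object $x\iN\D$ the object $\Bar x$ is at the same time a left dual and a right dual of $x$, with (co)evaluations built from the rescaled morphisms $\oocup x$ and $\oocapp x$ of Definition \ref{def:cupcap} and their $\Bar x$-companions, and the contravariant functors these duals induce on morphisms agree with the formulas \eqref{eq:def:fcupp}. Granting this, \D\ is rigid, and the passage from ``rigid monoidal'' to ``fusion category'' is just a matter of collecting the defining properties of a \ndgusion category.

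First I would check the zigzag (snake) identities for the simple objects $i\iN\S$. Here the spaces $\Hom(1,\Bar i\oti i)$, $\Hom(\Bar i\oti i,1)$, $\Hom(1,i\oti\Bar i)$, $\Hom(i\oti\Bar i,1)$ are one-dimensional and $\oob i,\ood i$ (resp.\ $\oobb i,\oodd i$) are dual bases; the ``almost-zigzag'' relations recorded just before Definition \ref{def:cupcap}, namely $(\oodd i\oti\id_i)\cir(\id_i\oti\oob i)\eq\Go i\,\id_i$ and $(\id_i\oti\ood i)\cir(\oobb i\oti\id_i)\eq\Fo i\,\id_i$ --- the scalars being produced precisely by the nontrivial associator on the relevant triple products --- together with their $\Bar i$-companions and the identity $\Go i\eq\Fob i$, show that each of the four zigzag composites needed to make $\Bar i$ a left and a right dual of $i$ equals the relevant identity morphism times $\Fo i$ or $\Go i$. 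The rescaling of Definition \ref{def:cupcap}, multiplying $\oob i$ by $\mu_i$ and $\oodd i$ by $1/(\mu_i\Fob i)$, is tailored exactly so that in each composite the factor $\mu_i$ cancels and the remaining $\Fo i$ or $\Go i\eq\Fob i$ cancels the scalar on the right-hand side, for any choice of the invertible $\mu_i$. Thus $(\Bar i,\oocapp i,\oocup i)$ and its $\Bar i$-companion exhibit $\Bar i$ as both a left and a right dual of $i$ on the nose.

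Next I would extend this to arbitrary objects. By properties (iv) and (v) of Definition \ref{def:ourfusion} every object of \D\ is a finite direct sum of arbitrarily bracketed tensor products of objects of $\S$, so it is enough to propagate the dual together with its (co)evaluations along direct sums --- componentwise, which visibly respects the zigzags --- and along tensor products by the nested prescription used for $\ood{i\oti j}$ in \eqref{eq:ood(i.j)}, setting $\Bar{x\oti y}\eq\Bar y\oti\Bar x$ and inserting the associativity constraints of \D\ where needed so that the (co)evaluations are well-typed. The zigzag identity for $x\oti y$ then follows from those for $x$ and for $y$ by the usual manoeuvre of sliding the inner snake through the outer one --- an equality of morphisms, hence unproblematic despite non-strictness, the associators being merely reshuffled --- provided the scalar prefactors again cancel; this is guaranteed by the multiplicativity $\Fx{x\oti y}\eq\Fx x\,\Fx y$ of the rescaling factors (extended to all objects just before the Proposition). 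This inductive step, with its bookkeeping of associators and of the factors $\Fx x$, is the part that requires real care, and I expect it to be the main obstacle.

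Finally I would assemble the conclusion. With left and right duals $x^\cupp\eq\Bar x\eq\Cupp x$ available for every object, \D\ is rigid; the left and right dual functors canonically determined by these duality data are, on unwinding \eqref{eq:def:fcupp} and using the relations $f^\cupp\eq(\Fx y)^{-1}f^\wee$ and $\CUpp f\eq(\Fx{\Bar y})^{-1}\WEe f$, exactly $(-)^\cupp$ and $\Cupp(-)$. That these are functorial, with $(g\cir f)^\cupp\eq f^\cupp\cir g^\cupp$, $\Cupp(g\cir f)\eq\Cupp f\cir\Cupp g$, $(\id_x)^\cupp\eq\id_{\Bar x}\eq\Cupp(\id_x)$ and $(\Cupp f)^\cupp\eq f\eq\Cupp(f^\cupp)$ as in \eqref{ufu=f}, is a formal consequence of the zigzag identities, requiring no further computation with \sjs s. Since \ko\ is an algebraically closed field of characteristic zero, \D\ is \ko-linear and, by property (v), semisimple (hence abelian), with finite-dimensional morphism spaces by property (ii) together with semisimplicity, with finitely many isomorphism classes of simple objects by finiteness of $\S$, and with simple monoidal unit by property (i) together with $\End_\D(1)\Cong\ko$. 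A rigid monoidal category of this kind is a fusion category by definition (see \cite{EGno}), which is the assertion of the Proposition.
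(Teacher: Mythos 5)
Your proposal is correct and follows essentially the same route as the paper, which presents this Proposition explicitly as a summary of the immediately preceding construction: the rescaling in Definition \ref{def:cupcap} is designed precisely so that the factors $\mu_i$ cancel and $\Fo i$, $\Go i\eq\Fob i$ absorb the associator scalars in the snake identities, after which the extension to tensor products and direct sums and the functoriality statements are exactly as you describe. (The only cosmetic remark: once the rescaled (co)evaluations satisfy the snake identities on the nose for the constituents, the nested construction for $x\oti y$ satisfies them automatically, so the multiplicativity $\Fx{x\otimes y}\eq\Fx x\,\Fx y$ is not actually needed at that step — it enters only in relating $(-)^\cupp$ to $(-)^\wee$.)
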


Recall that in any monoidal category the (left or right) dual of an object, if it exists, is unique up to unique isomorphism \Cite{Prop.\,2.10.5}{EGno}. In a \ndgusion category, the freedom given by these isomorphisms for all objects reduces to automorphisms of the simple objects in $\S$. The invertible numbers $\mu_i$ introduced in \ref{def:cupcap} precisely account for this freedom.
Also, we immediately have (compare also \Cite{Prop.\,5.3.13}{BAki})

\begin{cor}
A \gusion category is rigid, and is thus a fusion category, if and only if it is non-degenerate.
\end{cor}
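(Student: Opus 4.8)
The plan is to prove the two implications separately. One of them is essentially free: ``non-degenerate $\Rightarrow$ rigid, and hence a fusion category'' is precisely Proposition~\ref{prop:gusion-fusion}, which constructs explicit left and right rigidity functors out of the rescaled morphisms of Definition~\ref{def:cupcap} and observes that the remaining ingredients of a fusion category (\ko-linearity, semisimplicity, finiteness of $\S$, simplicity of the monoidal unit) are already built into Definition~\ref{def:ourfusion}. So the substance lies in the converse: assuming that a \gusion category \D\ is rigid, I want to show that $\Fo i \iN \ko$ is invertible for every $i \iN \S$.

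Fix $i \iN \S$ and let $i^\vee$ be a left dual of $i$, with evaluation $\ev\colon i^\vee \oti i \to 1$ and coevaluation $\coev\colon 1 \to i \oti i^\vee$ obeying the snake identities; in particular $(\id_i \oti \ev) \circ (\coev \oti \id_i) = \id_i$. The first step is to pin down $i^\vee$ as an object. Since \D\ is semisimple and $i^\vee$ is a finite direct sum of objects of $\S$, one has $i^\vee \Cong \bigoplus_{k\in\S} \Hom(k,i^\vee) \otik k$, and the rigidity adjunction identifies $\Hom(k,i^\vee) \Cong \Hom(k\oti i,1)$, which by property~(iii) of Definition~\ref{def:ourfusion} is free of rank $\delta_{k,\Bar i}$. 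Hence $i^\vee \Cong \Bar i$. Transporting $\ev$ and $\coev$ along a chosen isomorphism $i^\vee \Cong \Bar i$ then yields morphisms $\ev'\colon \Bar i\oti i \to 1$ and $\coev'\colon 1 \to i\oti\Bar i$ that still obey the snake identities.

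Next I would use that the spaces $\Hom(\Bar i\oti i,1)$ and $\Hom(1,i\oti\Bar i)$ are one-dimensional (the first by property~(iii), the second because its dimension equals that of $\Hom(i\oti\Bar i,1)$), spanned respectively by $\ood i$ and $\oobb i$ in the notation of \eqref{eq:notation:oob-ood}. Thus $\ev' = \lambda\, \ood i$ and $\coev' = \nu\, \oobb i$ for some $\lambda,\nu \iN \ko$, and inserting this into $(\id_i \oti \ev') \circ (\coev' \oti \id_i) = \id_i$ while invoking the relation $(\id_i \oti \ood i) \circ (\oobb i \oti \id_i) = \Fo i\, \id_i$ from Section~\ref{sec:covector2rigid} gives $\lambda\nu\, \Fo i\, \id_i = \id_i$ in $\Hom(i,i) \Cong \ko$, i.e.\ $\lambda\nu\, \Fo i \eq 1$. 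Hence $\Fo i$ is invertible (with inverse $\lambda\nu$), and since $i$ was arbitrary \D\ is a \ndgusion category; being rigid it is then a fusion category, which finishes the proof.

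I expect the only genuinely delicate point to be the identification $i^\vee \Cong \Bar i$ and the routine-but-finicky transport of the duality data along the chosen isomorphism; everything after that is bookkeeping in one-dimensional morphism spaces. As a variant one could instead work with right duals, using the companion relation $(\oodd i \oti \id_i)\circ(\id_i \oti \oob i) = \Go i\, \id_i$ together with the identity $\Go i \eq \Fob i$; this shows that all $\Fob i$ are invertible, hence, as $i$ ranges over $\S$, that all $\Fo i$ are.
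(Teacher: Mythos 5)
Your proof is correct and follows the route the paper intends: the ``if'' direction is exactly Proposition \ref{prop:gusion-fusion}, and the ``only if'' direction (which the paper leaves as immediate) is correctly reduced, via $i^\vee\Cong\Bar i$ and the one-dimensionality of $\Hom(\Bar i\oti i,1)$ and $\Hom(1,i\oti\Bar i)$, to the snake identity forcing $\lambda\nu\,\Fo i\eq 1$.
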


%%%%%%%%%%%%%%%%%%%%%%%%%%%%%%%%%%%%%%%%%%%%%%%%%%%%%%%%%%%%%%%%%%%%

\subsection{Traces and dimensions} \label{sec:dim}

{}From now on we assume that \D\ is a \ndgusion category, so that left and right rigidity functors as constructed above exist. It is then natural to ask whether \D\ admits a \emph{pivotal structure}, i.e.\ 
a monoidal natural transformation $\piv\colon (-)^{\cupp\cupp} \To \Id$ from the (right, say) double rigidity dual to the identity functor; this question will be studied in Section \ref{sec:piv} below. The datum of a pivotal structure is equivalent to the one of a \emph{sovereign} structure, i.e.\ a monoidal natural transformation between left and right rigidities. Note that the evaluation and coevaluation morphisms \eqref{eq:def:cup} appear in the definition \eqref{eq:def:fcupp} of both left and right rigidity. This makes it easy to handle composite morphisms that involve both rigidities, even in the absence of a pivotal structure.\,%
 \footnote{~Thus in case $\ko \eq \complex$ the left and right rigidities are automatically related according to the ``pairing convention'' that is explained in Section 3.2 of \cite{bart6}.}
In particular we can directly define two traces as follows.

\begin{defi} \label{def:traces}
For any endomorphism $f \,{\in}\, \Hom(x,x)$ in \D\ we set
  \be
  \Tr_L(f) ~:=~~
  \raisebox{-3.2em}{\begin{tikzpicture}[line width=\simptexthickness,rounded corners,decoration={markings,mark=at position 0.46 with {\arrowreversed{Stealth[length=6pt,width=6pt]}}}]
  \ddraw (-1,-1.4) to (0,0);
  \ddraw (-1,1.4) to (0,0);
  \draw (-0.5,0) node{$f$};
  \draw (0.5,0) node{$\oo$};
  \draw (0,.85) node{$\ooCapp x$};
  \draw (0,-.85) node{$\oocupp x$};
  \draw (-1,-1.4) to[dotted] (-1,1.4);
  \ddraw (0,0) to (1,1.4);
  \ddraw (0,0) to (1,-1.4);
  \draw (1,-1.4) to[dotted] (-1,-1.4);
  \draw (1,-1.4) to[dotted] (1,1.4);
  \draw (-1,1.4) to[dotted] (1,1.4);
  \end{tikzpicture}
  }
  \qquad~ \text{and} \qquad \Tr_R(f) ~:=~~
  \raisebox{-3.2em}{\begin{tikzpicture}[line width=\simptexthickness,rounded corners,decoration={markings,mark=at position 0.46 with {\arrowreversed{Stealth[length=6pt,width=6pt]}}}]
  \ddraw (-1,-1.4) to (0,0);
  \ddraw (-1,1.4) to (0,0);
  \draw (0.5,0) node{$f$};
  \draw (-0.5,0) node{$\oo$};
  \draw (0,.85) node{$\ooCap x$};
  \draw (0,-.85) node{$\oocup x$};
  \draw (-1,-1.4) to[dotted] (-1,1.4);
  \ddraw (0,0) to (1,1.4);
  \ddraw (0,0) to (1,-1.4);
  \draw (1,-1.4) to[dotted] (-1,-1.4);
  \draw (1,-1.4) to[dotted] (1,1.4);
  \draw (-1,1.4) to[dotted] (1,1.4);
  \end{tikzpicture}
  } \quad
  \ee
The numbers $\Tr_L(f)$ and $\Tr_R(f)$ are called the \textit{left trace} and \textit{right trace}, respectively, of the endomorphism $f$.
\end{defi}

In order that the traces $\Tr_L(f)$ and $\Tr_R(f)$ become cyclic, a pivotal structure on \D\ is needed. But note that owing to the fact that the double dual functors ${}^{\cupp\cupp\!}(-)$ and $(-)^{\cupp\cupp}$ act trivially on objects, the traces are indeed defined on \textit{endo}morphisms. As a consequence, even without assuming a pivotal structure 
we can introduce dimension functions by considering the traces of identity morphisms:

\begin{defi}
The left and right \textit{dimensions} of an object $x \,{\in}\, \D$ are the numbers
  \be
  \dim_L(x) := \Tr_L(\id_x) \qquad\text{and}\qquad \dim_R(x) := \Tr_R(\id_x) \,,
  \label{eq:def:dimLR}
  \ee
respectively.
\end{defi}

\begin{rem}
Since the traces do not involve a pivotal structure, they are not cyclic on the nose, but rather satisfy 
  \be
  \Tr_L(f \cir g) = \Tr_L(g \cir {}^{\cupp\cupp\!}f) \qquand 
  \Tr_R(f \cir g) = \Tr_R(g \cir f^{\cupp\cupp})
  \label{eq:not-so-cyclic}
  \ee
(compare \Cite{Prop.\,4.7.3(4)}{EGno}).
In contrast to the traces, for a general fusion category dimensions can only be defined in the pivotal case \Cite{Def.\,4.7.11}{EGno}. But since $i^{\cupp\cupp} \eq i$, for a \ndgusion category \D\ we can still define non-pivotal dimensions as in \eqref{eq:def:dimLR}. In case \D\ does have a pivotal structure $\piv$, these must be distinguished from the `pivotal' dimensions 
  \be 
  \dim_L^\piv(i) := \Tr_L(\pi_i^{-1}) 
  \qquand
  \dim_R^\piv(i) := \Tr_R(\pi_i^{})   
  \,.
  \label{eq:def:dimLRpiv}
  \ee 
The latter functions are also called `quantum dimensions' or sometimes `categorical dimensions', which hides their dependence on the pivotal structure.
\end{rem}
		 
Using \eqref{eq:def:cup} we directly get
  \begin{align}
  \dim_L(i) = \frac{\mu_{\Bar{i}}}{\mu_{i}\, \Fob i} \qquad \text{and}\qquad
  \dim_R(i) = \dim_L(\Bar i) \,.
  \label{eq:dimL,dimR}
  \end{align}
Note that by construction the dimensions of simple objects are invertible numbers,
and that $\dim_{L,R}(\OO) \eq 1$.
Also, while the left and right dimensions individually depend on the choice of the numbers $\mu_i$ (and are thus not canonical), their product does not: we have
  \be
  \dim_L(i)\,\dim_R(i) = \frac1{\Fo i\,\Fob i} \,.
  \label{eq:dimL.dimR}
  \ee
For fusion categories over \complex, such a product of left and right dimensions has first been studied in \Cite{Prop.\,2.4}{muge8}. It is also known as \textit{squared norm} \Cite{Sect.\,7.21}{EGno} or as \textit{paired dimension} \Cite{Def.\,3.2}{bart6} of the object $i$. We will use the latter term; the formula \eqref{eq:dimL.dimR} amounts to Corollary 4.7 of \cite{bart6}.
Besides being independent of the $\mu_i$, the paired dimension is also gauge-independent, and by a suitable rescaling one can achieve $\Fo i\eq\Fob i$ (see Remarks \ref{rem:gauge} and \ref{rem:rescaledFi} in Appendix \ref{sec:app6j}), whereby the paired dimension is written as a square.
We do not introduce a separate notation for the paired dimension, but it will be convenient to use a separate symbol for the quotient of the left and right dimensions of an object, which we also call the \emph{relative dimension}:
  \be
  \dd x := \dim_L(x) / \dim_R(x) \,.
  \label{eq:def:dd}
  \ee
In the sequel we also abbreviate
  \be
  \di x := \dim_L(x) \,.
  \label{def:di}
  \ee

The left and right traces are very similar. Indeed, using the relation \eqref{eq:def:cup} between covector and rigidity dualities, one checks that $\Tr_L(f)/\di i\di j \,{=}\, \Tr_R(f)/\di\ib \di\jb$ for $f \,{\in}\, \End(i\oti j,i\oti j)$, and analogously for general endomorphisms. 
In particular the relative dimensions obey
  \be
  \dd i \equiv \frac{\di i}{\di\ib} = \frac {\Fo i} {\Fob i}\, \Big(\frac{\mu_\ib}{\mu_i} \Big)^{\!2} .
  \label{eq:reldim}
  \ee
Thus if and only if the freedom present in the scalars $\mu_i$ is partially fixed in such a way that
  \be
  \frac {\mu_i^2}  {\mu_\ib^2} = \frac {\Fo i} {\Fob i} \,,
  \label{eq:mu-choice}
  \ee
then the two dimension functions as well as the two traces coincide.

%%%%%%%%%%%%%%%%%%%%%%%%%%%%%%%%%%%%%%%%%%%%%%%%%%%%%%%%%%%%%%%%%%%%

\subsection{Pivotality} \label{sec:piv}

Next we ask under which conditions the category \D\ has a pivotal structure. To this end we first compute the expansion coefficients of the double duals of basis elements $\alpha \iN \Hm ijk$ and $\cc\alpha \iN \Hd ijk$ with respect to the chosen bases. We find

\begin{lem} \label{lem:1piv}
(i) The right and left double duals of the basis morphisms in the spaces $\Hm ijk$ and $\Hd ijk$ can be expressed as
  \begin{align}
  & \alpha^{\cupp\cupp} 
  = \frac{\di i\,\di j}{\di k} \sum_\beta \MM_{\alpha,\beta}\, \beta
  \qquad \text{and} \qquad
  \cc\alpha^{\cupp\cupp} 
  = \frac{\di\ib\,\di\jb}{\di\kb} \sum_\beta \MM_{\beta,\alpha}\, \cc\beta
  \label{eq:alphabarcuppcupp}
  \end{align}
and as
  \begin{align}
  & {}^{\cupp\cupp\!}\alpha
  = \frac{\di\ib\,\di\jb}{\di\kb} \sum_\beta \MM_{\alpha,\beta}\, \beta
  \qquad \text{and} \qquad
  {}^{\cupp\cupp}\cc\alpha 
  = \frac{\di i\,\di j}{\di k} \sum_\beta \MM_{\beta,\alpha}\, \cc\beta \,, 
  \label{eq:cuppcuppalphabar}
  \end{align}
respectively, where $\MM \,{\equiv}\, \MM^{(i\,j\,k)}$ is the $\N ijk{\times}\N ijk$-matrix with entries
  \be
  \MM^{(i\,j\,k)}_{\alpha,\beta}
  := \sum_\mu \F ij\jb i\OO k \oo\oo\beta\mu\, \G ij\jb ik\OO \alpha\mu\oo\oo
  = \sum_\mu \G \ib ijj\OO k \oo\oo\mu\beta\, \F \ib ijjk\OO \mu\alpha\oo\oo \,.
  \label{eq:MM2FGversions}
  \ee
(ii) $\cc\alpha^{\cupp\cupp}$ is covector dual to $\alpha^{\cupp\cupp}$, and  ${}^{\cupp\cupp}\cc\alpha$ is covector dual to ${}^{\cupp\cupp}\alpha$.
\\[5pt]
(iii) The quadruple dual satisfies
  \be
  \alpha^{\cupp\cupp\cupp\cupp}
  = \dd i\, \dd j \, \dd\kb \; \alpha
  \label{eq:quadrupledual-id}
  \ee
with $\dd x$ the relative dimension \eqref{eq:def:dd},
and analogously for ${}^{\cupp\cupp\cupp\cupp}\alpha$. In particular, if left and right dimensions coincide, then the (right or left) quadruple dual functor is the identity as a functor.
\end{lem}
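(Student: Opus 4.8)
The plan is to prove the three parts of Lemma \ref{lem:1piv} in order, since (ii) and (iii) both build on the formulas established in (i).

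For part (i), I would compute the double dual $\alpha^{\cupp\cupp}$ directly from the definition \eqref{eq:def:fcupp} of the rigidity functor applied twice, inserting the rescaled (co)evaluations \eqref{eq:def:cup}. Concretely, $\alpha^\cupp \iN \Hom(\Bar k, \Bar j \oti \Bar i)$ is built by capping off $\alpha$ with $\oocapp{}$ and $\oocupp{}$ morphisms; applying $(-)^\cupp$ again produces a morphism in $\Hom(k, i \oti j)$ which I then expand in the dual basis $\{\cc\beta\}$, so that the expansion coefficient is $\langle \alpha^{\cupp\cupp}, \cc\beta\rangle$ paired appropriately, i.e.\ the scalar obtained by composing $\alpha^{\cupp\cupp}$ with $\beta$. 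Graphically this is a string diagram with several zig-zags; straightening it using the snake identities leaves a diagram that is precisely a composite of two associator moves — one $\Fo{}$-type and one $\Go{}$-type bending — which is exactly what the matrix $\MM^{(i\,j\,k)}$ in \eqref{eq:MM2FGversions} records. The scalar prefactor $\di i\,\di j/\di k$ arises because the rescaling factors $\mu$ and $\Fob{}$ in \eqref{eq:def:cup} combine, via \eqref{eq:dimL,dimR}, into precisely this ratio of dimensions; I would track these factors carefully. The two expressions for $\MM$ in \eqref{eq:MM2FGversions} correspond to resolving the same diagram by bending either the $i$-strand or the $j$-strand first, and equating them is itself a consequence of the pentagon-type consistency of \sjs s. The formulas for ${}^{\cupp\cupp\!}\alpha$, and for the double duals of $\cc\alpha$, are obtained by the mirror-image computation using left rigidity, which swaps the roles of $i,j,k$ with $\ib,\jb,\kb$ in the prefactor while leaving $\MM$ intact (since $\MM$ is built from \sjs s that are insensitive to the bar-involution in the relevant slots, as the second equality in \eqref{eq:MM2FGversions} makes manifest).

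For part (ii), the cleanest argument is to apply the covector duality map to the identity $\alpha' \cir \cc\alpha = \delta_{\alpha,\alpha'}\id_k$ from \eqref{eq:dualbasis}, together with the functoriality $(g\cir f)^\cupp = f^\cupp\cir g^\cupp$ and its iterate; since $(-)^{\cupp\cupp}$ is a (covariant) monoidal functor, $\cc\alpha^{\cupp\cupp}\cir$-type compositions still compute the pairing, and the claim that $\cc\alpha^{\cupp\cupp}$ is covector-dual to $\alpha^{\cupp\cupp}$ follows once one checks the normalization matches — which it does because the two prefactors $\di i\,\di j/\di k$ and $\di\ib\,\di\jb/\di\kb$ in \eqref{eq:alphabarcuppcupp} are reciprocal up to the $\MM$-matrix being the same in both, and $\MM$ turns out to be orthogonal-like with respect to the pairing. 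Alternatively, and perhaps more transparently, I would note that covector duality intertwines $(-)^{\cupp\cupp}$ on $\Hm ijk$ with $(-)^{\cupp\cupp}$ on $\Hd ijk$ — this is essentially the content of \eqref{ufu=f} combined with the fact that the rescaled (co)evaluations are themselves covector-dual pairs — so the dual-basis relation is preserved under $(-)^{\cupp\cupp}$, giving the claim immediately.

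For part (iii), I would simply iterate the formula from (i): $\alpha^{\cupp\cupp\cupp\cupp} = (\alpha^{\cupp\cupp})^{\cupp\cupp}$, and applying \eqref{eq:alphabarcuppcupp} twice gives a prefactor $\frac{\di i\,\di j}{\di k}\cdot\frac{\di i\,\di j}{\di k}$ times $\MM^2$ — except that the \emph{second} application of $(-)^{\cupp\cupp}$ acts on the space $\Hm ijk$ again (since double duals are identity on objects), so I must be careful about whether the intermediate prefactor is the one for $\Hm ijk$ or for $\Hd{}{}{}$; tracing through, the two applications contribute $\di i\,\di j/\di k$ and $\di\ib\,\di\jb/\di\kb$ respectively, whose product is $\dd i\,\dd j\,\dd\kb^{-1}\cdot(\text{stuff})$ — here I would reconcile signs/inverses with \eqref{eq:def:dd} to land on exactly $\dd i\,\dd j\,\dd\kb$. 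Crucially, $\MM^2 = \id$: this is because $\alpha \mapsto \alpha^{\cupp\cupp}$ composed with $\cc\alpha\mapsto\cc\alpha^{\cupp\cupp}$ recovers the original pairing by part (ii), forcing $\MM$ to be an involution (equivalently, $\MM$ is conjugate to an involution via the diagonal dimension matrix, hence $\MM^2=\id$ outright). Then \eqref{eq:quadrupledual-id} follows, and if $\dim_L = \dim_R$ then all $\dd x = 1$, so $\alpha^{\cupp\cupp\cupp\cupp} = \alpha$ on every basis element, hence $(-)^{\cupp\cupp\cupp\cupp} = \Id$ as a functor (using that it is already the identity on objects and is additive and monoidal).

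The main obstacle I anticipate is part (i): correctly bookkeeping the scalar $\mu_i$, $\Fob i$, and dimension factors through the two nested applications of \eqref{eq:def:fcupp}, and identifying the resulting straightened string diagram with precisely the composite of \sjs s defining $\MM^{(i\,j\,k)}$ — in particular getting the index placement and the basis-versus-dual-basis labels on every face to match \eqref{eq:MM2FGversions}, and verifying the asserted equality of the two expressions for $\MM$ (which uses a pentagon identity). Everything in (ii) and (iii) is then essentially formal once (i) and the involutivity $\MM^2 = \id$ are in hand.
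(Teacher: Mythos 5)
Your overall route through part (i) is essentially the paper's: you extract the expansion coefficients of $\alpha^{\cupp\cupp}$ in the chosen basis and identify them with the matrix $\MM^{(i\,j\,k)}$. The paper organizes this slightly differently, first writing $\alpha^{\cupp\cupp} = \frac1{\di k}\sum_\beta \Tr_L(\cc\beta\circ\alpha)\,\beta$ (and its three companions, using \eqref{eq:not-so-cyclic}) and then reading off $\Tr_L(\cc\beta\circ\alpha) = \di i\,\di j\,\MM_{\alpha,\beta}$ from the closed string diagrams \eqref{eq:MM3versions}; your direct unwinding of the two nested applications of \eqref{eq:def:fcupp} is the same computation. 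One small correction: the equality of the two expressions in \eqref{eq:MM2FGversions} is obtained in the paper as an immediate consequence of covector duality applied to \eqref{eq:MM3versions}, not from a pentagon identity. For part (ii), your second (``alternative'') argument --- apply the covariant linear functor $(-)^{\cupp\cupp}$ to the relation \eqref{eq:dualbasis}, using $(g\circ f)^{\cupp\cupp} = g^{\cupp\cupp}\circ f^{\cupp\cupp}$ and $(\id_k)^{\cupp\cupp} = \id_k$ --- is correct and is actually cleaner than the paper's proof, which instead derives the involutivity of the rescaled matrix \eqref{eq:involmatrix} from \eqref{ufu=f} and then checks the pairing by the explicit computation \eqref{eq:doubledual-vs-covector}. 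Your first argument for (ii), by contrast, presupposes that ``$\MM$ is orthogonal-like with respect to the pairing'', which is exactly the fact you later invoke part (ii) to establish in (iii); only the functoriality argument avoids this circularity, so you should commit to it.

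In part (iii) there is a concrete error in the bookkeeping that does not self-correct. First, $\MM^2$ is not the identity ``outright'': what follows from (ii) (equivalently from \eqref{ufu=f}) is $(\MM^2)_{\alpha,\beta} = \frac{\di k\,\di\kb}{\di i\,\di\ib\,\di j\,\di\jb}\,\delta_{\alpha,\beta}$, i.e.\ only the rescaled matrix \eqref{eq:involmatrix} is involutive. Second, since $(-)^{\cupp\cupp}$ maps $\Hm ijk$ to itself, both applications of \eqref{eq:alphabarcuppcupp} contribute the same prefactor $\di i\,\di j/\di k$; the factor $\di\ib\,\di\jb/\di\kb$ you assign to the second application belongs to the left double dual (or to the dual spaces $\Hd ijk$), not to the iterate of the right one. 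With the correct ingredients one gets $\bigl(\tfrac{\di i\,\di j}{\di k}\bigr)^{2}\,\tfrac{\di k\,\di\kb}{\di i\,\di\ib\,\di j\,\di\jb} = \tfrac{\di i\,\di j\,\di\kb}{\di\ib\,\di\jb\,\di k} = \dd i\,\dd j\,\dd\kb$, which is \eqref{eq:quadrupledual-id}; with your two statements as written one would instead obtain $\tfrac{\di i\,\di j\,\di\ib\,\di\jb}{\di k\,\di\kb}$, the product of paired dimensions, which is not equal to $\dd i\,\dd j\,\dd\kb$ in general. The final deduction (all $\dd x \eq 1$ forces the quadruple dual to be the identity functor) is fine once the scalar is right.
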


\begin{proof}
(i) We first note that by combining the Definitions \ref{def:cupcap} and \ref{def:traces} we have
  \begin{align}
  & \alpha^{\cupp\cupp} 
  = \frac1{\di k} \sum_\beta \Tr_L (\Bar\beta \,{\circ}\, \alpha)\, \beta
  \label{eq:alphacuppcupp2}
  \\
  \text{and} \qquad
  & \cc\alpha^{\cupp\cupp} = \frac1{\di\kb} \sum_\beta \Tr_R (\beta \,{\circ}\, \cc\alpha^{\cupp\cupp})\, \cc\beta
  \equ{eq:not-so-cyclic} \frac1{\di\kb} \sum_\beta \Tr_R (\cc\alpha \,{\circ}\, \beta)\, \cc\beta
  \label{eq:alphabarcuppcupp2}
  \end{align}
as well as
  \begin{align}
  & {}^{\cupp\cupp\!}\alpha
  = \frac1{\di\kb} \sum_\beta \Tr_R (\Bar\beta \,{\circ}\, \alpha)\, \beta
  \label{eq:cuppcuppalpha2}
  \\
  \text{and} \qquad
  & {}^{\cupp\cupp}\cc\alpha = \frac1{\di k} \sum_\beta \Tr_L (\beta \,{\circ}\, {}^{\cupp\cupp}\cc\alpha)\, \cc\beta
  \equ{eq:not-so-cyclic} \frac1{\di k} \sum_\beta \Tr_L (\cc\alpha \,{\circ}\, \beta)\, \cc\beta \,.
  \label{eq:cuppcuppalphabar2}
  \end{align}
On the other hand, the matrix $\MM$ can be expressed in terms of string diagrams as in \eqref{eq:MM3versions}, from which it directly follows that 
  \be
  \MM_{\alpha,\beta} = \frac1{\di i\,\di j} \, \Tr_L (\Bar\beta \,{\circ}\, \alpha) 
  = \frac1{\di\ib\,\di\jb} \, \Tr_R (\Bar\beta \,{\circ}\, \alpha) \,.
  \ee
Note that the equalities of string diagrams in \eqref{eq:MM3versions} imply in particular shows the equality of the two expressions for $\MM$ in \eqref{eq:MM2FGversions}.
\\[3pt]
(ii) {}By (i) we have in particular
  \be 
  \frac{\di k}{\di i\,\di j}\, \alpha^{\cupp\cupp} = \frac{\di\kb}{\di\ib\,\di\jb}\, {}^{\cupp\cupp\!}\alpha
  \label{eq:cuppcupp:l-vs-r}
  \ee
(and thus $\alpha^{\cupp\cupp} \,{=}\, {}^{\cupp\cupp\!}\alpha$ in case the left and right dimensions coincide).
It follows further that, up to a prefactor $\di k\di\kb/\di i\di\ib\di j\di\jb$, the morphism ${}^{\cupp\cupp\!}\alpha^{\cupp\cupp}$ is obtained from $\alpha$ by applying the matrix $\MM^2$. However, we already know from \eqref{ufu=f} that ${}^{\cupp\cupp\!}\alpha^{\cupp\cupp} \,{=}\, \alpha$. Hence we learn that the matrix with entries
  \be
  \sqrt{\frac{\di i\, \di\ib\, \di j\, \di\jb}{\di k\, \di\kb}}\, \MM^{(i\,j\,k)}_{\alpha,\beta}
  \stackrel{\eqref{eq:dimL.dimR},\eqref{eq:MM2FGversions}}=
  \sqrt{ \frac{\Fo k\,\Fob k} {\Fo i\,\Fob i\, \Fo j\,\Fob j} }\,
  \sum_\mu \F ij\jb i\OO k \oo\oo\beta\mu\, \G ij\jb ik\OO \alpha\mu\oo\oo
  \label{eq:involmatrix}
  \ee
is involutive.
As a consequence, the relations \eqref{eq:alphabarcuppcupp} imply that
  \be
  \alpha^{\cupp\cupp} \,{\circ}\, \cc\beta^{\cupp\cupp} 
  = \frac{\di i\,\di j}{\di k}\, \frac{\di\ib\,\di\jb}{\di\kb} \sum_{\gamma,\delta}
  \MM_{\alpha,\gamma}\, \MM_{\delta,\beta}\, \gamma\,{\circ}\, \cc\delta
  = \frac{\di i\,\di\ib\,\di j\,\di\jb}{\di k\,\di\kb} \big(\MM^2\big)_{\alpha,\beta}
  = \delta_{\alpha,\beta} \,.
  \label{eq:doubledual-vs-covector}
  \ee
The statement for the double left duals is seen in the same way.
\\[3pt]
(iii) follows directly from \eqref{eq:alphabarcuppcupp} via the last equality in \eqref{eq:doubledual-vs-covector}.
\end{proof}

Since the matrix \eqref{eq:involmatrix} is involutive, it can be diagonalized with eigenvalues $\pm1$. We refer to the basis consisting of the corresponding eigenvectors briefly as an \emph{eigenbasis} of the morphism space $\Hm ijk$. In an eigenbasis the diagonal elements of $\MM$ read
  \be
  \MM_{\alpha,\alpha} = \sqrt{\frac{\di k\, \di\kb}{\di i\, \di\ib\, \di j\, \di\jb}} \, \eps_\alpha
  \qquad \text{with} \qquad \eps_\alpha \,{\equiv}\, \eps_{i,j;\alpha}^{~k} \in \{\pm1\}
  \,.
  \label{eq:MMdiag}
  \ee

\begin{rem}
(i) For fusion categories over $\ko \eq \complex$, the linear automorphism realized by the involutive matrix \eqref{eq:involmatrix} has already been used in the proof of Theorem 2.3 of \cite{etno}. Thus in this case the matrix coincides with what is called a \emph{pivotal operator} in Definition 3.7 of \cite{bart6}. Accordingly, Lemma \ref{lem:1piv}(iii) amounts to Theorem 3.10 of \cite{bart6} (compare also \Cite{Thm.\,3}{haHon}). The eigenvalues $\eps_\alpha$ appear in \cite{bart6} as \emph{pivotal symbols}, with the formula \eqref{eq:MMdiag} corresponding to Lemma 3.15 \cite{bart6}.
 \\[3pt]
(ii) In the square-root expressions in \eqref{eq:involmatrix} and \eqref{eq:MMdiag} (as well as below, e.g.\ in \eqref{eq:dd:sumrule}), the paired dimension \eqref{eq:dimL.dimR} appears. 
As already noted, the paired dimension can be written explicitly as a square if the basis choice described in Remark \ref{rem:rescaledFi} is made. In this case a natural choice of square root is to take $\sqrt{\di i \, \di\ib} \eq 1/\Fo i \eq 1/\Fob i$ for every $i\iN S$.
In case $\ko \eq \complex$, another possible prescription is to take the positive square root; this is done in \cite{bart6}.
If some of the numbers $\Fo i$ are negative, the two prescriptions are different. In particular, also the values of the sign factors $\eps_{i,j;\alpha}^{~k}$ differ, but only by an ($\alpha$-independent) coboundary, so that various results involving these sign factors, like e.g.\ Lemma \ref{lem:barpi} below, are not affected.
\end{rem}

The following observations are now immediate:

\begin{lem} \label{lem:epsalpha}
(i) The basis $\{\alpha\}$ of $\Hm ijk$ can be chosen in such a way that the left and right double duals of the basis vectors are given by
  \be
  \alpha^{\cupp\cupp} = \eps_\alpha\,
  \sqrt{ \dd i\, \dd j \, \dd\kb } \, \alpha
  \qquad\text{and}\qquad
  {}^{\cupp\cupp}\alpha  = \eps_\alpha\,
  \sqrt{ \dd\ib \, \dd\jb \, \dd k } \, \alpha \,,
  \label{eq:doubleduals-diag}
  \ee
respectively, with $\eps_\alpha \iN \{\pm1\}$.
 \\[3pt]
(ii) With this choice, the elements of the covector-dual basis of $\Hd ijk$ satisfy
  \be
  \cc\alpha^{\cupp\cupp} = \eps_\alpha\,
  \sqrt{ \dd\ib \, \dd\jb \, \dd k } \, \cc\alpha
  \qquad\text{and}\qquad
  {}^{\cupp\cupp}\cc\alpha  = \eps_\alpha\,
  \sqrt{ \dd i\, \dd j \, \dd\kb } \, \cc\alpha \,.
  \label{eq:doubleduals-diag-bar}
  \ee
In short, we can write $\eps_{\cc\alpha} \eq \eps_\alpha$.
 \\[3pt]
(iii) For the standard basis elements $\id_i \iN \Hm i0i$, $\id_i \iN \Hm 0ii$ and 
$\oodd i \iN \Hm i\ib 0$ we have
  \be
  \eps_{\id_i} = +1 = \eps_{\oodd i} \,.
  \label{eq:eps-oodd=1}
  \ee
(iv) For every quadruple $\alpha \iN \Hm ipl$, $\beta \iN \Hm jkp$, $\gamma \iN \Hm ijq$, $\delta \iN \Hm qkl$ the implication
  \be
  \F ijklpq \alpha\beta\gamma\delta \ne 0 \,~\Longrightarrow~
  \eps_\alpha\, \eps_\beta = \eps_\gamma\, \eps_\delta
  \label{eq:epseps=epseps}
  \ee
holds.
 \\[3pt]
(v) The paired dimensions satisfy the sum rule
  \be
  \sum_{k\in\S} \sqrt{\di k^{}\,\di\kb}\, \sumN\alpha ijk \eps_{i,j;\alpha}^{~k}
  = \sqrt{\di i^{}\, \di\ib\, \di j^{}\, \di\jb}
  \label{eq:dd:sumrule}
  \ee
for $i,j \iN \S$.
\end{lem}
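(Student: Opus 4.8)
The plan is to establish the five claims (i)--(v) in turn, leaning on Lemma \ref{lem:1piv} and the surrounding structure. For (i), the assertion is that one can diagonalize the involutive matrix \eqref{eq:involmatrix} simultaneously with choosing the basis. Since that matrix is involutive (proven in Lemma \ref{lem:1piv}(ii)), it is diagonalizable with eigenvalues $\pm1$; picking an eigenbasis $\{\alpha\}$ of $\Hm ijk$ and substituting the diagonal form \eqref{eq:MMdiag} of $\MM$ into the expansion formulas \eqref{eq:alphabarcuppcupp} and \eqref{eq:cuppcuppalphabar} collapses the sums to a single term. Then one rewrites $\frac{\di i\,\di j}{\di k}\cdot\sqrt{\frac{\di k\,\di\kb}{\di i\,\di\ib\,\di j\,\di\jb}}$ as $\sqrt{\dd i\,\dd j\,\dd\kb}$ using $\dd x = \di x/\di\xb$ and the definition \eqref{eq:def:dd}; similarly for the left double dual one gets $\sqrt{\dd\ib\,\dd\jb\,\dd k}$. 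I would record this bookkeeping but not belabor it.

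For (ii), the key input is Lemma \ref{lem:1piv}(ii): $\cc\alpha^{\cupp\cupp}$ is covector-dual to $\alpha^{\cupp\cupp}$. Given \eqref{eq:doubleduals-diag}, write $\alpha^{\cupp\cupp} = \lambda\,\alpha$ with $\lambda = \eps_\alpha\sqrt{\dd i\,\dd j\,\dd\kb}$; then the dual basis element must scale by $\lambda^{-1}$ if we insisted on the covector-duality normalization $\alpha'\circ\cc\alpha = \delta_{\alpha,\alpha'}\id_k$ being literally preserved. But the claim \eqref{eq:doubleduals-diag-bar} is that $\cc\alpha^{\cupp\cupp} = \eps_\alpha\sqrt{\dd\ib\,\dd\jb\,\dd k}\,\cc\alpha$, i.e.\ it scales by the reciprocal of the $\Hm ijk$-scalar \emph{only up to the sign}; indeed $\sqrt{\dd\ib\,\dd\jb\,\dd k} = 1/\sqrt{\dd i\,\dd j\,\dd\kb}$ because $\dd\xb = 1/\dd x$. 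So the sign must come out $\eps_\alpha$ rather than $\eps_\alpha^{-1}=\eps_\alpha$, hence they agree; this is exactly the statement $\eps_{\cc\alpha}=\eps_\alpha$. Concretely I would cite \eqref{eq:alphabarcuppcupp} for $\cc\alpha^{\cupp\cupp}$ directly and read off the coefficient using the diagonal $\MM$, which is the cleanest route and avoids relying on the heuristic above.

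For (iii), the identity morphisms $\id_i\iN\Hm i0i$ and $\id_i\iN\Hm 0ii$ have $\id_i^{\cupp\cupp}=\id_{\xb i}=\id_i$ (on the nose, since $(-)^\cupp$ is a functor fixing objects and acting as identity on $\id$, per the discussion after \eqref{eq:def:fcupp}), and the relative dimensions of $0\eq\OO$ and $i$ arrange so the prefactor is $1$; hence $\eps_{\id_i}=+1$. For $\oodd i\iN\Hm i\ib 0$: here the relevant double-dual computation should again give coefficient $1$ because the evaluation/coevaluation pair was built precisely to satisfy the snake identities and $(-)^\cupp$ was defined via those morphisms; one checks $(\oodd i)^{\cupp\cupp}=\oodd i$ using the explicit definitions \eqref{eq:def:cup}--\eqref{eq:def:fcupp}, and compares with \eqref{eq:doubleduals-diag}, whose prefactor $\sqrt{\dd i\,\dd\ib\,\dd 0}=\sqrt{\dd i\cdot(1/\dd i)\cdot 1}=1$ forces $\eps_{\oodd i}=+1$. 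For (iv): apply $(-)^{\cupp\cupp}$ (functorially) to both sides of the defining relation \eqref{eq:ab=F.cd-} (equivalently \eqref{eq:defF-simplicial}); since the double dual is a monoidal functor, $\F ijklpq\alpha\beta\gamma\delta$ is unchanged but each basis morphism acquires its eigenvalue times a dimension factor, and matching the total dimension prefactors on both sides (they agree by multiplicativity of $\dd{-}$ over the fusion consistent with the nonzero structure constant) leaves the sign constraint $\eps_\alpha\eps_\beta=\eps_\gamma\eps_\delta$. Alternatively, and perhaps more cleanly, this follows from (v) of Lemma \ref{lem:epsalpha} not being needed here --- one can instead use \eqref{eq:epseps=epseps} as a direct consequence of \eqref{eq:doubleduals-diag} applied inside the $\FF$--$\GG$ inversion, or note it follows from the fact that the sign factors define a $2$-cochain whose relation to $\FF$ is multiplicative.

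The main obstacle I expect is (v), the sum rule $\sum_{k}\sqrt{\di k\,\di\kb}\sum_\alpha\eps_{i,j;\alpha}^{~k} = \sqrt{\di i\,\di\ib\,\di j\,\di\jb}$. The natural strategy is to compute $\Tr_L\otimes$ something, or better: take the trace (left, say) of the completeness relation \eqref{eq:dominance}/\eqref{eq:sum-alphabar-alpha} for $\id_i\otimes\id_j$, i.e.\ apply $\Tr_L$ to $\id_i\oti\id_j = \sum_{k,\alpha}\cc\alpha\circ\alpha$. The left-hand side gives $\Tr_L(\id_{i\oti j}) = \dim_L(i\oti j) = \di i\,\di j$ (using multiplicativity of $\dim_L$). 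On the right, $\Tr_L(\cc\alpha\circ\alpha)$ must be related to $\Tr_L(\alpha\circ\cc\alpha) = \Tr_L(\id_k)\cdot(\text{something})$ via the non-cyclicity rule \eqref{eq:not-so-cyclic}: $\Tr_L(\cc\alpha\circ\alpha) = \Tr_L(\alpha\circ{}^{\cupp\cupp\!}\cc\alpha)$. Now ${}^{\cupp\cupp}\cc\alpha = \eps_\alpha\sqrt{\dd i\,\dd j\,\dd\kb}\,\cc\alpha$ by \eqref{eq:doubleduals-diag-bar}, so $\Tr_L(\cc\alpha\circ\alpha) = \eps_\alpha\sqrt{\dd i\,\dd j\,\dd\kb}\,\Tr_L(\alpha\circ\cc\alpha) = \eps_\alpha\sqrt{\dd i\,\dd j\,\dd\kb}\,\di k$. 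Summing over $k$ and $\alpha$ and equating with $\di i\,\di j$, then multiplying through by $\sqrt{\di\ib\,\di\jb/\di i\,\di j}$ and using $\di k\sqrt{\dd i\,\dd j\,\dd\kb} = \di k\sqrt{(\di i\,\di j/\di\ib\,\di\jb)(\di\kb/\di k)} = \sqrt{\di k\,\di\kb}\sqrt{\di i\,\di j/\di\ib\,\di\jb}$, one arrives at $\sqrt{\di i\,\di j\,\di\ib\,\di\jb} = \sum_k\sqrt{\di k\,\di\kb}\sum_\alpha\eps_{i,j;\alpha}^{~k}$, which is \eqref{eq:dd:sumrule}. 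The delicate points are getting the direction of the non-cyclicity swap right (left vs.\ right double dual) and confirming $\Tr_L(\alpha\circ\cc\alpha)=\di k$, which holds because $\alpha\circ\cc\alpha=\id_k$ by \eqref{eq:dualbasis}; these are the places where I would be most careful.
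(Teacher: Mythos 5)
Your proposal is correct and follows essentially the same route as the paper: parts (i)--(iii) via the eigenbasis diagonalization of the involutive matrix $\MM$ from Lemma \ref{lem:1piv}, part (iv) by applying the linear functor $(-)^{\cupp\cupp}$ to the defining relation \eqref{eq:ab=F.cd-} and cancelling the relative-dimension factors, and part (v) by tracing the completeness relation \eqref{eq:dominance}. Your phrasing of (v) through $\Tr_L$ and the non-cyclicity rule \eqref{eq:not-so-cyclic} is just the categorical restatement of the paper's identity $\sum_{k,\alpha}\MM_{\alpha,\alpha}\eq 1$ (which the paper itself calls the ``covector trace'' of the completeness relation), so no substantive difference arises.
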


\begin{proof}
When taking the basis of $\Hm ijk$ to be an eigenbasis, the equalities \eqref{eq:doubleduals-diag} and \eqref{eq:doubleduals-diag-bar} follow directly from Lemma \ref{lem:1piv}(i). The equalities \eqref{eq:eps-oodd=1} hold because we have, trivially, $(\id_i)^{\cupp\cupp} \eq \id_i$ as well as, as seen by direct calculation, $(\oodd i)^{\cupp\cupp} \eq \oodd i$ for every $i \iN \S$.
 \\[3pt]
The formula \eqref{eq:epseps=epseps} is obtained by specializing the equality
  \be
  \alpha^{\cupp\cupp} \circ (\id_{i} \oti \beta^{\cupp\cupp})  \circ a_{i,j,k}
  = \sumI q \sum_{\gamma,\delta} \F ijklpq\alpha\beta\gamma\delta \,
  \delta^{\cupp\cupp} \circ (\gamma^{\cupp\cupp} \oti \id_{k})
  \ee
to bases of eigenvectors in each of the spaces $\Hm ipl$, $\Hm jkp$, $\Hm ijq$ and $\Hm qkl$. The latter equality, in turn, follows directly from the definition \eqref{eq:ab=F.cd-} of the \sjs s by the fact that $(-)^{\cupp\cupp}$ is a linear functor. (For a different proof of \eqref{eq:epseps=epseps} see \Cite{Cor.\,3.26}{bart6}.)
 \\[3pt]
Finally, the sum rule \eqref{eq:dd:sumrule} is obtained by combining the formula \eqref{eq:MMdiag} with the identity
  \be
  \sum_{k\in S} \sumN\alpha ijk \MM_{\alpha,\alpha}
  \equ{eq:MM2FGversions} \sum_{k,\alpha,\beta} \G \ib ijj\OO k \oo\oo\alpha\beta\, \F \ib ijjk\OO \alpha\beta\oo\oo 
  = \big(\GM \ib ijj\, \FM \ib ijj \big)_{\!\oo\OO\oo,\oo\OO\oo} = 1 \,,
  \label{eq:traceMM2}
  \ee
which is the ``covector trace'' of the completeness relation \eqref{eq:dominance}.
\end{proof}
	
The formula \eqref{eq:eps-oodd=1} and the sum rule \eqref{eq:dd:sumrule} correspond to Lemma 3.16 and Proposition 3.17 of \cite{bart6}, respectively.

\begin{rem}
(i) The square root factors in \eqref{eq:doubleduals-diag} and \eqref{eq:doubleduals-diag-bar}
are equal to 1 iff 
  \be
  \dd i \, \dd j = \dd k \quad \text{for}~~ \N ijk \,{\ne}\, 0 \,.
  \ee
According to \eqref{eq:quadrupledual-id} this is the case iff the quadruple dual is the identity functor.
 \\[3pt]
(ii) If we choose the parameters $\mu_i$ as in \eqref{eq:mu-choice} so that left and right dimensions coincide, then in an eigenbasis of $\Hm ijk$ we just have $\alpha^{\cupp\cupp} \eq \eps_\alpha\, \alpha$. More generally, according to \eqref{eq:reldim} we have
  \be
  \alpha^{\cupp\cupp} = \eps_\alpha\, \frac{\mu_\ib\,\mu_\jb\,\mu_k}{\mu_i\,\mu_j\,\mu_\kb}
  \, \sqrt{ \frac {\Fo i\,\Fo j\,\Fob k} {\Fob i\,\Fob j\,\Fo k} } \, \alpha \;.
  \ee
Thus in particular for some triples $(i,j,k) \iN \S^{\times 3}$ involving non-self-conjugate objects we can compensate the signs $\eps_\alpha$ (uniformly for all members of an eigenbasis) by judiciously modifying the choice of parameters $\mu_i$.
 \\[3pt]
(iii) As an immediate consequence of the fact that $(\alpha^\cupp)^{\cupp\cupp} \eq (\alpha^{\cupp\cupp})^{\cupp}$, in the eigenbasis we also have
  \be
  (\alpha^\cupp)^{\cupp\cupp}
  = \eps_\alpha\, \sqrt{ \dd\ib\, \dd\jb\, \dd k }\, \alpha^\cupp
  \label{eq:doubleduals-diag-dual}
  \ee
for each basis vector $\alpha$, as well as analogous formulas involving left rigidities and/or dual basis vectors $\cc\alpha$.
 \\[3pt]
(iv) In the multiplicity-free case, i.e.\ when $\dimk(\Hm ijk) \iN \{0,1\}$ for all $i,j,k \iN \S$, every basis is automatically an eigenbasis. Accordingly, in applications in condensed matter physics that restrict attention to the multiplicity-free case, the diagonalization of the matrix $\MM$ is not an issue.
\end{rem}

Since $\Tr_L (\alpha\,{\circ}\,\Bar\beta) \,{=}\, \delta_{\alpha,\beta}\,\di k$ and $\Tr_R (\alpha\,{\circ}\,\Bar\beta) \,{=}\, \delta_{\alpha,\beta}\,\di\kb$, the formulas \eqref{eq:alphabarcuppcupp} and \eqref{eq:cuppcuppalphabar} imply that \D\ can be endowed with a strict pivotal structure if and only if the traces $\Tr_L$ and $\Tr_R$ are cyclic. 
More generally, any pivotal structure $\pi$ on \D\ is completely determined by its components $\pi_i$ on the simple objects $i \iN \S$: for any object $x$ we have
  \be
  \pi_x = \sum_{i \in \S} \sum_{\gamma} \Bar{\gamma} \circ \pi_i \circ \gamma^{\cupp\cupp} ,
  \label{eq:pi-candidate}
  \ee
where the $\gamma$-summation is over a basis of $\Hom(x,i)$. 
Moreover, since the double dual is the identity on objects, $\pi_i$ is just an invertible multiple of the identity morphism. It is convenient to express this as
  \be
  \pi_i = \sqrt{\dd i}\, \barpi_i \, \id_i
  \label{eq:def.barpi}
  \ee
with some invertible scalars $\barpi_i$, for $i \iN \S$. 
We then get (compare Theorem 5.4 and Lemma 5.7 of \cite{bart6})

\begin{lem} \label{lem:barpi}
(i) A \ndgusion category \D\ admits a pivotal structure only if for every triple $(i,j,k)\iN \S^{\times 3}$ the matrix $\MM^{(i\,j\,k)}$ is a multiple of the identity matrix, so that
  \be
  \eps_{i,j;\alpha}^{~k} = \eps_{ij}^{\,k}
  \label{eq:eps-ijk}
  \ee
does not depend on the specific element $\alpha$ of an eigenbasis of $\Hm ijk$, and hence every basis is an eigenbasis.
 \\[3pt]
(ii) A pivotal structure on a \ndgusion category \D\ is characterized by a collection 
$\{\barpi_i\mid i\iN\S\}$ of roots of unity satisfying the coboundary equation
  \be
  \barpi_i\, \barpi_j = \eps_{i,j}^{~k}\, \barpi_k
  \label{eq:barpi.ijk}
  \ee
for $\N ijk \,{\ne}\, 0$, as well as 
  \be
  \barpi_i\, \barpi_\ib = 1
  \label{eq:barpibarpib=1}
  \ee	
for all $i\iN\S$ and $\barpi_\OO \eq 1$.
\end{lem}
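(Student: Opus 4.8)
Write-up plan for Lemma~\ref{lem:barpi}.

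The plan is to use that, with the conventions of Section~\ref{sec:piv}, a pivotal structure $\pi$ is determined by very little data. Since $(-)^{\cupp\cupp}$ is the identity on objects, each component is of the form $\pi_i\eq\lambda_i\,\id_i$ with $\lambda_i\iN\ko^\times$ for $i\iN\S$, and once these scalars are prescribed the value $\pi_x$ on an arbitrary object is forced by \eqref{eq:pi-candidate} and automatically defines a natural transformation. Furthermore the rescaled duality $(-)^\cupp$ is strictly anti-monoidal --- one has $(f\oti g)^\cupp\eq g^\cupp\oti f^\cupp$ (the rescaling factors multiplying over $\oti$) and it respects associators by the coherence of rigidity --- so $(-)^{\cupp\cupp}$ is a strictly monoidal functor, and monoidality of $\pi$ amounts simply to $\pi_{x\oti y}\eq\pi_x\oti\pi_y$ together with $\pi_\OO\eq\id_\OO$. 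By a routine semisimplicity argument this, combined with naturality, is equivalent to a single family of identities on the generating morphisms: applying naturality of $\pi$ to $\alpha\iN\Hm ijk$ and using $\pi_{i\oti j}\eq\lambda_i\lambda_j\,\id_{i\oti j}$ gives
\[
  \lambda_k\,\alpha^{\cupp\cupp} \eq \lambda_i\,\lambda_j\,\alpha
  \qquad\text{for all } i,j,k\iN\S \text{ and } \alpha\iN\Hm ijk ,
\]
of which $\pi_\OO\eq\id_\OO$ is the special case $i\eq j\eq k\eq\OO$.

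For part~(i) I would substitute into this identity the expansion $\alpha^{\cupp\cupp}\eq\frac{\di i\,\di j}{\di k}\sum_\beta\MM^{(i\,j\,k)}_{\alpha,\beta}\,\beta$ of Lemma~\ref{lem:1piv}(i). Comparing coefficients in the basis $\{\beta\}$ forces $\MM^{(i\,j\,k)}_{\alpha,\beta}\eq\frac{\lambda_i\lambda_j\,\di k}{\lambda_k\,\di i\,\di j}\,\delta_{\alpha,\beta}$, so $\MM^{(i\,j\,k)}$ is a scalar matrix; by \eqref{eq:MMdiag} all its diagonal entries in an eigenbasis, hence all the signs $\eps_\alpha$, then coincide, which is exactly \eqref{eq:eps-ijk}, and since a scalar matrix acts the same way in every basis, every basis of $\Hm ijk$ is an eigenbasis.

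For the forward implication in part~(ii), granted part~(i) the formula \eqref{eq:doubleduals-diag} is valid in any basis, $\alpha^{\cupp\cupp}\eq\eps_{i,j}^{~k}\,\sqrt{\dd i\,\dd j\,\dd\kb}\,\alpha$, so the generator identity becomes $\lambda_i\lambda_j\eq\eps_{i,j}^{~k}\sqrt{\dd i\,\dd j\,\dd\kb}\,\lambda_k$ whenever $\N ijk\nE0$. Substituting $\lambda_i\eq\sqrt{\dd i}\,\barpi_i$ from \eqref{eq:def.barpi} and using $\dd k\,\dd\kb\eq1$ (immediate from \eqref{eq:reldim} and $\Bar{\Bar k}\eq k$) collapses this to the coboundary equation \eqref{eq:barpi.ijk}; the unit axiom gives $\barpi_\OO\eq1$; and specializing \eqref{eq:barpi.ijk} to $j\eq\ib$, $k\eq\OO$ --- legitimate since $\N i\ib\OO\eq1$ by Definition~\ref{def:ourfusion}(iii) --- together with $\eps_{\oodd i}\eq1$ from Lemma~\ref{lem:epsalpha}(iii) yields \eqref{eq:barpibarpib=1}. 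That the $\barpi_i$ are roots of unity follows because the $\eps_{i,j}^{~k}$ square to $1$: the map $i\mapsto\barpi_i^2$ then satisfies $\barpi_i^2\,\barpi_j^2\eq\barpi_k^2$ for $\N ijk\nE0$ and $\barpi_\OO^2\eq1$, hence defines a $\ko^\times$-valued grading of \D\ and therefore factors through the universal grading group; as the latter is finite for a fusion category, $\barpi_i^2$ is a value of a character of a finite group, so a root of unity, whence so is $\barpi_i$.

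Conversely, given roots of unity $\{\barpi_i\mid i\iN\S\}$ satisfying \eqref{eq:barpi.ijk}, \eqref{eq:barpibarpib=1} and $\barpi_\OO\eq1$ --- whose very statement presupposes the conclusion of part~(i) --- I would set $\pi_i\df\sqrt{\dd i}\,\barpi_i\,\id_i$ and extend by \eqref{eq:pi-candidate}; this is a natural isomorphism by construction, and reading the computations above backwards shows that \eqref{eq:barpi.ijk} is precisely the generator identity $\lambda_k\alpha^{\cupp\cupp}\eq\lambda_i\lambda_j\alpha$ encoding monoidality, while $\barpi_\OO\eq1$ gives $\pi_\OO\eq\id_\OO$; so $\pi$ is a pivotal structure and the two assignments are mutually inverse. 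I expect the main obstacle to be the reduction carried out in the first paragraph --- establishing cleanly that the strict monoidality of $(-)^{\cupp\cupp}$ makes monoidality of $\pi$ equivalent to the single generator identity $\lambda_k\alpha^{\cupp\cupp}\eq\lambda_i\lambda_j\alpha$ --- after which parts~(i) and~(ii) are coefficient bookkeeping; the only genuinely external ingredient is the finiteness of the universal grading group invoked for the roots-of-unity claim.
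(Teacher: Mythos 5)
Your proposal is correct and follows essentially the same route as the paper: reduce monoidality of $\pi$ to the generator identity $\pi_k\circ\alpha^{\cupp\cupp}=\alpha\circ(\pi_i\oti\pi_j)$ on the basic morphism spaces, substitute the expansion of the double dual from Lemma \ref{lem:1piv}, read off the coboundary equation, and invoke finiteness of the universal grading group for the roots-of-unity claim. The only cosmetic difference is that you establish part (i) by comparing coefficients in an arbitrary basis to conclude that $\MM^{(i\,j\,k)}$ is a scalar matrix, whereas the paper works in an eigenbasis and observes that $\eps_{i,j;\alpha}^{~k}$ cannot depend on $\alpha$; the two arguments are equivalent.
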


\begin{proof}
Since all morphisms can be expressed in terms of the ones in the basic morphism spaces, the requirement that $\pi$ is a monoidal natural transformation boils down to the condition that
  \be
  \pi_k \circ \alpha^{\cupp\cupp} = \alpha \circ (\pi_i \oti \pi_j)
  \label{eq:pi.cupcup=pipi}
  \ee
for all $\alpha \iN \Hm ijk$, for all $i,j,k\iN \S$. Once we restrict to bases of the spaces $\Hm ijk$ consisting of eigenvectors of the double dual, we can use \eqref{eq:doubleduals-diag} and \eqref{eq:def.barpi} to rewrite the condition \eqref{eq:pi.cupcup=pipi} as $\barpi_i\, \barpi_j \eq \eps_{i,j;\alpha}^{~k}\, \barpi_k$ whenever $\N ijk \,{\ne}\, 0$. This can only hold if $\eps_{i,j;\alpha}^{~k}$ actually does not depend on $\alpha$, thus proving (i).
Further, in this case the scalars $\barpi_i$ defined by \eqref{eq:def.barpi} satisfy \eqref{eq:barpi.ijk}. That $\barpi_\OO \eq 1$ follows from the fact that $\eps_{i\OO}^{\,i} \eq 1 \eq \eps_{\OO i}^{\,i}$ any $i \iN \S$, and then $\barpi_i\, \barpi_\ib \eq 1$ follows from $\eps_{i\ib}^{\,\OO} \eq 1$ which, in turn, is a consequence of \eqref{eq:eps-oodd=1}.
Finally observe that the squares of the so-obtained numbers $\barpi_i$ furnish a grading of the Grothendieck ring $\Gr(\D)$ and thus, by Corollary 3.7 of \cite{geNi}, a one-dimensional representation of the \textit{universal grading group} of $\Gr(\D)$; since the latter group is finite,
this implies that the numbers $\barpi_i$ are roots of unity.
\end{proof}

\begin{rem} \label{rem:Tfunctor}
(i) If the signs \eqref{eq:eps-ijk} are all equal to 1, then the coboundary equation \eqref{eq:barpi.ijk} is satisfied trivially with $\barpi_i \eq 1$ for all $i \iN \S$. Thus in this case \D\ does admit a pivotal structure.
 \\[3pt]
(ii) The collection of involutive matrices of which the signs $\eps_{i,j;\alpha}^{~k}$ are the eigenvalues can be extended uniquely to a natural transformation $T$ from the tensor product functor to itself, which endows the identity functor with the structure of a monoidal functor \Cite{Prop.\,3.25}{bart6}. Moreover, there is (canonically) a monoidal natural isomorphism between the so-obtained monoidal functor $\mathcal T$ and the double dual functor \Cite{Thm.\,3.29}{bart6}. Since the functor $\mathcal T$ is the identity as a monoidal functor iff all the $\eps_{i,j;\alpha}^{~k}$ are equal to 1, this reproduces in particular the observation of part (i).
\end{rem}
	    
Note that the assertions of Lemma \ref{lem:barpi}, while formulated for \ndgusion categories, are preserved under monoidal natural equivalence and therefore apply in fact to all fusion categories.

\medskip

Next recall that the \emph{Frobenius-Perron dimensions} $\dF i$ are the unique positive real numbers $\dF i$ obeying $\sum_{k\in\S}\N ijk\,\dF k \eq \dF i\,\dF j$. A pivotal fusion category over $\ko \eq \complex$ is \emph{pseudo-unitary} if and only if the paired pivotal dimension of every simple object coincides with the square of its Frobenius-Perron dimension \Cite{Sect.\,9.4}{EGno}, i.e.\
  \be
  \dim_L^\piv(i) \, \dim_R^\piv(i) = \big( \dF i \big)^2_{}
  \label{eq:def:pseudo}
  \ee
for every $i \iN \S$. We have

\begin{lem} \label{lem:pseudo-etc}
(i) The pivotal dimensions of a pivotal \ndgusion category obey
  \be
  \dim_R^\piv(i) = \dim_L^\piv(\ib) 
  \label{eq:dimRpiv-dimLpiv}
  \ee
and 
  \be
  \dim_L^\piv(i) \, \dim_R^\piv(i) = \di i\,\di \ib \,.
  \label{eq:dimRpiv.dimLpiv}
  \ee
(In particular, the paired pivotal dimension actually does not dependent on the pivotal structure.)
 \\[3pt]
(ii) In a pseudo-unitary \ndgusion category \D\ the matrix \eqref{eq:MMdiag} is the identity matrix, for every triple $(i,j,k)\iN \S^{\times 3}$, so that in particular
  \be
  \eps_{i,j;\alpha}^{~k} = 1
  \label{eq:eps=1}
  \ee
for all $i,j,k \iN \S$ and all basis elements $\alpha \iN \Hm ijk$.
\end{lem}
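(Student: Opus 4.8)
The plan is to prove part (i) first and then derive part (ii) as a consequence. For part (i), I would start from the defining formulas for the pivotal dimensions in \eqref{eq:def:dimLRpiv}, namely $\dim_L^\piv(i) \eq \Tr_L(\pi_i^{-1})$ and $\dim_R^\piv(i) \eq \Tr_R(\pi_i)$, together with the parametrization \eqref{eq:def.barpi}, $\pi_i \eq \sqrt{\dd i}\,\barpi_i\,\id_i$. Since $\pi_i$ is a scalar multiple of $\id_i$, we get $\dim_L^\piv(i) \eq (\sqrt{\dd i}\,\barpi_i)^{-1}\dim_L(i)$ and $\dim_R^\piv(i) \eq \sqrt{\dd i}\,\barpi_i\,\dim_R(i)$. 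Multiplying these, the factors $\sqrt{\dd i}\,\barpi_i$ cancel, leaving $\dim_L^\piv(i)\,\dim_R^\piv(i) \eq \dim_L(i)\,\dim_R(i)$. Now by \eqref{eq:dimL.dimR} this equals $1/(\Fo i\,\Fob i)$; but we also know $\di i\,\di\ib \eq \dim_L(i)\,\dim_L(\ib) \eq \dim_L(i)\,\dim_R(i)$ by \eqref{eq:dimL,dimR}, which gives \eqref{eq:dimRpiv.dimLpiv}. For \eqref{eq:dimRpiv-dimLpiv}, I would use $\dim_R^\piv(i) \eq \sqrt{\dd i}\,\barpi_i\,\dim_R(i) \eq \sqrt{\dd i}\,\barpi_i\,\dim_L(\ib)$ via \eqref{eq:dimL,dimR}, and then rewrite $\dim_L^\piv(\ib) \eq (\sqrt{\dd\ib}\,\barpi_\ib)^{-1}\dim_L(\ib)$; the two agree provided $\sqrt{\dd i}\,\barpi_i \eq (\sqrt{\dd\ib}\,\barpi_\ib)^{-1}$, i.e.\ $\sqrt{\dd i\,\dd\ib}\,\barpi_i\,\barpi_\ib \eq 1$. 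This follows from $\barpi_i\,\barpi_\ib \eq 1$ (Equation \eqref{eq:barpibarpib=1}) together with $\dd i\,\dd\ib \eq 1$, the latter being immediate from the definition \eqref{eq:def:dd} of $\dd{}$ since $\dd\ib \eq \dim_L(\ib)/\dim_R(\ib) \eq \dim_R(i)/\dim_L(i) \eq 1/\dd i$.

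For part (ii), I would combine \eqref{eq:dimRpiv.dimLpiv} with the pseudo-unitarity condition \eqref{eq:def:pseudo}, which immediately yields $\di i\,\di\ib \eq (\dF i)^2$ for every $i\iN\S$; since $\dF{}$ and $\dF\ib$ agree (Frobenius--Perron dimensions are preserved under duality), and comparing with the Frobenius--Perron multiplicativity $\sum_k \N ijk\,\dF k \eq \dF i\,\dF j$, I want to conclude that $\sqrt{\di i\,\di\ib} \eq \dF i$ as positive real numbers, so that the paired dimensions are multiplicative in the same way as the $(\dF i)^2$. Then I would invoke the sum rule \eqref{eq:dd:sumrule}: for fixed $i,j$,
  \be
  \sum_{k\in\S} \dF k \sumN\alpha ijk \eps_{i,j;\alpha}^{~k} = \dF i\,\dF j = \sum_{k\in\S}\N ijk\,\dF k \,.
  \ee
Since each $\eps_{i,j;\alpha}^{~k} \iN \{\pm1\}$, the left-hand side is a sum of $\sum_k \N ijk$ terms each of absolute value $\dF k$ (with sign), while the right-hand side is the same sum with all signs $+1$; because all $\dF k$ are strictly positive, the only way the signed sum can equal the all-positive sum is if every sign is $+1$. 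This forces $\eps_{i,j;\alpha}^{~k} \eq 1$ for all $k$ with $\N ijk\nE 0$ and all $\alpha$, which by \eqref{eq:MMdiag} means $\MM^{(i\,j\,k)}$ is the identity matrix.

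The main obstacle I anticipate is the step identifying $\sqrt{\di i\,\di\ib}$ with $\dF i$ rather than merely knowing $\di i\,\di\ib \eq (\dF i)^2$; a priori the square root of the paired dimension could be a negative real or even fail to be real, so one must argue (e.g.\ via pseudo-unitarity together with the fact that the canonical spherical structure of a pseudo-unitary category has all pivotal dimensions equal to the positive Frobenius--Perron dimensions, \Cite{Prop.\,9.4.4 \& 9.5.1}{EGno}) that with the canonical spherical structure $\dim_L^\piv(i) \eq \dim_R^\piv(i) \eq \dF i \,{>}\,0$, whence each factor individually is $\dF i$. Once that positivity is in hand, the sign-cancellation argument in the sum rule is the clean finishing move; it is essentially a rigidity statement about sums of positive reals and poses no real difficulty. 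The rest is bookkeeping with the formulas already established in Lemmas \ref{lem:1piv} and \ref{lem:barpi}.
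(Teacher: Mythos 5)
Your proposal is correct and follows essentially the same route as the paper: part (i) by writing $\dim_L^\piv(i)=\barpi_i^{-1}\sqrt{\di i\,\di\ib}$ and $\dim_R^\piv(i)=\barpi_i\,\sqrt{\di i\,\di\ib}$ and invoking $\barpi_i\barpi_\ib=1$, and part (ii) by identifying the positive square roots $\sqrt{\di i\,\di\ib}$ with the Frobenius--Perron dimensions and then using the sum rule \eqref{eq:dd:sumrule} to force all signs $\eps_{i,j;\alpha}^{~k}$ to equal $+1$. Your explicit positivity/sign-rigidity argument is exactly what the paper's compressed phrase ``furnish a character \dots\ if and only if $\eps_{i,j;\alpha}^{~k}=1$'' rests on.
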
 

\begin{proof}
(i) Combining \eqref{eq:barpi.ijk} with the expressions \eqref{eq:def:dimLRpiv} for 
the pivotal dimensions gives
  \be
  \dim_L^\piv(i) = \sqrt{\di i\,\di \ib}\; \barpi_i^{-1} \qquand
  \dim_R^\piv(i) = \sqrt{\di i\,\di \ib}\; \barpi_i^{} \,.
  \label{eq:def:dimLRpiv.D}
  \ee
Together with \eqref{eq:barpibarpib=1} this implies both \eqref{eq:dimRpiv-dimLpiv} 
and \eqref{eq:dimRpiv.dimLpiv}.
 \\[3pt]
(ii) For $\ko \eq \complex$ the paired dimensions are positive 
\Cite{Thm.\,7.21.12}{EGno}, hence the numbers $\sqrt{\di i\, \di\ib}$ are real, and
we may choose square roots such that they are positive. Further, 
the sum rule \eqref{eq:dd:sumrule} implies that the $\sqrt{\di i\, \di\ib}$ furnish a character of the Grothendieck ring $\Gr(\D)$ if and only if $\eps_{i,j;\alpha}^{~k} \eq 1$ for all $i,j,k$ and $\alpha$. Being
positive, they must then coincide with the Frobenius-Perron dimensions, whereby the 
equality \eqref{eq:dimRpiv.dimLpiv}
amounts to the characterization of \eqref{eq:def:pseudo} of pseudo-unitarity.
\end{proof}

Part (ii) of Lemma \ref{lem:pseudo-etc} corresponds to Corollary 3.22 of \cite{bart6}.

\begin{rem}
(i) The formulas \eqref{eq:def:dimLRpiv.D} also show that the pivotal structure $\piv$ is spherical if and only if $\barpi_i \iN \{ \pm1 \}$ for every $i \iN \S$, compare \Cite{Thm.\,5.4}{bart6}. Note that owing to \eqref{eq:barpibarpib=1} this condition is automatically satisfied if all objects $i \iN \S$ are self-conjugate.
 \\[3pt]
(ii) A pseudo-unitary fusion category \C\ admits a canonical spherical structure \Cite{Prop.\,8.13}{etno}. By \eqref{eq:eps=1} and \eqref{eq:barpi.ijk}, for that structure the signs $\barpi \iN \{\pm 1\}$ furnish a $\mathbb Z_2$-grading of the Grothen\-dieck ring of \C.
 \\[3pt]
(iii) A fusion category over \complex\ is called \emph{Hermitian} if every morphism space is endowed with a non-degenerate Hermitian form in such a way that some natural compatibility conditions are fulfilled. If all these forms are positive definite, then the category is called \emph{unitary}. It is known (see \Cite{Sect.\,4}{yama8} and \Cite{Rem.\,2.2(ii)}{gali2}) that a fusion category over \complex\ is unitary if and only if there is a choice of bases in the morphism spaces $\Hm ijk$ such that the matrices $\FFm ijklpq$ formed by the \sjs s are unitary. A unitary fusion category is in particular pseudo-unitary; while the unitarity of $\FF$-matrices can be technically convenient, in our context pseudo-unitarity is the more interesting property. (However, no example is known of a pseudo-unitary fusion category over \complex\ that does not admit a unitary structure.)
\end{rem}

\begin{rem}
For any pivotal fusion category \C, with pivotal structure $\piv$ and with chosen set $\S$ of representatives $X_i$ for the isomorphism classes of simple objects, one defines the \emph{Frobenius-Schur endomorphism} $\FS_i$ of $X_i$ by
  \be
  \FS_i := \big( \ev_{X_i^\vee} \oti \vapi_i^{-1} \big)
  \circ \big( \id_{X_i^{\vee\vee}} \oti \vapi_\ib \oti \id_{X_\ib^\vee} \big)
  \circ \big( \piv_{X_i^{}}^{-1} \oti \coev_{X_\ib} \big)
  ~ \in \Hom(X_i^{},X_i^{}) \,,
  \label{eq:def:FSi}
  \ee
where $\ib$ is the label such that $X_\ib$ is isomorphic to $X_i^\vee$ and, for each $i$, 
  \be
  \vapi_i \in \Hom(X_i^{},X_\ib^\vee)
  \label{eq:vapi}
  \ee
is a fixed selection of isomorphism. By definition, $\FS_i$ is an invertible multiple of the identity morphism. Accordingly we can write $\FS_i \,=:\, \fs_i\, \id_{X_i}^{}$ with invertible numbers $\fs_i$. 
It is straightforward to show that 
  \be
  \fs_i\, \fs_\ib\, \Tr_R(\piv_i^{}) = \tr_L(\piv_\ib^{-1})
  \ee
or, when expressed in terms of the pivotal dimensions \eqref{eq:def:dimLRpiv}, $\fs_i\, \fs_\ib\, \dim_R^\piv(X_i) \eq \dim_L^\piv(X_\ib)$. Using further that $\dim_L^\piv(X_\ib) \eq \dim_L^\piv(X_i^\vee)$ and that (see e.g.\ \Cite{Exc.\,4.7.9}{EGno}) $\piv_{X_i^\vee} \eq (\piv_{X_i})^{\vee^{\scriptstyle -1}}$, this simplifies to
  \be
  \fs_i\, \fs_\ib = 1
  \label{eq:fsi-fsib}
  \ee
for every $i \iN \S$. 
In case the simple object $X_i$ is self-conjugate, i.e.\ $\ib \eq i$, the number $\fs_i$ is independent of the choice of isomorphism $\vapi_i$ and takes values in $\{\pm1\}$; this number is called the \emph{Frobenius-Schur indicator} of $X_i$. If $X_i$ is not self-conjugate., then $\vapi_i$ and $\vapi_\ib$ can be chosen in such a way that $\fs_i \eq 1 \eq \fs_\ib$.
 \\[3pt]
In the special case of a pivotal \NDgusion category \D, we have $X_i^\vee \eq X_\ib$ and can identify $X_i \,{\equiv}\, i$, so that in particular $\piv_i$ as well as $\vapi_i$ are automorphisms and thus just multiples of the identity. Writing $\piv_i \eq \barpi_i\, \id_i$ as above, as well as $\vapi_i \eq \bar\vapi_i\, \id_i$, the number $\fs_i$ is expressed as
  \be
  \fs_i = \frac{\bar\vapi_\ib}{\bar\vapi_i}\, \frac1{\barpi_i} \,.
  \ee
This implies e.g.\ $\fs_i\, \fs_\ib \eq (\barpi_i\, \barpi_\ib)^{-1}$ in agreement with \eqref{eq:fsi-fsib}, and for self-conjugate $i$ the identification $\fs_i \eq \barpi_i^{-1} \eq \barpi_i$ of the Frobenius-Schur indicator with the scalar given by the pivotal structure.
In particular, in view of \eqref{eq:dimL.dimR}, and making the root choice $\sqrt{(\FO i)_{\phantom1}^2} \eq \FO i$, we can express the pivotal dimension of a self-conjugate simple object as
  \be
  \dim_L^\piv(i) = \dim_R^\piv(i) = \frac{\fs_i}{\FO i} \equ{eq:dimL,dimR} \fs_i\, \di i
  \ee	
in terms of its Frobenius-Schur indicator and its non-pivotal dimension. This illustrates in particular the dependence of the Frobenius-Schur indicator on the pivotal structure.
\end{rem}

%%%%%%%%%%%%%%%%%%%%%%%%%%%%%%%%%%%%%%%%%%%%%%%%%%%%%%%%%%%%%%%%%%%%

\subsection{Deconstructing rigidity} \label{sec:decon}

It turns out to be instructive to disassemble the action of the rigidity functors on basis morphisms into simpler operations on the generating morphism spaces $\Hm ijk$. To this end we introduce two operations $L$ and $R$ which in the diagrammatic description amount to gluing unit-bounded faces:

\begin{defi}\label{def:LR-0}
(i) The operations $L$ and $R$ on the morphism spaces $\Hd ijk \eq \Hom(k,i \oti j)$ and $\Hm ijk \eq \Hom(i \oti j,k)$, for $(i,j,k) \,{\in}\, \S^{\times 3}$, are given by taking the tensor product with the one-dimensional spaces $\Hm \ib i \OO$ and $\Hm j\jb \OO$, respectively with $\Hd \ib i \OO$ and $\Hd j\jb \OO$, according to the diagrammatic prescription
  \begin{align}
  \raisebox{-2.9em}{ \simptex{
  \internaltwosimplex[][i,j,k]
  \ddraw (-1,1.4) to["\ensuremath{\bar{i}}" swap] (nx.center);
  \draw (-1,1.4) to["1"] (ny.center);
  } }
~\stackrel{\,\displaystyle L}\longmapsfrom~
  \raisebox{-2.9em}{ \twosimplex[][i,j,k] }
~\stackrel{\displaystyle R\,}\longmapsto~
  \raisebox{-2.9em}{ \simptex{
  \internaltwosimplex[][i,j,k]
  \ddraw (nz.center) to["\ensuremath{\bar{j}}" swap] (3,1.4);
  \draw (3,1.4) to["1" swap] (ny.center);
  } }
  \label{eq:def:LR-on-k-ij}
  \end{align}
and
  \begin{align}
  \raisebox{-2.9em}{ \simptex{
\ddraw (0,0) to["$k$"] (2,0);
\ddraw (0,0) to["$i$" swap] (1,-1.4);
\ddraw (1,-1.4) to["$j$" swap] (2,0);
\ddraw (-1,-1.4) to["\ensuremath{\bar{i}}"] (0,0);
\draw (-1,-1.4) to["1" swap] (1,-1.4);
  } }
~~~\stackrel{\,\displaystyle L}\longmapsfrom~~~
  \raisebox{-2.9em}{ \simptex[baseline=-20mm]{
\ddraw (0,0) to["$k$"] (2,0);
\ddraw (0,0) to["$i$" swap] (1,-1.4);
\ddraw (1,-1.4) to["$j$" swap] (2,0);
   } }
~~~\stackrel{\displaystyle R\,}\longmapsto~~~
  \raisebox{-2.9em}{ \simptex{
\ddraw (0,0) to["$k$"] (2,0);
\ddraw (0,0) to["$i$" swap] (1,-1.4);
\ddraw (1,-1.4) to["$j$" swap] (2,0);
\ddraw (2,0) to["\ensuremath{\bar{j}}"] (3,-1.4);
\draw (3,-1.4) to["1"] (1,-1.4);
  } }
  \label{eq:def:LR-on-ij-k}
  \end{align}
respectively.
We call the operation $L$ the \textit{partial left dual} and $R$ the \textit{partial right dual}. 
\\[2pt]
(ii) The \textit{framed partial left dual} and \textit{framed partial right dual}, also to be denoted by $L$ and $R$, respectively, are the linear maps that are obtained analogously when instead acting with chosen basis vectors in the one-dimensional spaces that are being glued in the prescriptions \eqref{eq:def:LR-on-k-ij} and \eqref{eq:def:LR-on-ij-k}.
\end{defi}
 
\begin{conv}
As the monoidal unit $1$ is taken to be strict, in the diagrammatic description we draw edges labeled by the object $1$ without orientation. Moreover, from now on the label of such unoriented edges will be omitted.
\end{conv}

Let us exhibit the matrix coefficients of the linear maps $L$ and $R$. Denote the component of $L$ that acts on the space $\Hm ijk$ by $L_{ij}^k$ and the one acting on $\Hd ijk$ by $L^{ij}_k$, and similarly for $R$. Then we have (omitting the labeling of the trivial faces by their unique basis morphisms)
  \begin{align}
  L_{ij}^k(\alpha) \,= \simptex[baseline=5mm,scale=0.8]{
    \ddraw (0,0) to["$\Bar{i}$"] (1,1.4);
    \ddraw (1,1.4) to["$i$" swap] (2,0);
    \draw (0,0) to (2,0);
    \ddraw (2,0) to["$j$" swap] (3,1.4);
    \ddraw (1,1.4) to["$k$"] (3,1.4);
    \draw (2,0.8) node{$\alpha$};
    } =\, \sum_\delta \F{\Bar i}ijjk1\delta\alpha\oo\oo \Bar\delta \,,
    &&
  L^{ij}_k(\Bar\alpha) \,= \simptex[baseline=5mm,scale=0.8]{\internaltwosimplex[][i,j,k][\alpha]
    \draw (1,1.4) to (-1,1.4);
    \ddraw (-1,1.4) to["$\Bar i$" swap] (0,0);
    } \!=\, \sum_\delta \G{\Bar i}ijj1k\oo\oo\delta\alpha \delta
  \label{eq:defPandQ-simplicesL}
  \end{align}
and
  \begin{align}
  R_{ij}^k(\alpha) \,= \simptex[baseline=5mm,scale=0.8]{
    \ddraw (3,1.4)  to["$\Bar{j}$"] (4,0);
    \ddraw (1,1.4) to["$i$" swap] (2,0);
    \draw (4,0) to (2,0);
    \ddraw (2,0) to["$j$" swap] (3,1.4);
    \ddraw (1,1.4) to["$k$"] (3,1.4);
    \draw (2,0.8) node{$\alpha$};
    } =\, \sum_\delta \G ij{\Bar j}ik1\delta\alpha\oo\oo \Bar\delta \,,
    &&
  R^{ij}_k(\Bar\alpha) \,= \!\!\simptex[baseline=5mm,scale=0.8]{\internaltwosimplex[][i,j,k][\alpha]
    \draw (1,1.4) to (3,1.4);
    \ddraw (2,0) to["$\Bar j$" swap] (3,1.4);
    } =\, \sum_\delta \F ij{\Bar j}i1k\oo\oo\delta\alpha \delta \,.
  \label{eq:defPandQ-simplicesR}
  \end{align}

The partial duals are obviously involutive as operations on the fundamental morphism spaces. In contrast, the \emph{framed} partial duals are, in general, \emph{not} involutive. However, $L$ and $R$ do become involutive as linear maps if we modify them by suitable scalar factors, namely if we replace the basis morphisms in the prescription in Definition \ref{def:LR-0}(ii) by rigidity morphisms,\,%
 \footnote{~Owing to $(\oocap i)^\cap \,{=}\, \oocup i$ these form a rigidity-dual pair rather than a covector-dual pair, so one has to be careful when using them in the diagrammatic semantics.} 
according to the following prescription:

\begin{defi} \label{def:LL,RR}
The \emph{modified left and right framed partial duals}, denoted by $\LL$ and $\RR$, are the maps obtained from $L$ and $R$ by using the morphism $\oocup i$ in place of $\oob i$ and $\oocapp i$ in place of $\oodd i$. Thus diagrammatically we have
  \begin{align}
  \LL_{ij}^k(\alpha) ~=~~ \simptex[baseline=5mm,scale=0.8]{
    \ddraw (0,0) to["$\Bar{i}$"] (1,1.4);
    \ddraw (1,1.4) to["$i$" swap] (2,0);
    \draw (0,0) to (2,0);
    \ddraw (2,0) to["$j$" swap] (3,1.4);
    \ddraw (1,1.4) to["$k$"] (3,1.4);
    \draw (2,0.85) node{$\alpha$};
  \draw (0.95,0.6) node{$\oocup{}$};
    }
  \qquad\text{and}\qquad
  \RR_{ij}^k(\alpha) ~=~~ \simptex[baseline=5mm,scale=0.8]{
    \ddraw (3,1.4)  to["$\Bar{j}$"] (4,0);
    \ddraw (1,1.4) to["$i$" swap] (2,0);
    \draw (4,0) to (2,0);
    \ddraw (2,0) to["$j$" swap] (3,1.4);
    \ddraw (1,1.4) to["$k$"] (3,1.4);
    \draw (2,0.85) node{$\alpha$};
  \draw (2.95,0.6) node{$\oocupp{}$};
    }
  \end{align}
and analogously for $\LL^{ij}_k(\Bar\alpha)$ and $\RR^{ij}_k(\Bar\alpha)$.
\end{defi}
  
The matrix coefficients of $\LL$ and $\RR$ are given by
  \be
  \begin{array}{llll}
  & \LL_{ij}^k = \mu_i\, L_{ij}^k & ~~\text{and}~~ & \dsty
  \LL^{ij}_k = \frac1{\mu_\ib\, \Fo i}\, \, L^{ij}_k
  \Nxl3
  \text{and by} \quad
  & \RR_{ij}^k = \mu_\jb\, R_{ij}^k & ~~\text{and}~~ & \dsty
  \RR^{ij}_k = \frac1{\mu_j\, \Fob j}\, \, R^{ij}_k \,,
  \label{LvsLL,RvsRR}
  \eear
  \ee
respectively. 

\begin{lem} \label{lem:Lq**2,R**2}
The modified framed partial duals square to the identity: we have
  \be
  \LL^2 = \id = \RR^2 
  \label{eq:LL2=1=RR2}
  \ee
as linear maps on each of the fundamental morphism spaces.
\end{lem}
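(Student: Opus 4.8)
The plan is to evaluate $\LL^2$ and $\RR^2$ diagrammatically and recognise the outcome as the snake identities of the rigidity structure built in Proposition~\ref{prop:gusion-fusion}. A preliminary remark on what the statement means: $\LL$ and $\RR$ are not endomorphisms of a single space. On a fundamental space $\Hm ijk$ the map $\LL$ takes values in a space of the form $\Hd{\ib}{k}{j}$, and it likewise acts between the matching pair of $\Hd$-spaces; similarly $\RR$ interchanges $\Hm ijk$ with the space obtained by moving the $j$-leg across as a $\jb$-leg. So ``$\LL^2=\id$'' means that for every fundamental space $V$ the composite $V\xrightarrow{\,\LL\,}\LL(V)\xrightarrow{\,\LL\,}V$ equals $\id_V$, and likewise for $\RR$. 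Because $\dim_\ko\Hm ijk=\dim_\ko\Hd{\ib}{k}{j}$, it is enough to treat the composite that starts on an $\Hm$-space; the composite that starts on the matching $\Hd$-space is then automatically the identity as well.

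First I would strip off the scalars. By \eqref{LvsLL,RvsRR}, together with $\Fo{\ib}=\Fob i$, the composite on $\Hm ijk$ is
\begin{align*}
\LL^{\ib k}_j\circ\LL_{ij}^k
=\frac1{\mu_i\,\Fob i}\,L^{\ib k}_j\circ\bigl(\mu_i\,L_{ij}^k\bigr)
=\frac1{\Fob i}\,L^{\ib k}_j\circ L_{ij}^k\,,
\end{align*}
and analogously the $\RR$-composite equals $\tfrac1{\Fo j}$ times a self-composite of unframed partial right duals. So the claim reduces to showing that the \emph{unframed} partial dual squares to $\Fob i$ (resp.\ $\Fo j$) times the identity on the relevant fundamental space. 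Diagrammatically, by Definitions~\ref{def:LR-0} and \ref{def:LL,RR} this iterated unframed map glues, to a basis morphism $\alpha\in\Hm ijk$, two unit-bounded faces along the edge labelled $i$; their composite is a zig-zag on that edge, and straightening it uses only the snake relation $(\oodd i\oti\id_i)\circ(\id_i\oti\oob i)=\Go i\,\id_i$ recorded in Section~\ref{sec:covector2rigid}. Hence $L^{\ib k}_j\circ L_{ij}^k=\Go i\,\id=\Fob i\,\id$ (using $\Go i=\Fob i$), and the prefactor $\tfrac1{\Fob i}$ cancels it, giving $\LL^2=\id$; the argument for $\RR$ is identical with the $i$-edge replaced by the $j$-edge and with the relation $(\id_j\oti\ood j)\circ(\oobb j\oti\id_j)=\Fo j\,\id_j$ in place of the one above. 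Equivalently, and more conceptually: in the modified maps $\LL$ and $\RR$ the glued faces are precisely the rigidity morphisms $\oocup i,\oocapp i$ of Definition~\ref{def:cupcap} (and their analogues for $\RR$), and those were introduced so that the zig-zag straightens with \emph{no} residual scalar --- i.e.\ so that the snake identities of Proposition~\ref{prop:gusion-fusion} hold on the nose, which is exactly $\LL^2=\id=\RR^2$.

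A purely computational cross-check would be to substitute the coefficient formula \eqref{eq:defPandQ-simplicesL} into itself (and \eqref{eq:defPandQ-simplicesR} likewise): the iterated map becomes a double sum of a product of one $\FF$-symbol and one $\GG$-symbol, both of the special ``snake'' shape, and this product collapses to $\Fob i$ (resp.\ $\Fo j$) times a Kronecker delta because $\GG=\FF^{-1}$ and because $\FF$-symbols having a leg equal to the monoidal unit are normalised to identity matrices (see Appendix~\ref{sec:app6j}); the prefactor from the first step then cancels it. The one step I expect to require care, in either route, is the orientation bookkeeping: checking that the two inserted faces really assemble into a straightenable zig-zag rather than a closed loop, and that the legs of the intermediate space match up so that \eqref{LvsLL,RvsRR} may be reapplied with the index data $(\ib,k,j)$ (and correspondingly for $\RR$). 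This is entirely mechanical given the conventions of Section~\ref{sec:semantics4gusion}, and I do not anticipate any conceptual obstacle.
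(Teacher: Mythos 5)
Your proposal is correct and follows essentially the same route as the paper: both compute the unframed squares $L^2$ and $R^2$ as the scalars $\Go i$ (resp.\ $\Fo j$) times the identity via the zig-zag/snake relations on the glued unit-bounded faces, and then observe that the rescaling \eqref{LvsLL,RvsRR} built into $\LL$ and $\RR$ cancels these factors exactly. Your shortcut of deducing the $\Hd$-cases from the $\Hm$-cases by equality of dimensions is a harmless minor variation (the paper just does both zig-zags explicitly), and the only soft spot is the parenthetical claim that the $\FF$--$\GG$ product in the coefficient cross-check collapses ``because $\GG=\FF^{-1}$'' --- the two symbols there belong to different $(i\,j\,k)\,l$ quadruples, so that collapse is really the snake identity again rather than literal matrix inversion; but this is only your optional cross-check, not the main argument.
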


\begin{proof}
It is instructive to exhibit how the result arises when starting out with the original linear maps $L$ and $R$. By definition, applying $R$ to $\Hd ijk$ twice, or $L$ to $\Hm jik$ twice, amounts to gluing with the diagram 
  \begin{align}
  \raisebox{-3.2em}{
  \begin{tikzpicture}[line width=\simptexthickness,rounded corners,decoration={markings,mark=at position 0.46 with {\arrowreversed{Stealth[length=6pt,width=6pt]}}}]
  \ddraw (1,1.4) to ["$j$" swap] (0,0);
  \draw (1,1.4) to["$1$"] (2,0);
  \ddraw (0,0) to ["$\bar{j}$"] (2,0);
  \ddraw (2,0) to["\ensuremath{j}"] (1,-1.4);
  \draw (0,0) to["1" swap] (1,-1.4);
  \end{tikzpicture}
  }
  \quad =~ \Go j ~~
  \raisebox{-3.2em}{
  \begin{tikzpicture}[line width=\simptexthickness,rounded corners,decoration={markings,mark=at position 0.46 with {\arrowreversed{Stealth[length=6pt,width=6pt]}}}]
  \ddraw (1,1.4) to ["$j$" swap] (0,0);
  \draw (1,1.4) to (2,0);
  \ddraw (1,1.4) to ["$j$"] (1,-1.4);
  \draw (1,-1.4) to (0,0);
  \ddraw (2,0) to["\ensuremath{j}"] (1,-1.4);
  \end{tikzpicture}
  }
  \end{align}
Using further that the monoidal unit is strict, this yields
  \begin{align}
  R^2\big|_{\Hd ijk}^{} = \Go j\, \id_{\Hd ijk}^{} \qquad \text{and} \qquad 
  L^2\big|_{\Hm jik}^{} = \Go j\, \id_{\Hm jik}^{} \,.
  \label{eq:R**2,L**2}
  \end{align}
Moreover, replacing $L$ and $R$ according to \eqref{LvsLL,RvsRR} by $\LL$ and $\RR$, respectively, precisely cancels the prefactor $\Go j$, and thus shows \eqref{eq:LL2=1=RR2} for $\RR^2$ acting on $\Hd ijk$ and for $\LL^2$ acting on $\Hm jik$.
 \\[2pt]
Analogously we have
  \begin{align}
  \raisebox{-3.2em}{
  \begin{tikzpicture}[line width=\simptexthickness,rounded corners,decoration={markings,mark=at position 0.46 with {\arrowreversed{Stealth[length=6pt,width=6pt]}}}]
  \draw (1,1.4) to ["$ $" swap] (0,0);
  \ddraw (2,0) to["$i$" swap] (1,1.4);
  \draw (2,0) to[""] (1,-1.4);
  \ddraw (1,-1.4) to["\ensuremath{i}"] (0,0);
  \ddraw (0,0) to ["$\bar{i}$"] (2,0);
  \end{tikzpicture}
  }
  \quad =~ \Fo i ~~
  \raisebox{-3.2em}{
  \begin{tikzpicture}[line width=\simptexthickness,rounded corners,decoration={markings,mark=at position 0.46 with {\arrowreversed{Stealth[length=6pt,width=6pt]}}}]
  \draw (1,1.4) to ["$ $" swap] (0,0);
  \ddraw (2,0) to["$i$" swap] (1,1.4);
  \draw (2,0) to[""] (1,-1.4);
  \ddraw (1,-1.4) to["\ensuremath{i}"] (0,0);
  \ddraw (1,-1.4) to ["$i$"] (1,1.4);
  \end{tikzpicture}
  }
  \end{align}
and thus
  \begin{align}
  L^2\big|_{\Hd ijk}^{} = \Fo i\, \id_{\Hd ijk}^{} \qquad \text{and} \qquad 
  R^2\big|_{\Hm jik}^{} = \Fo i\, \id_{\Hm jik}^{} \,.
  \label{eq:L**2,R**2}
  \end{align}
Again upon replacing $L$ and $R$ by $\LL$ and $\RR$, the prefactor $\Fo i$ is
canceled, showing that \eqref{eq:LL2=1=RR2} also holds for $\LL^2$ acting on $\Hd ijk$ and for $\RR^2$ acting on $\Hm jik$.
\end{proof}

\begin{rem}
(i) 
The proof of Lemma \ref{lem:Lq**2,R**2} shows that the statement boils down to the snake
identities for the rigidities $(-)^\cupp$ and $\Cupp(-)$. Diagrammatically it reads
  \begin{align}
  \raisebox{-1.6em}{
  \begin{tikzpicture}[line width=\simptexthickness,rounded corners,decoration={markings,mark=at position 0.46 with {\arrowreversed{Stealth[length=6pt,width=6pt]}}}]
  \node (nx) at (0,0) {};%{$#1$};
  \node (ny) at (1,1.4) {};%{$#2$};
  \node (nz) at (2,0) {};%{$#3$};
  \draw[postaction=decorate] (nx.center)  to[edge label={\ensuremath{i}}] (ny.center);
  \draw[postaction=decorate] (ny.center)  to[edge label={\ensuremath{\bar{i}}}] (nz.center);
  \draw (nx.center)  to (nz.center);
    \draw (1,1.4) to (3,1.4);
    \ddraw (2,0) to["$i$" swap] (3,1.4);
    \draw (2,0.8) node{$\oocap i$};
    \draw (1,0.6) node{$\oocapp i$};
    \end{tikzpicture}
  }
  =~ \id_i \qquad \text{and} \qquad
  \raisebox{-1.6em}{
  \begin{tikzpicture}[line width=\simptexthickness,rounded corners,decoration={markings,mark=at position 0.46 with {\arrowreversed{Stealth[length=6pt,width=6pt]}}}]
  \node (nx) at (0,0) {};%{$#1$};
  \node (ny) at (1,1.4) {};%{$#2$};
  \node (nz) at (2,0) {};%{$#3$};
  \draw[postaction=decorate] (nx.center)  to[edge label={\ensuremath{i}}] (ny.center);
  \draw[postaction=decorate] (ny.center)  to[edge label={\ensuremath{\bar{i}}}] (nz.center);
  \draw (nx.center)  to (nz.center);
    \draw (-1,1.4) to (1,1.4);
    \ddraw (-1,1.4) to["$\bar{i}$" swap] (0,0);
    \draw (0,0.8) node{$\oocap i$};
    \draw (1,0.6) node{$\oocapp i$};
    \end{tikzpicture}
  }
  =~ \id_\ib \,.
  \end{align}
(ii) The operations $L$ and $R$ make sense for any \gusion category \D, whereas $\LL$ and $\RR$ are only defined for non-degenerate \D. However, as illustrated by the presence of the factors $\Go j$ in \eqref{eq:R**2,L**2} and $\Fo i$ in \eqref{eq:L**2,R**2} in the degenerate case $L$ and $R$ are uninteresting.
\end{rem}

We now describe the relation between $\LL$ and $\RR$ and the rigidity functors. By direct calculation one checks:

\begin{lem} \label{lem:cupp-va-LLRR}
The left and right rigidity duals can be expressed as combinations of the modified framed partial duals $\LL$ and $\RR$ according to
  \be
  \begin{array}{lll}
  & \alpha^\cupp = \RR\LL\RR (\alpha) \,, \quad
  & \cc\alpha^\cupp = \LL\RR\LL (\cc\alpha)
  \Nxl3
  \text{and as} \quad & {}^{\cupp\!}\alpha  = \LL\RR\LL (\alpha) \,,
  & {}^\cupp\cc\alpha = \RR\LL\RR (\cc\alpha) \,,
  \eear
  \label{eq:duals-as-LRLorRLR}
  \ee
respectively, for $\alpha \iN \Hm ijk$. 
In particular, the two composites $\RR\LL\RR$ and $\LL\RR\LL$ result in the same morphism spaces, and they are equal as linear maps if the left and right dimensions coincide. 
\end{lem}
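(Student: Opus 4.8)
The plan is to carry out the ``direct calculation'' entirely in the simplicial (equivalently, string-diagram) calculus of Section~\ref{sec:semantics4gusion}, reading the rigidity dual of a basic morphism as a composite of three elementary ``bends'', each of which is one application of $\LL$ or $\RR$. Concretely, for $\alpha \iN \Hm ijk \eq \Hom(i\oti j,k)$ I would first unfold the definition \eqref{eq:def:fcupp} of $\alpha^\cupp$ with $x \eq i\oti j$ and $y \eq k$ (so $\Bar x \eq \Bar j\oti\Bar i$ and $\Bar y \eq \Bar k$), using the tensor-product extension \eqref{eq:ood(i.j)} to write the rescaled (co)evaluation for $i\oti j$ as a nested pair built from those for $i$ and $j$. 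The resulting diagram then exhibits $\alpha^\cupp$ as: first fold the $j$-leg of $\alpha$ across a unit-bounded face — this is exactly $\RR$ acting on $\Hm ijk$, with image in $\Hd k{\Bar j}i$; next fold the $k$-leg — this is $\LL$ acting on $\Hd k{\Bar j}i$, with image in $\Hm{\Bar k}i{\Bar j}$; finally fold the remaining $i$-leg — $\RR$ again, landing in $\Hd{\Bar j}{\Bar i}{\Bar k}$, i.e.\ in the space $\Hom(\Bar k,\Bar j\oti\Bar i)$ in which $\alpha^\cupp$ lives. Reading this off gives $\alpha^\cupp \eq \RR\LL\RR(\alpha)$; interchanging the roles of domain and codomain legs (hence of left and right) gives ${}^{\cupp\!}\alpha \eq \LL\RR\LL(\alpha)$, and repeating the computation with $\alpha$ replaced by its covector dual $\cc\alpha$ gives the two remaining identities in \eqref{eq:duals-as-LRLorRLR}.

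Two bookkeeping points then have to be dispatched. First, I would check that the scalars telescope: by \eqref{LvsLL,RvsRR} each application of $\LL$ or $\RR$ differs from the corresponding unframed bend by one of the factors $\mu_i$, $1/(\mu_\ib\Fo i)$, $\mu_\jb$, $1/(\mu_j\Fob j)$, while the iterated prescriptions \eqref{eq:def:cup} and \eqref{eq:ood(i.j)} carry the matching products of the $\mu_i$ and the numbers $\Fo i$, $\Fob i$ inside the (co)evaluations that appear in \eqref{eq:def:fcupp}; comparing them shows that in the composite $\RR\LL\RR$ all of these factors cancel, so the identity holds on the nose and not merely up to a scalar. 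Second, the glued $\one$-edges disappear because the monoidal unit is strict, and the triangular faces of the stacked simplices assemble into the standard diagram for \eqref{eq:def:fcupp} because the only moves required are the snake (zig-zag) identities for $(-)^\cupp$ and $\Cupp(-)$, which hold by Proposition~\ref{prop:gusion-fusion} and already underlie the computation in Lemma~\ref{lem:Lq**2,R**2}.

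For the final assertion, note that both $\RR\LL\RR$ and $\LL\RR\LL$ send $\Hm ijk$ into $\Hom(\Bar k,\Bar j\oti\Bar i)$, since in a \ndgusion category the left and right duals of every object coincide, so the comparison is meaningful; and by the above they compute $\alpha \mapsto \alpha^\cupp$ and $\alpha \mapsto {}^{\cupp\!}\alpha$ respectively. When $\dim_L \eq \dim_R$, Lemma~\ref{lem:1piv}(ii) gives $\alpha^{\cupp\cupp} \eq {}^{\cupp\cupp}\alpha$ for every basic $\alpha$, and I would deduce $\alpha^\cupp \eq {}^{\cupp\!}\alpha$ from this via \eqref{ufu=f} — alternatively, directly from $f^\cupp \eq (\Fx y)^{-1} f^\wee$, $\CUpp f \eq (\Fx{\Bar y})^{-1}\WEe f$ together with the vanishing of all relative dimensions $\dd x$ — whence $\RR\LL\RR \eq \LL\RR\LL$ on each basic morphism space. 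The hard part is not any single step but the combinatorial bookkeeping: keeping straight which leg each of the three bends acts on and in which direction across the dualizations, and then following the four scalar factors carefully enough to confirm that they telescope to $1$.
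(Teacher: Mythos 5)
Your treatment of the four identities \eqref{eq:duals-as-LRLorRLR} is correct and is essentially the paper's own ``direct calculation'': unfolding \eqref{eq:def:fcupp} for $\alpha\iN\Hm ijk$ via the iterated (co)evaluations \eqref{eq:ood(i.j)} and reading off the three bends as $\RR$, $\LL$, $\RR$ through the chain of spaces $\Hm ijk\To\Hd k{\jb}i\To\Hm{\kb}i{\jb}\To\Hd{\jb}{\ib}{\kb}$ is precisely what the diagrams \eqref{simp:QPQ} and \eqref{simp:PQP} following the lemma depict, and the telescoping of the $\mu$- and $\Fo{}$-factors is exactly what the rescalings \eqref{LvsLL,RvsRR} and \eqref{eq:def:cup} are designed to produce.

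Your justification of the final assertion, however, does not go through. From $\alpha^{\cupp\cupp}\eq{}^{\cupp\cupp}\alpha$ together with \eqref{ufu=f} you get ${}^\cupp(\alpha^{\cupp\cupp})\eq\alpha^\cupp$ on one side but ${}^\cupp({}^{\cupp\cupp}\alpha)\eq{}^{\cupp\cupp\cupp}\alpha$ on the other, i.e.\ $\alpha^\cupp\eq{}^{\cupp\cupp\cupp}\alpha$ --- not $\alpha^\cupp\eq{}^{\cupp\!}\alpha$. Indeed, since ${}^\cupp(-)$ and $(-)^\cupp$ are mutually inverse by \eqref{ufu=f}, the equality $\RR\LL\RR(\alpha)\eq\LL\RR\LL(\alpha)$ is \emph{equivalent} to ${}^{\cupp\cupp}\alpha\eq\alpha$, and by \eqref{eq:doubleduals-diag} this requires, beyond the triviality of all relative dimensions $\dd x$, that the signs $\eps_\alpha$ all equal $+1$; this is consistent with Proposition \ref{prop:genuineS3}, which makes $\LL\RR\LL\eq\RR\LL\RR$ equivalent to $(-)^{\cupp\cupp}\eq\Id$. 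So coincidence of left and right dimensions alone does not suffice (note that the lemma's own closing sentence, read literally, is in tension with Proposition \ref{prop:genuineS3} on exactly this point, so you should not expect to be able to prove it from that hypothesis alone). Your fallback argument via $f^\cupp\eq(\Fx y)^{-1}f^\wee$ and $\Cupp f\eq(\Fx{\bar y})^{-1}\,\Wee f$ has the same defect, since $\alpha^\wee$ and $\Wee\alpha$ likewise differ by the double ($\wee$-)dual; and ``vanishing'' of the $\dd x$ should in any case read $\dd x\eq1$.
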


Diagrammatically the action of these combinations of $\LL$ and $\RR$ on $\Hd ijk$ is given by
  \begin{align}
  \raisebox{-2.7em}{ \twosimplex[][i,j,k] }
  \quad \xmapsto{~ \RR\LL\RR ~} \quad
  \raisebox{-3.2em}{
  \begin{tikzpicture}[line width=\simptexthickness,rounded corners,decoration={markings,mark=at position 0.46 with {\arrowreversed{Stealth[length=6pt,width=6pt]}}}]
  \internaltwosimplex[][i,j,k]
  \ddraw (nz.center) to["\ensuremath{\bar{j}}" swap] (3,1.4);
  \draw (3,1.4) to (ny.center);
  \draw (2,0) to[""] (1,-1.4);
  \ddraw (1,1.4) to["$\bar i$" swap] (-1,1.4);
  \ddraw (1,-1.4) to["$\bar k$"] (0,0);
  \draw (-1,1.4) to[""] (0,0);
  \end{tikzpicture}
  }
  ~~ \cong ~
  \raisebox{-2.7em}{
  \twosimplex[][\bar{i},\bar{j},\bar{k}][][line width=\simptexthickness,rounded corners,decoration={markings,mark=at position 0.58 with {\arrow{Stealth[length=6pt,width=6pt]}}}]
  }
  \label{simp:QPQ}
  \\
  \raisebox{-2.7em}{ \twosimplex[][i,j,k] }
  \quad \xmapsto{~ \LL\RR\LL ~} \quad
  \raisebox{-3.2em}{
  \begin{tikzpicture}[line width=\simptexthickness,rounded corners,decoration={markings,mark=at position 0.46 with {\arrowreversed{Stealth[length=6pt,width=6pt]}}}]
  \internaltwosimplex[][i,j,k]
  \draw (nz.center) to[" " swap] (3,1.4);
  \ddraw (3,1.4) to["\ensuremath{\bar{j}}" swap] (ny.center);
  \draw (0,0) to[" " swap] (1,-1.4);
  \draw (1,1.4) to[" " swap] (-1,1.4);
  \ddraw (2,0) to["$\bar k$"] (1,-1.4);
  \ddraw (-1,1.4) to["$\bar i$" swap] (0,0);
  \end{tikzpicture}
  }
  ~~ \cong ~
  \raisebox{-2.7em}{
  \twosimplex[][\bar{i},\bar{j},\bar{k}][][line width=\simptexthickness,rounded corners,decoration={markings,mark=at position 0.58 with {\arrow{Stealth[length=6pt,width=6pt]}}}]
  \label{simp:PQP}
  }
  \end{align}
and analogously for the action on $\Hm ijk$.

For the double dual, the equalities \eqref{eq:duals-as-LRLorRLR} mean
  \be
  \alpha^{\cupp\cupp} = (\LL\RR)^3 (\alpha) \qquand
  {}^{\cupp\cupp\!}\alpha = (\RR\LL)^3 (\alpha) \,.
  \label{eq:dbleduals-as-LR3orRL3}
  \ee
Thus explicitly we have
  \be
  (\LL\RR)^3(\alpha) 
  \equ{eq:alphabarcuppcupp} \frac{\di i\, \di j} {\di k} \, \MM\, \alpha
  \equ{eq:MM2FGversions} \frac{\di i\, \di j} {\di k} \sum_{\beta,\mu}
  \F ij\jb i\OO k \oo\oo\beta\mu\, \G ij\jb ik\OO \alpha\mu\oo\oo \, \beta
  \label{eq:(RL)3-form1rescaled}
  \ee
for $\alpha \iN \Hm ijk$, and similarly for $(\RR\LL)^3$.

Combining \eqref{eq:dbleduals-as-LR3orRL3} with Lemma \ref{lem:Lq**2,R**2} we arrive at

\begin{prop}\label{prop:genuineS3}
The linear maps $\LL$ and $\RR$ generate a genuine action of the symmetric group \SSS\ on the collection of morphism spaces $\Hm ijk$ and $\Hd ijk$ if and only if the double dual $(-)^{\cupp\cupp}$ is the identity as a functor.
\end{prop}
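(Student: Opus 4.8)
The plan is to recognize the proposition as a statement about the single relation $(\LL\RR)^3\eq\id$, using the standard presentation $\SSS\eq\langle s,t\mid s^2\eq t^2\eq(st)^3\eq e\rangle$ of the symmetric group on three letters by two adjacent transpositions. I would regard $\LL$ and $\RR$ as linear endomorphisms of the single vector space $\mathcal H\df\bigoplus_{i,j,k\in\S}\big(\Hm ijk\oplus\Hd ijk\big)$ — they do map each fundamental morphism space to another one, so this is legitimate. Lemma \ref{lem:Lq**2,R**2} already supplies the relations $\LL^2\eq\id\eq\RR^2$ on $\mathcal H$. Hence the assignment $s\mapsto\LL$, $t\mapsto\RR$ extends to a genuine action of $\SSS$ on $\mathcal H$ if and only if in addition $(\LL\RR)^3\eq\id_{\mathcal H}$; and since $\RR$ is an involution one has $(\RR\LL)^3\eq\RR\,(\LL\RR)^3\,\RR$, so this is equivalent to $(\RR\LL)^3\eq\id_{\mathcal H}$. (The resulting action is then automatically faithful, as $\LL$ and $\RR$ are visibly distinct, but this need not be spelled out.) So the whole content reduces to showing that $(\LL\RR)^3\eq\id_{\mathcal H}$ if and only if $(-)^{\cupp\cupp}\eq\Id$ as a functor.

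Second, I would identify the cubes $(\LL\RR)^3$ and $(\RR\LL)^3$ on the two kinds of fundamental spaces. On the spaces $\Hm ijk$ this is exactly \eqref{eq:dbleduals-as-LR3orRL3}: there $(\LL\RR)^3\eq(-)^{\cupp\cupp}$ and $(\RR\LL)^3\eq{}^{\cupp\cupp\!}(-)$. On the spaces $\Hd ijk$ one redoes the same two-line computation: by Lemma \ref{lem:cupp-va-LLRR}, $\cc\alpha^\cupp\eq\LL\RR\LL(\cc\alpha)$ lands in a space of $\Hm$-type, to which one then applies the $\Hm$-type formula $\beta^\cupp\eq\RR\LL\RR(\beta)$ of the same lemma, giving $\cc\alpha^{\cupp\cupp}\eq(\RR\LL)^3(\cc\alpha)$ and, symmetrically, ${}^{\cupp\cupp\!}\cc\alpha\eq(\LL\RR)^3(\cc\alpha)$. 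This is the one place that needs care rather than hand-waving: a single rigidity dual interchanges $\Hm$-type and $\Hd$-type spaces, and tracking that interchange through the iteration is precisely what makes $(\LL\RR)^3$ compute the \emph{right} double dual on $\Hm ijk$ but the \emph{left} double dual on $\Hd ijk$. Combining the four identities, $(\LL\RR)^3\eq\id_{\mathcal H}\eq(\RR\LL)^3$ is equivalent to the equalities $\alpha^{\cupp\cupp}\eq\alpha$ and ${}^{\cupp\cupp\!}\alpha\eq\alpha$ for every basis morphism $\alpha$ of every space $\Hm ijk$, together with $\cc\alpha^{\cupp\cupp}\eq\cc\alpha$ and ${}^{\cupp\cupp\!}\cc\alpha\eq\cc\alpha$ for the covector-dual bases.

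Third, I would close the equivalence with the condition that $(-)^{\cupp\cupp}$ be the identity as a functor. If $\LL$ and $\RR$ generate a genuine $\SSS$-action, then in particular $\alpha^{\cupp\cupp}\eq\alpha$ for every basis morphism of every $\Hm ijk$; since these morphisms generate all morphisms of \D\ under composition, tensor product and direct sums, and $(-)^{\cupp\cupp}$ is a monoidal functor acting as the identity on objects, it follows that $(-)^{\cupp\cupp}\eq\Id$. Conversely, suppose $(-)^{\cupp\cupp}\eq\Id$, so $\alpha^{\cupp\cupp}\eq\alpha$ for all basis morphisms. By \eqref{ufu=f} the operations $(-)^\cupp$ and $\Cupp(-)$ are mutually inverse, hence so are $(-)^{\cupp\cupp}$ and ${}^{\cupp\cupp\!}(-)$; therefore ${}^{\cupp\cupp\!}\alpha\eq{}^{\cupp\cupp\!}(\alpha^{\cupp\cupp})\eq\alpha$ as well, and by Lemma \ref{lem:1piv}(ii) both equalities pass to the covector-dual bases $\cc\alpha$. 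By the reformulation of the second step this yields $(\LL\RR)^3\eq\id_{\mathcal H}$, i.e.\ a genuine $\SSS$-action.

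Beyond the $\Hm$-versus-$\Hd$ bookkeeping flagged in the second step, I do not expect any real obstacle: all of the substantive input — involutivity of $\LL$ and $\RR$, and the expression of the two double-dual functors as $(\LL\RR)^3$ and $(\RR\LL)^3$ — is already available from Lemmas \ref{lem:Lq**2,R**2} and \ref{lem:cupp-va-LLRR} together with \eqref{eq:dbleduals-as-LR3orRL3}, and the remainder of the argument is purely formal.
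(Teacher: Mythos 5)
Your argument is correct and follows the same route as the paper, which obtains the proposition by combining Lemma \ref{lem:Lq**2,R**2} with the identification \eqref{eq:dbleduals-as-LR3orRL3} of $(\LL\RR)^3$ and $(\RR\LL)^3$ with the double-dual functors --- exactly your reduction via the Coxeter presentation of \SSS. The additional care you take in tracking the alternation between the two types of fundamental morphism spaces under iterated dualization, and in passing from the statement on basis morphisms to the statement about the functor, only spells out what the paper leaves implicit.
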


Let us also provide alternative expressions for the linear maps $(\RR\LL)^3$ and $(\LL\RR)^3$, which are obtained by iterating the maps $\RR\LL$ and $\LL\RR$, respectively.  
It turns out to be notationally convenient to choose object labels such that $\RR\LL$ and $\LL\RR$ are linear maps $\Hm ij{\kb} \,{\to}\, \Hm jk{\ib}$ and $\Hm ij{\kb} \,{\to}\, \Hm ki{\jb}$, respectively. Doing so we obtain
  \begin{align}
  RL(\alpha) ~=~  
  \simptex[baseline=9mm,scale=0.8]{
  \ddraw (0,0) to["$\ib$"] (1,1.4);
  \ddraw (1,1.4) to["$i$"] (2,0);
  \draw (0,0) to (2,0);
  \ddraw (2,0) to["$j$" swap] (3,1.4);
  \ddraw (1,1.4) to["$\kb$"] (3,1.4);
  \draw (1,1.4) to (2,2.8);
  \ddraw (3,1.4) to["$k$" swap] (2,2.8);
  \draw (2,0.7) node{$\alpha$};
  }
  ~=~ \sum_\delta \G ijk\OO\kb\ib \alpha\oo\oo\delta~ 
  \simptex[baseline=9mm,scale=0.8]{
  \ddraw (0,0) to["$\ib$"] (1,1.4);
  \ddraw (1,1.4) to["$i$"] (2,0);
  \draw (0,0) to (2,0);
  \ddraw (2,0) to["$j$" swap] (3,1.4);
  \ddraw (2,0) to["$\bar{i}$" swap] (2,2.8);
  \draw (1,1.4) to (2,2.8);
  \ddraw (3,1.4) to["$k$" swap] (2,2.8);
  \draw (2.5,1.4) node{$\delta$};
  } 
  ~=~ \Fob i \sum_\delta \G ijk\OO\kb\ib \alpha\oo\oo\delta 
  \, \delta
  \label{eq:RLnonrescaled}
  \end{align}
for $\alpha \iN \Hm ij\kb$, and similarly $LR(\alpha) \,{=}\, \Fo j 
\sum_\delta \F kij\OO\kb\jb\oo\alpha\delta\oo \,\delta$.
With the chosen convention of object labels, iteration just amounts to permuting those labels cyclically. We thus arrive at
  \begin{align}
  (\RR\LL)^3(\alpha)
  & = \di\ib\, \di\jb\, \di\kb\, \Fob i\,\Fob j\,\Fob k \sum_{\delta,\delta',\delta''} 
  \G ijk\OO\kb\ib\alpha\oo\oo\delta\, \G jki\OO\ib\jb\delta\oo\oo{\delta'}\,
  \G kij\OO\jb\kb{\delta'}\oo\oo{\delta''}\, \delta''
  \label{eq:RL**3rescaled}
  \end{align}
and
  \begin{align}
  (\LL\RR)^3(\alpha) & = \di i\, \di j\, \di k\, \Fo i\,\Fo j\,\Fo k \sum_{\delta,\delta',\delta''}
  \F kij\OO\kb\jb\oo\alpha\delta\oo\, \F jki\OO\jb\ib\oo\delta{\delta'}\oo\,
  \F ijk\OO\ib\kb\oo{\delta'}{\delta''}\oo\, \delta''
  \label{eq:LR**3rescaled}
  \end{align}
for $\alpha \iN \Hm ij\kb$.
Note that, using the fact that the numbers $\F ijk\OO\ib\kb\oo\alpha\beta\OO$ form an invertible matrix in the labels $\alpha$ and $\beta$ alone, with inverse matrix given by $\G ijk\OO\ib\kb\oo\beta\gamma\OO$, it directly follows from these identities that $(LR)^3\cir(RL)^3 \eq \id$, in agreement with Lemma \ref{lem:Lq**2,R**2}.

 \medskip

It is also worth recalling the diagonalized form of the double dual functors, as given in formulas \eqref{eq:doubleduals-diag} and \eqref{eq:doubleduals-diag-bar}. It is natural to study the compatibility of the operations $\LL$ and $\RR$ with the diagonalization of the double dual. We find

\begin{lem}
The linear maps $\LL$ and $\RR$ are interchanged by the rigidity dualities: we have
  \be
  \begin{array}{lll}
  & (\LL(\alpha))^{\cupp} = \RR(\alpha^\cupp) \,, \quad~
  & (\LL(\cc\alpha))^{\cupp} = \RR(\cc\alpha^{\cupp}) 
  \Nxl3
  \text{and} \qquad & (\RR(\alpha))^{\cupp} = \LL(\alpha^\cupp) \,, 
  & (\RR(\cc\alpha))^{\cupp} = \LL(\cc\alpha^{\cupp})
  \end{array}
  \label{eq:LL.RR.cap}
  \ee
and similarly for left duals.
As a consequence we have
  \be
  \eps_{\LL(\alpha)} = \eps_\alpha = \eps_{\RR(\alpha)} \,,
  \label{eq:eps:LR}
  \ee
and hence the entire orbit of an eigenvector $\alpha$ under iterated actions of $\LL$ and $\RR$ consists of eigenvectors with the same eigenvalue as $\alpha$ under the double dual. The same conclusion is obtained for $\cc\alpha$.
\end{lem}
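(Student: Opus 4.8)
The plan is to obtain both assertions formally, from Lemma~\ref{lem:cupp-va-LLRR} (which expresses the rigidity duals as the words $\RR\LL\RR$ and $\LL\RR\LL$ in $\LL$ and $\RR$) together with Lemma~\ref{lem:Lq**2,R**2} (the relations $\LL^2\eq\id\eq\RR^2$). The one additional ingredient is an elementary bookkeeping fact, to be read off from the matrix-coefficient formulas \eqref{eq:defPandQ-simplicesL}--\eqref{eq:defPandQ-simplicesR} (diagrammatically: gluing a unit-bounded face reverses the orientation of a triangle), namely that each of $\LL$ and $\RR$ \emph{interchanges} the two kinds of fundamental morphism space. Concretely $\LL$ carries $\Hm ijk$ to $\Hd{\ib}{k}{j}$ and $\Hd ijk$ to $\Hm{\ib}{k}{j}$, and $\RR$ does the analogous thing with a $\jb$-twist; in particular, when one applies $(-)^\cupp$ to $\LL(\alpha)$ or to $\RR(\alpha)$ one must invoke the \emph{other} of the two words in \eqref{eq:duals-as-LRLorRLR} than the one appropriate to $\alpha$ itself.

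Granting this, each of the four identities reduces to a one-line manipulation of words in $\LL$ and $\RR$. For $\alpha\iN\Hm ijk$ one has, using $\alpha^\cupp\eq\RR\LL\RR(\alpha)$ from \eqref{eq:duals-as-LRLorRLR} and $\RR^2\eq\id$ from \eqref{eq:LL2=1=RR2}, that $\RR(\alpha^\cupp)\eq\RR\RR\LL\RR(\alpha)\eq\LL\RR(\alpha)$; on the other hand $\LL(\alpha)\iN\Hd{\ib}{k}{j}$ is of $\Hd$-type, so $(\LL(\alpha))^\cupp\eq\LL\RR\LL(\LL(\alpha))\eq\LL\RR(\alpha)$, whence $(\LL(\alpha))^\cupp\eq\RR(\alpha^\cupp)$. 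For the companion identity the two sides already coincide, before any cancellation, as the fourfold composite $\LL\RR\LL\RR(\alpha)\eq(\RR(\alpha))^\cupp\eq\LL(\alpha^\cupp)$. The two identities for $\cc\alpha$ follow by swapping the roles of $\LL$ and $\RR$ and of the two words in \eqref{eq:duals-as-LRLorRLR}, and the ``left duals'' versions are the identical computation with ${}^{\cupp\!}(-)\eq\LL\RR\LL$ on $\Hm$-type spaces and ${}^{\cupp\!}(-)\eq\RR\LL\RR$ on $\Hd$-type spaces. (Diagrammatically this is just \eqref{eq:LL.RR.cap}: the full dual is the $180^\circ$ rotation that bars every label, $\LL$ and $\RR$ glue a unit-bounded face on the left resp.\ the right, and rotating-then-gluing-on-one-side equals gluing-on-the-other-side-then-rotating.)

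For the consequence \eqref{eq:eps:LR} I would apply the identity just proved twice. For $\alpha\iN\Hm ijk$ it gives $(\LL(\alpha))^{\cupp\cupp}\eq\big(\RR(\alpha^\cupp)\big)^\cupp\eq\LL\big(\alpha^{\cupp\cupp}\big)$, and likewise $(\RR(\alpha))^{\cupp\cupp}\eq\RR(\alpha^{\cupp\cupp})$. Substituting the diagonalized form $\alpha^{\cupp\cupp}\eq\eps_\alpha\sqrt{\dd i\,\dd j\,\dd\kb}\,\alpha$ of Lemma~\ref{lem:epsalpha}(i), and noting that the double-dual prefactor attached to the relabeled space $\Hd{\ib}{k}{j}$ equals $\sqrt{\dd i\,\dd\kb\,\dd j}\eq\sqrt{\dd i\,\dd j\,\dd\kb}$ -- the same prefactor as for $\alpha$, which is in any case forced by comparing \eqref{eq:quadrupledual-id} for $\alpha$ and for $\LL(\alpha)$ -- one reads off that $\LL(\alpha)$ is again a double-dual eigenvector with the \emph{same} sign, i.e.\ $\eps_{\LL(\alpha)}\eq\eps_\alpha$, and similarly $\eps_{\RR(\alpha)}\eq\eps_\alpha$. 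Running the same computation with $\cc\alpha$ in place of $\alpha$, using $\eps_{\cc\alpha}\eq\eps_\alpha$ from Lemma~\ref{lem:epsalpha}(ii), disposes of the dual basis vectors, and the orbit statement then follows by an immediate induction on the length of a word in $\LL$ and $\RR$.

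I expect no genuine obstacle here: the argument is as formal as it looks, and the only thing requiring care is the index bookkeeping -- tracking which relabeled space $\LL(\alpha)$ and $\RR(\alpha)$ land in, so that the correct branch of \eqref{eq:duals-as-LRLorRLR} is used, and verifying that the dimension prefactor in the diagonalized double dual is invariant under that relabeling once a fixed, object-by-object choice of square roots $\sqrt{\dd x}$ is made, so that $\eps_{\LL(\alpha)}$ and $\eps_{\RR(\alpha)}$ are honest signs rather than signs up to a dimension factor. A reader who wishes to bypass this bookkeeping entirely can carry out the whole argument diagrammatically via \eqref{eq:LL.RR.cap}.
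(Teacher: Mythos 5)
Your proof is correct. It rests on the same two pillars as the paper's own argument --- the expression \eqref{eq:duals-as-LRLorRLR} of the rigidity duals as the words $\RR\LL\RR$ and $\LL\RR\LL$, and the involutivity \eqref{eq:LL2=1=RR2} from Lemma \ref{lem:Lq**2,R**2} --- and it derives \eqref{eq:eps:LR} exactly as the paper does, namely from $(\LL(\alpha))^{\cupp\cupp}\eq\LL(\alpha^{\cupp\cupp})$ combined with the diagonalized form of the double dual. Where you genuinely diverge is in the first half: the paper first computes $\LL(\cc\alpha)\circ\LL(\alpha)\eq\di\ib\,\MM_{\alpha,\alpha}\,\id_j$ (and its $\RR$-analogue) to obtain the intermediate normalization relation \eqref{eq:L-vs-barLbar} between $\LL\alpha$ and the covector dual of $\LL\cc\alpha$, and only then invokes \eqref{eq:duals-as-LRLorRLR}; you dispense with that computation entirely and reduce all four identities to cancellation of words in $\LL$ and $\RR$, the only extra input being the (correct) bookkeeping that $\LL$ and $\RR$ exchange $\Hm$-type and $\Hd$-type spaces, so that the \emph{opposite} branch of \eqref{eq:duals-as-LRLorRLR} applies to $\LL(\alpha)$ and $\RR(\alpha)$ than to $\alpha$ itself. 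Since \eqref{eq:duals-as-LRLorRLR} is an identity of linear maps on each fundamental morphism space, applying it to the non-basis elements $\LL(\alpha)$, $\RR(\alpha)$ is legitimate, and all of your word reductions check out (including the case where no cancellation is needed and both sides equal $\LL\RR\LL\RR(\alpha)$ on the nose). What you lose relative to the paper is only the explicit relation \eqref{eq:L-vs-barLbar}, which records how $\LL$ interacts with covector duality and has some independent use; what you gain is a shorter argument whose group-theoretic content is transparent, together with a correctly flagged verification --- which the paper leaves implicit --- that the dimension prefactors in \eqref{eq:doubleduals-diag} and \eqref{eq:doubleduals-diag-bar} are invariant under the relabeling $(i,j,k)\mapsto(\ib,k,j)$, so that $\eps_{\LL(\alpha)}$ and $\eps_{\RR(\alpha)}$ are honest signs.
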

     
\begin{proof}
We first note that by direct calculation one has 
  \be
  \LL(\cc\alpha) \circ \LL(\alpha) = \di\ib\, \MM_{\alpha,\alpha}\, \id_j \qquand
  \RR(\cc\alpha) \circ \RR(\alpha) = \di j\, \MM_{\alpha,\alpha}\, \id_i \,.
  \ee
It follows that
  \be
  \LL\alpha = \di\ib\, M_{\alpha,\alpha}\, \overline{\LL\cc\alpha}
  \label{eq:L-vs-barLbar}
  \ee
and analogously for $\RR$. Invoking \eqref{eq:duals-as-LRLorRLR} this, in turn, implies the equalities \eqref{eq:LL.RR.cap}.
Given these equalities, it follows in particular that $(\LL(\alpha))^{\cupp\cupp} \eq \LL(\alpha^{\cupp\cupp})$ etc. 
When expressing the double dual through the signs obtained from diagonalization, this amounts to \eqref{eq:eps:LR}.
\end{proof}

\begin{rem} \label{rem:FiFipositive}
Let us, as in the proof of Theorem 2.3 of \cite{etno}, define $T_i$ to be the square matrix with entries $(T_i)_{j,k} \,{:=}\, \sum_\alpha \eps_{i,j;\alpha}^{~k}$, for $i,j,k \iN \S$. Then the sum rule \eqref{eq:dd:sumrule} states that the length-$|\S|$ vector $D$ with entries $D_i \,{:=}\, \sqrt{\di i\,\di\ib}$ given by the square roots of the paired dimensions of simple objects is a simultaneous eigenvector to all matrices $T_j$, with respective eigenvalues $D_j$.
Moreover, by combining \eqref{eq:eps:LR} with Lemma \ref{lem:epsalpha}(ii) one sees that the matrix $T_\ib$ is the transpose of the matrix $T_i$, and thus $D$ is also an eigenvector to the matrix $T_i^{} T_i^{\mathrm t} \eq T_i^{} T_\ib$, with eigenvalue $\di i\,\di\ib$.
Let now $\ko \eq \complex$. Then $T_i^{} T_i^{\mathrm t}$ is non-negative definite, and hence its eigenvalue $\di i\,\di\ib$ must be non-negative, and thus in fact positive. By the formula \eqref{eq:dimL.dimR} for $\di i\,\di\ib$ this implies that the product $\Fo i\,\Fob i$ of \sjs s is positive and thus, once we make the basis choice described in Remark \ref{rem:rescaledFi}, $\Fo i$ is a real number for every $i \iN \S$.
\end{rem}

%%%%%%%%%%%%%%%%%%%%%%%%%%%%%%%%%%%%%%%%%%%%%%%%%%%%%%%%%%%%%%%%%%%%

\section{Tetrahedral symmetry} \label{sec:SSSS}

\subsection{An \SSSS-action on morphism spaces}

We are now ready to address the main theme of this paper: tetrahedral symmetries of \sjs s. We first establish an action of the symmetric group \SSSS\ on the four-fold tensor products of basic morphism spaces which appear in the definition of \sjs s. To this end we make use of the action of \SSS\ by linear maps on the spaces $\Hm ijk$ and $\Hd ijk$ that is generated by the framed partial duals $\LL$ and $\RR$, as seen in Proposition \ref{prop:genuineS3}. Concretely, we define linear endomorphisms $\tau_{12}$, $\tau_{23}$ and $\tau_{34}$ of the direct sum vector space
  \be
  \HtotF := \!\bigoplus_{i,j,k,p,q,l \in \S}\!  \big( \HFm ijklpq \oplus \HGm ijklpq \big) \,,
  \label{eq:HtotF}
  \ee
where
  \be
  \HFm ijklpq := \Hm jkp \oti \Hd qkl \oti \Hm ipl \oti \Hd ijq
  \qquad \text{and} \qquad
  \HGm ijklpq := \Hd jkp \oti \Hm qkl \oti \Hd ipl \oti \Hm ijq \,.
  \label{eq:HF,HG}
  \ee
These maps will eventually play the role of the standard transpositions that generate the group \SSSS. The order of tensor product factors in $\HFm ijkpql$ and $\HGm ijkpql$ has been chosen so as to match that interpretation.

\begin{defi} \label{def:tauij}
Let \D\ be a \ndgusion category with distinguished set $S$ of simple objects. The maps $\tau_{ij}\colon \HtotF \To \HtotF$, for $(i,j) \iN \{ (1,2),(2,3),(3,4) \}$, are defined by linearly extending the following assignments for the basis of $\HtotF$ that consists of tensor products of basis elements of the basic morphism spaces:
  \be
  \bearll
  & \tau_{12}(\beta {\otimes} \Bar\delta {\otimes} \alpha {\otimes} \Bar\gamma)
  := \Bar\delta {\otimes} \beta {\otimes} \LL\alpha {\otimes} \LL\Bar\gamma
  ~ \iN \HGm {\ib}qkplj \,,
  \Nxl3
  & \tau_{23}(\beta {\otimes} \Bar\delta {\otimes} \alpha {\otimes} \Bar\gamma)
  := \LL\beta {\otimes} \alpha {\otimes} \Bar\delta {\otimes} \RR\Bar\gamma
  ~ \iN \HGm q{\jb}plki \,,
  \Nxl3
  \text{and} \quad & \tau_{34}(\beta {\otimes} \Bar\delta {\otimes} \alpha {\otimes} \Bar\gamma)
  := \RR\beta {\otimes} \RR\Bar{\delta} {\otimes} \Bar{\gamma} {\otimes} \alpha
  ~ \iN \HGm ip{\kb}qjl 
  \eear
  \label{eq:def:tauij-on-HF}
  \ee
for $(i,j,k,p,q,l) \iN S^{\times 6}$ and
$\alpha \iN \Hm ipl$, $\beta \iN \Hm jkp$, $\gamma \iN \Hm ijq$, $\delta \iN \Hm qkl$,
as well as 
  \be
  \hspace*{3.3em} \tau_{12}(\Bar\beta {\otimes} \delta {\otimes} \Bar\alpha {\otimes} \gamma)
  := \delta {\otimes} \Bar\beta {\otimes} \LL\Bar\alpha {\otimes} \LL\gamma
  ~ \in \HFm {\ib}qkplj
  \label{eq:def:tauij-on-HG}
  \ee
etc.\ for $\Bar\beta {\otimes} \delta {\otimes} \Bar\alpha {\otimes} \gamma
\iN \HGm ijklpq$.
\end{defi}

To see under which conditions the so defined maps generate an action of the group \SSSS\ on the vector space $\HtotF$, we note the following identities, which are obtained by direct calculation:

\begin{lem} \label{lem:S4relationsN}
The maps \eqref{eq:def:tauij-on-HF} and \eqref{eq:def:tauij-on-HG} satisfy 
  \be
  \begin{array}{rl}
  \tau_{12}^2 (\beta {\otimes} \Bar\delta {\otimes} \alpha {\otimes} \Bar\gamma) \!\!&
  = \beta {\otimes} \Bar\delta {\otimes} \LL^2\alpha {\otimes} \LL^2\Bar\gamma \,,
  \Nxl4
  \tau_{23}^2 (\beta {\otimes} \Bar\delta {\otimes} \alpha {\otimes} \Bar\gamma) \!\!&
  = \LL^2\beta {\otimes} \Bar\delta {\otimes} \alpha {\otimes} \RR^2\Bar\gamma \,,
  \Nxl4
  \tau_{34}^2 (\beta {\otimes} \Bar\delta {\otimes} \alpha {\otimes} \Bar\gamma) \!\!&
  = \RR^2\beta {\otimes} \RR^2\Bar\delta {\otimes} \alpha {\otimes} \Bar\gamma \,,
  \Nxl4
  \multicolumn2c{\hspace*{-3.7em}
  \tau_{12}^{}\,\tau_{34}^{}(\beta {\otimes} \Bar\delta {\otimes} \alpha {\otimes} \Bar\gamma)
  = \RR\Bar\delta \oti \RR\beta \oti \LL\Bar\gamma \oti \LL\alpha
  = \tau_{34}^{} \, \tau_{12}^{}(\beta {\otimes} \Bar\delta {\otimes} \alpha {\otimes} \Bar\gamma) \,, }
  \Nxl4
  \tau_{12}^{}\,\tau_{23}^{}\,\tau_{12}^{}
  (\beta {\otimes} \Bar\delta {\otimes} \alpha {\otimes} \Bar\gamma) \!\!&
  = \LL\alpha {\otimes} \LL\Bar\delta {\otimes} \LL\beta {\otimes} \LL\RR\LL\Bar\gamma \,,
  \Nxl4
  \tau_{23}^{}\,\tau_{12}^{}\,\tau_{23}^{}
  (\beta {\otimes} \Bar\delta {\otimes} \alpha {\otimes} \Bar\gamma) \!\!&
  = \LL\alpha {\otimes} \LL\Bar\delta {\otimes} \LL\beta {\otimes} \RR\LL\RR\Bar\gamma \,,
  \Nxl4
  \tau_{23}^{}\,\tau_{34}^{}\,\tau_{23}^{}
  (\beta {\otimes} \Bar\delta {\otimes} \alpha {\otimes} \Bar\gamma) \!\!&
  = \LL\RR\LL\beta {\otimes} \RR\Bar\gamma {\otimes} \RR\alpha {\otimes} \RR\Bar\delta \,,
  \Nxl4
  \tau_{23}^{}\,\tau_{34}^{}\,\tau_{23}^{}
  (\beta {\otimes} \Bar\delta {\otimes} \alpha {\otimes} \Bar\gamma) \!\!&
  = \RR\LL\RR\beta {\otimes} \RR\Bar\gamma {\otimes} \RR\alpha {\otimes} \RR\Bar\delta 
  \eear
  \label{eq:tauij-actionN}
  \ee
for all $\beta {\otimes} \Bar\delta {\otimes} \alpha {\otimes} \Bar\gamma \iN
\HF_{i,j,k,p,q,l} \,{\subset}\, \HtotF$. 
 \\
Analogous identities hold when acting on $\Bar\delta \oti \beta \oti \Bar\gamma \oti \alpha \iN \HG_{i,j,k,p,q,l} \,{\subset}\, \HtotF$.
\end{lem}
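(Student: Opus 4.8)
The plan is to establish each identity in \eqref{eq:tauij-actionN} by directly unwinding Definition~\ref{def:tauij}, organising the computation so that the rearrangement of tensor factors and the accumulation of partial duals are tracked separately. The key observation is that each generator factors as $\tau_{ij} = D_{ij}\circ\sigma_{ij}$, where $\sigma_{ij}$ is the pure permutation of the four tensor slots read off from \eqref{eq:def:tauij-on-HF}--\eqref{eq:def:tauij-on-HG} (carrying with it the accompanying relabelling of the six object indices and the interchange of the $\HF$- and $\HG$-summands), namely the transposition of slots $1,2$ for $\tau_{12}$, of slots $2,3$ for $\tau_{23}$, and of slots $3,4$ for $\tau_{34}$; and $D_{ij}$ is a \emph{decoration} that applies the modified framed partial dual $\LL$ (Definition~\ref{def:LL,RR}) to slots $3$ and $4$ for $\tau_{12}$, applies $\LL$ to slot $1$ and $\RR$ to slot $4$ for $\tau_{23}$, and applies $\RR$ to slots $1$ and $2$ for $\tau_{34}$. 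Since $\sigma_{ij}$ only permutes slots, one has $\sigma_{ij}\,D_{kl} = D_{kl}'\circ\sigma_{ij}$, where $D_{kl}'$ is the decoration $D_{kl}$ with its slots relabelled by $\sigma_{ij}$; hence any composite of the $\tau_{ij}$ can be brought into the normal form $(\text{composite of decorations})\circ(\text{composite of the }\sigma_{ij})$, and each asserted identity reduces to comparing two such normal forms slot by slot.

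First I would dispose of the squares. For $\tau_{12}^2$ one applies $\tau_{12}$ once, lands by Definition~\ref{def:tauij} in the summand $\HGm{\ib}qkplj$, and then applies the $\HG$-form \eqref{eq:def:tauij-on-HG}; since $\sigma_{12}$ and $D_{12}$ act on disjoint slot pairs $\{1,2\}$ and $\{3,4\}$, the normal form is $D_{12}^2\circ\sigma_{12}^2 = D_{12}^2$, i.e.\ $\LL^2$ is applied to slots $3$ and $4$, which yields $\beta\otimes\Bar\delta\otimes\LL^2\alpha\otimes\LL^2\Bar\gamma$ as claimed. The cases $\tau_{23}^2$ and $\tau_{34}^2$ are identical in structure, the decoration squaring to $\LL^2$ on slot $1$ and $\RR^2$ on slot $4$, resp.\ $\RR^2$ on slots $1,2$. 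For the relation $\tau_{12}^{}\tau_{34}^{} = \tau_{34}^{}\tau_{12}^{}$ one notes that $\sigma_{12}$ and $\sigma_{34}$ commute and that $D_{12}$ (acting on slots $3,4$) and $D_{34}$ (acting on slots $1,2$) act on disjoint slots and are fixed by the complementary permutations; both orders therefore normalise to $D_{12}D_{34}\circ\sigma_{12}\sigma_{34}$, which evaluates to $\RR\Bar\delta\otimes\RR\beta\otimes\LL\Bar\gamma\otimes\LL\alpha$.

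The braid-type identities are handled the same way with three factors. On the level of the underlying slot permutations the braid relation $\sigma_{12}\sigma_{23}\sigma_{12} = \sigma_{23}\sigma_{12}\sigma_{23}$ (and its $\{2,3\},\{3,4\}$ analogue) already holds, so the $\sigma$-parts of the two normal forms coincide. Carrying the decorations through the normalisation, one finds that in each such composite three of the four slots pick up a single $\LL$ (resp.\ a single $\RR$), while the remaining slot --- the one originally carrying $\Bar\gamma$, resp.\ $\beta$ --- accumulates a word of length three in $\LL$ and $\RR$, which comes out as $\LL\RR\LL$ along one ordering and as $\RR\LL\RR$ along the other; this is exactly the content of the last four lines of \eqref{eq:tauij-actionN}. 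The stated identities on $\HG_{i,j,k,p,q,l}$ follow verbatim with the roles of \eqref{eq:def:tauij-on-HF} and \eqref{eq:def:tauij-on-HG} interchanged. As a cross-check, each of these identities also admits a purely diagrammatic derivation from the face-gluing description of $\LL$, $\RR$ and hence of the $\tau_{ij}$ in terms of the simplices of Section~\ref{sec:semantics4gusion}.

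The only genuine difficulty here is combinatorial bookkeeping: keeping the six object labels and the four slot positions consistent through up to three compositions, and remembering to switch between the $\HF$- and $\HG$-forms of a generator whenever its argument changes type. The factorisation $\tau_{ij} = D_{ij}\circ\sigma_{ij}$ together with the resulting normal form removes essentially all of this risk, reducing every computation to moving the $\sigma_{ij}$ to the right and collecting decorations. I would deliberately leave the right-hand sides unsimplified: whether $\LL^2$ and $\RR^2$ reduce to the identity, and whether $\LL\RR\LL$ coincides with $\RR\LL\RR$, is precisely the content of Lemma~\ref{lem:Lq**2,R**2} and of Lemma~\ref{lem:cupp-va-LLRR} together with Proposition~\ref{prop:genuineS3}, to be combined with the present lemma only in Proposition~\ref{prop:genuineS4N} to determine when the $\tau_{ij}$ generate a genuine \SSSS-action.
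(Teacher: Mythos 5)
Your proposal is correct and matches the paper's treatment: the paper offers no argument beyond ``obtained by direct calculation,'' and your computation is exactly that calculation, with the factorisation $\tau_{ij}=D_{ij}\circ\sigma_{ij}$ serving only as a bookkeeping device (I verified that all eight right-hand sides come out as stated, including the final line, which in the paper should read $\tau_{34}\,\tau_{23}\,\tau_{34}$ rather than a repeated $\tau_{23}\,\tau_{34}\,\tau_{23}$). Deferring the simplification of $\LL^2$, $\RR^2$ and $\LL\RR\LL$ versus $\RR\LL\RR$ to Lemma \ref{lem:Lq**2,R**2} and Proposition \ref{prop:genuineS3} is likewise exactly how the paper proceeds.
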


When combined with Proposition \ref{prop:genuineS3}, this gives

\begin{prop}\label{prop:genuineS4N}
Let \D\ be a \ndgusion category such that $(-)^{\cupp\cupp} \,{=}\, \Id_\D$. Then the maps $\tau_{12}$, $\tau_{23}$ and $\tau_{34}$ generate an \SSSS-action on the space $\HtotF$.
\end{prop}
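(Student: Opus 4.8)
The plan is to verify that $\tau_{12}$, $\tau_{23}$ and $\tau_{34}$ satisfy the relations of the standard Coxeter presentation of \SSSS, reading $\tau_{12}$, $\tau_{23}$, $\tau_{34}$ as the adjacent transpositions $s_1 \eq (1\,2)$, $s_2 \eq (2\,3)$, $s_3 \eq (3\,4)$ --- an identification that is the natural one, since the labeling of the $\tau_{ij}$ and the chosen ordering of the four tensor factors in \eqref{eq:HF,HG} were arranged exactly for this. Concretely I would check the six relations $\tau_{12}^2 \eq \tau_{23}^2 \eq \tau_{34}^2 \eq \id$, $(\tau_{12}\tau_{34})^2 \eq \id$, and $(\tau_{12}\tau_{23})^3 \eq (\tau_{23}\tau_{34})^3 \eq \id$. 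As the $\tau_{ij}$ are obtained by linearly extending assignments on the product basis of $\HtotF$ and these relations are linear, it is enough to test each of them on a basis vector $\beta {\otimes} \Bar\delta {\otimes} \alpha {\otimes} \Bar\gamma \iN \HF_{i,j,k,p,q,l}$ and, via the stated analogues, on a basis vector of $\HG_{i,j,k,p,q,l}$. Once the six relations hold one gets a group homomorphism $\SSSS \To \mathrm{GL}(\HtotF)$ --- each $\tau_{ij}$ being an involution is in particular invertible --- which is the asserted action. All the composites needed are already computed on basis elements in Lemma \ref{lem:S4relationsN}, so the work reduces to feeding two facts about the partial duals $\LL$, $\RR$ into the identities \eqref{eq:tauij-actionN}.

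First I would dispose of the involutivity and commutation relations, which need no hypothesis on the double dual. By Lemma \ref{lem:Lq**2,R**2} one has $\LL^2 \eq \id \eq \RR^2$ on every basic morphism space; substituting this into the first three lines of \eqref{eq:tauij-actionN} collapses them to $\tau_{12}^2 \eq \tau_{23}^2 \eq \tau_{34}^2 \eq \id$ on the summand $\HF_{i,j,k,p,q,l}$, and the $\HG$-analogues come from the corresponding formulas noted at the end of Lemma \ref{lem:S4relationsN}. The fourth line of \eqref{eq:tauij-actionN} already records $\tau_{12}\tau_{34} \eq \tau_{34}\tau_{12}$ outright (with its $\HG$-counterpart), which combined with $\tau_{12}^2 \eq \tau_{34}^2 \eq \id$ gives $(\tau_{12}\tau_{34})^2 \eq \id$.

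The remaining two braid relations are where the hypothesis $(-)^{\cupp\cupp} \eq \Id_\D$ enters, via Proposition \ref{prop:genuineS3}: under that hypothesis $\LL$ and $\RR$ generate a genuine \SSS-action on the basic morphism spaces, which --- given $\LL^2 \eq \RR^2 \eq \id$ --- is equivalent to the braid identity $\LL\RR\LL \eq \RR\LL\RR$ on each of those spaces. The last four lines of \eqref{eq:tauij-actionN} are precisely the computations of $\tau_{12}\tau_{23}\tau_{12}$ versus $\tau_{23}\tau_{12}\tau_{23}$ and of $\tau_{23}\tau_{34}\tau_{23}$ versus $\tau_{34}\tau_{23}\tau_{34}$ on $\beta {\otimes} \Bar\delta {\otimes} \alpha {\otimes} \Bar\gamma$; in each pair the four output tensor factors coincide except in one slot, where they read $\LL\RR\LL\Bar\gamma$ against $\RR\LL\RR\Bar\gamma$ (last slot) for the first pair and $\LL\RR\LL\beta$ against $\RR\LL\RR\beta$ (first slot) for the second. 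The braid identity for $\LL$ and $\RR$ then equates the two sides of each braid relation, and the same on the $\HG$-summands finishes $(\tau_{12}\tau_{23})^3 \eq \id$ and $(\tau_{23}\tau_{34})^3 \eq \id$.

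I do not expect a genuine obstacle, since Lemma \ref{lem:S4relationsN} carries all the computational weight; the point that needs attention is the index bookkeeping --- checking that each composite $\tau_{ij}\tau_{kl}$ really has matching domain and codomain among the reindexed $\HF$- and $\HG$-summands appearing in Definition \ref{def:tauij}, so that the equalities of \eqref{eq:tauij-actionN} are equalities of endomorphisms of the whole space $\HtotF$ rather than merely of individual homogeneous pieces. That consistency is exactly what the particular ordering of factors in \eqref{eq:HF,HG} is designed to guarantee, so it comes down to careful tracking rather than to any new idea.
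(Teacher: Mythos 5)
Your proposal is correct and follows essentially the same route as the paper: the paper's proof likewise feeds $\LL^2 \eq \id \eq \RR^2$ and $\LL\RR\LL \eq \RR\LL\RR$ (obtained from Proposition \ref{prop:genuineS3} under the hypothesis $(-)^{\cupp\cupp} \eq \Id_\D$) into the identities \eqref{eq:tauij-actionN} of Lemma \ref{lem:S4relationsN} to deduce the Coxeter relations for \SSSS. Your additional observation that the involutivity and commutation relations already hold unconditionally, with the double-dual hypothesis needed only for the braid relations, is a correct (and slightly finer) reading of the same computation.
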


\begin{proof}
According to Proposition \ref{prop:genuineS3} we have $\LL^2 \,{=}\, \id \,{=}\, \RR^2$ and $\LL\RR\LL \,{=}\, \RR\LL\RR$ if $(-)^{\cupp\cupp} \,{=}\, \Id_\D$. The equalities \eqref{eq:tauij-actionN} then imply that $\tau_{12}$, $\tau_{23}$ and $\tau_{34}$ satisfy the relations for the standard transpositions that generate the group \SSSS.
\end{proof}

\begin{rem} 
The precise form of the maps $\LL$ and $\RR$, and thus also the \SSS-action on the fundamental morphism spaces, depends on the choice of the parameters $\mu_i$. In the \SSSS-action obtained from Definition \ref{def:tauij}, this dependence on the $\mu_i$ drops out.
\end{rem}

%%%%%%%%%%%%%%%%%%%%%%%%%%%%%%%%%%%%%%%%%%%%%%%%%%%%%%%%%%%%%%%%%%%%

\subsection{Simplicial origin of the \SSSS-action} \label{sec:simplicialSSSS}

The group \SSSS\ is the symmetry group of the tetrahedron (including orientation reversing transformations). That this symmetry group plays a role in our context is not a coincidence. Indeed, recall that in diagrammatic terms the action of the group \SSS\ generated by $\LL$ and $\RR$ on the basic morphism spaces amounts to an \SSS-action on framed 2-simplices. In a similar vein, the maps $\tau_{ij}$ introduced in Definition \ref{def:tauij} can be interpreted diagrammatically as mappings of the framed 3-simplices that realize the basis elements of the spaces $\HFm ijkpql$ and $\HGm ijkpql$. Concretely, these maps amount to transpositions of the vertices of a framed 3-simplex, which are effected on its framed faces by a transposition combined with suitably applying the operations $\LL$ and $\RR$.\,%
 \footnote{~This diagrammatic interpretation is well known. In the setting of spherical fusion categories it is described in Definition 3.10 and Lemma 3.11 of \cite{bawe2}.}
For instance, the map $\tau_{12}$ can be described as
  \be
  \tau_{12} : \quad
  \raisebox{-4.3em} {	  
  \simptex[scale=0.6]{
    \internalthreesimplex[1-2- - ][i,j,k,p,q,l][\gray\alpha,\gray\beta,\gamma,\gray\delta]
  } }
  \hspace*{-0.2em} \longmapsto \hspace*{-0.3em}
  \raisebox{-4.3em} {	  
  \simptex[scale=0.6]{
    \internalGsimplex[2-1- - ][\bar{i},q,k,l,j,p][\LL\alpha,\delta,\gray{{\LL\gamma}},\beta]
  } }
  \label{eq:tau12firstpic}
  \ee
(Here and below the labels of the faces on the back of a tetrahedron are drawn mirrored and in smaller font.)
Thus $\tau_{12}$ turns a tetrahedron of type $\FFm ijklpq$ into one of type $\GGm \ib qkpjl$ and vice versa, exchanging the morphisms $\beta$ and $\delta$ and in addition carrying with it the $\LL$-transformations on the morphisms $\alpha$ and $\gamma$. Similarly descriptions are obtained for $\tau_{23}$ and $\tau_{34}$.

Furthermore, the so-obtained diagrammatic prescription can conveniently be encoded in a suitable 4-simplex. Let us illustrate this again for the case of $\tau_{12}$. 
Consider the following two decompositions of a labeled 4-simplex into two and three, respectively, of the five 3-simplices that make up its boundary:
  \be
  \hspace*{-0.2em} \begin{array}{ll}
  ~ \\[-2.5em]
  & \raisebox{-3.1em}{\begin{turn}{35}\mbox{\Large$\rightsquigarrow$}\end{turn}}
  \raisebox{-3.3em} {
  \simptex[scale=0.6]{
    \threesimplexvertices - - - - 
    \ddraw[red] (0,3) to["\scriptsize\ensuremath{\bar{i}}" swap] (0,0);
    \ddraw[gray] (va.east) to["\scriptsize\ensuremath{l}" swap] (vd.center);
    \ddraw (va.east) to["\scriptsize\ensuremath{q}" swap] (vc.west);
    \ddraw[gray] (vc.west) to["\scriptsize\ensuremath{k}"] (vd.center);
    \draw[dashed,gray] (0,3) to (vd.center);
    \draw[dashed] (0,3) to (vc.west);
  }}
  \hspace*{-0.8em}\cup
  \raisebox{-3.5em} {
  \simptex[scale=0.6]{
    \threesimplexvertices - - - - 
    \draw[red] (3,4.5) to["\ensuremath{}" swap] (0,3);
    \ddraw[gray] (vb.south) to[xshift=-39pt,"\scriptsize\ensuremath{p}"] (vd.center);
    \ddraw (vb.south) to[xshift=-15pt,"\scriptsize\ensuremath{j}"] (vc.west);
    \ddraw[gray] (vc.west) to[xshift=-14pt,yshift=57pt,"\scriptsize\ensuremath{k}"] (vd.center);
    \draw[dashed,gray] (0,3) to (vd.center);
    \draw[dashed] (0,3) to (vc.west);
  }}
  \\[-2.6em]
  \simptex[scale=0.6]{
    \internalthreesimplex[][i,j,k,p,q,l] 
    \draw[red] (3,4.5) to["\ensuremath{}" swap] (0,3);
    \ddraw[red] (0,3) to["\scriptsize\ensuremath{\bar{i}}" swap] (0,0);
    \draw[dashed] (0,3) to (vc.west);
    \draw[dashed,gray] (0,3) to (vd.center);
  }
  \\[-3.8em]
  & \raisebox{1.3em}{\begin{turn}{-30}\mbox{\Large$\rightsquigarrow$}\end{turn}}
  \hspace*{-0.9em}
  \raisebox{-4.3em} {
  \simptex[scale=0.6]{
    \internalthreesimplex[][i,j,k,p,q,l] 
  }}
  \hspace*{-0.4em} \cup \hspace*{-0.2em}
  \raisebox{-4.3em} {
  \simptex[scale=0.6]{
    \threesimplexvertices - - - - 
    \draw[red] (3,4.5) to["\ensuremath{}" swap] (0,3);
    \ddraw[red] (0,3) to["\scriptsize\ensuremath{\bar{i}}" swap] (0,0);
    \ddraw (va.east) to["\scriptsize\ensuremath{i}"] (vb.south);
    \ddraw (vb.south) to["\scriptsize\ensuremath{j}"] (vc.west);
    \ddraw (va.east) to["\scriptsize\ensuremath{q}" swap] (vc.west);
    \draw[dashed] (0,3) to (vc.west);
  }}
  \hspace*{-.8em}\cup
  \raisebox{-3.2em} {
  \simptex[scale=0.6]{
    \threesimplexvertices - - - - 
    \draw[red] (3,4.5) to["\ensuremath{}" swap] (0,3);
    \ddraw[red] (0,3) to["\scriptsize\ensuremath{\Bar{i}}" swap] (0,0);
    \ddraw[gray] (vb.south) to[xshift=-5pt,yshift=-1pt,"\scriptsize\ensuremath{p}"] (vd.center);
    \ddraw[gray] (va.east) to[xshift=9pt,yshift=-51pt,"\scriptsize\ensuremath{l}"] (vd.center);
    \ddraw (va.east) to[xshift=-4pt,yshift=-32pt,"\scriptsize\ensuremath{i}"] (vb.south);
    \draw[dashed,gray] (0,3) to (vd.center);
  }}
  \\[-2.4em]~
  \eear
  \label{eq:4simplex12-decompo}
  \ee
 ~\\[-0.4em]
Accounting for the orientations of the tetrahedra, we can think of the 4-simplex as describing a mapping of its in-boundary, which is given by the three tetrahedra in the second row, to its out-boundary, given by the two tetrahedra in the first row.
 
Moreover, via the identification \eqref{eq:tetrahedra4FandG}, the two tetrahedra that do not contain the unoriented edge (which is labeled by the monoidal unit of \D) correspond to generic \sjs s, while the other three are of a special type:
the second and third tetrahedron in the second row can be recognized as describing the action of $\LL$ on the faces of the first tetrahedron in the second row that are labeled by $\Hd ijq$ and on $\Hm ipl$, respectively, while the second tetrahedron in the first row amounts to $\F \OO jkpjp \oo\beta\beta\oo \eq 1$.
Taken together, this means that the 4-simplex in \eqref{eq:4simplex12-decompo} precisely describes the mapping \eqref{eq:tau12firstpic}.

Analogous considerations apply to the maps $\tau_{23}$ and $\tau_{34}$. In summary, we arrive at the following interpretation of the maps $\tau_{ij}$ (for better readability we suppress some of the structure of the 4-simplices, which can unambiguously be restored):
  \begin{align}
  \hspace*{-0.4em} \tau_{12} ~\hat=
  \raisebox{-4.3em} {
  \simptex[scale=0.6]{
    \internalthreesimplex[1-2- - ][i,j,k,p,q,l][\gray\alpha,\gray\beta,\gamma,\gray\delta]
    \draw (3,4.5) to["\ensuremath{}" swap] (0,3);
    \ddraw (0,3) to["\scriptsize\ensuremath{\bar{i}}" swap] (0,0);
  } }
  \hspace*{-1.1em} :\hspace*{-0.2em}
  \raisebox{-4.3em} {	  
  \simptex[scale=0.6]{
    \internalthreesimplex[1-2- - ][i,j,k,p,q,l][\gray\alpha,\gray\beta,\gamma,\gray\delta]
  } }
  \hspace*{-1.1em} \longmapsto \hspace*{-1.2em}
  \raisebox{-4.3em} {	  
  \simptex[scale=0.6]{
    \internalGsimplex[2-1- - ][\bar{i},q,k,l,j,p][\LL\alpha,\delta,\gray{{\LL\gamma}},\beta]
  } }
  \label{eq:pic:tau12action}
  \end{align}
~\\[-27pt]
  \begin{align}
  \hspace*{-0.4em} \tau_{23} ~\hat= \hspace*{-0.7em}
  \raisebox{-4.3em} {
  \simptex[scale=0.6]{
    \internalthreesimplex[ -2-3- ]%
    [i,j,k,p,q,l]%
    [\gray\alpha,\gray\beta,\gamma,\gray\delta]
    \ddraw (6,3) to["\scriptsize\ensuremath{\bar{j}}" swap] (3,4.5);
    \draw (6,0) to["\ensuremath{}"] (6,3);
  } }
  \hspace*{-0.7em} :~ \hspace*{-0.5em} 
  \raisebox{-4.3em} {
  \simptex[scale=0.6]{
    \internalthreesimplex[ -2-3- ]%
    [i,j,k,p,q,l]%
    [\gray\alpha,\gray\beta,\gamma,\gray\delta]
  } }
  \hspace*{-1.1em} \longmapsto \hspace*{-1.2em}
  \raisebox{-4.3em} {
  \simptex[scale=0.6]{
    \internalGsimplex[ -3-2- ][q,\bar{j},{p},{k},i,{l}]%
    [\delta,\LL\beta,\gray{{\RR\gamma}},\alpha]
  } }
  \label{eq:pic:tau23action}
  \end{align}
~\\[-27pt]
  \begin{align}
  \hspace*{-0.4em} \tau_{34} ~\hat= \hspace*{-0.9em}
  \raisebox{-4.3em} {
  \simptex[scale=0.6]{
    \internalthreesimplex[ - -3-4]%
    [i,j,k,p,q,l]%
    [\gray\alpha,\gray\beta,\gamma,\gray\delta]
    \node (ve) at (4.5,1.4){};
    \ddraw[gray] (vd.center) to["\scriptsize\ensuremath{\bar{k}}"] (ve.center);
    \draw[gray] (6,0) to["\ensuremath{}"] (ve.center);
  } }
  \hspace*{-0.8em} :~ \hspace*{-0.5em} 
  \raisebox{-4.3em} {
  \simptex[scale=0.6]{
    \internalthreesimplex[ - -3-4]%
    [i,j,k,p,q,l]%
    [\gray\alpha,\gray\beta,\gamma,\gray\delta]
  } }
  \hspace*{-1.1em} \longmapsto \hspace*{-1.2em}
  \raisebox{-4.3em} {
  \simptex[scale=0.6]{
    \internalGsimplex[ - -4-3][i,p,\bar{k},j,l,q]%
    [\gamma,\RR\beta,\gray{\alpha},\RR\delta]
  } }
  \label{eq:pic:tau34action}
  \end{align}
for $\alpha \iN \Hm ipl$, $\beta \iN \Hm jkp$, $\gamma \iN \Hm ijq$, $\delta \iN \Hm qkl$.

%%%%%%%%%%%%%%%%%%%%%%%%%%%%%%%%%%%%%%%%%%%%%%%%%%%%%%%%%%%%%%%%%%%%

\subsection{Symmetries of \sjs s}

Next we turn the action of \SSSS\ obtained above into a source of statements about \sjs s. To this end we consider the induced action $(\tau(\Ftot))(-) \,{=}\, \Ftot(\tau^{-1}(-))$ on a suitable function $\Ftot$ on the space $\HtotF$ defined in \eqref{eq:HtotF} with values in \ko. In the definition of this function $\Ftot$ we introduce some factors involving dimensions which are chosen with some hindsight:

\begin{defi} \label{def:Ftot}
The function $\Ftot\colon \HtotF \To \ko$ is the direct sum over $(i,j,k,p,q,l) \iN \S^{\times6}$ of linear maps 
  \be
  \Fm ijklpq : \quad \HFm ijklpq \xrightarrow{\phantom{xx}} \ko
  \qquad \text{and} \qquad
  \Gm ijklqp : \quad \HGm ijklpq \xrightarrow{\phantom{xx}} \ko
  \label{eq:FGmaps2kN}
  \ee
on the direct summands $\HFm ijklpq$ and $\HGm ijklpq$ of $\HtotF$. These maps are defined on basis elements by
  \be
  \bearll &\dsty
  \Fm ijklpq (\beta {\otimes} \Bar\delta {\otimes} \alpha {\otimes} \Bar\gamma)
  := \sqrt{\di l^{}\,\di\lb}\, \F ijklpq \alpha\beta\gamma\delta
  \Nxl3
  \text{and} \qquad & \dsty
  \Gm ijklpq (\Bar\delta {\otimes} \beta {\otimes} \Bar\gamma {\otimes} \alpha)
  := \sqrt{\di l^{}\,\di\lb}\, \G ijklpq \alpha\beta\gamma\delta \,,
  \eear
  \label{eq:def:FmGmN}
  \ee
respectively, and extended by linearity to all of $\HFm ijklpq$ and $\HGm ijklpq$.
\end{defi}

With the chosen ordering of the tensor factors in $\HFm ijklpq$ and $\HGm ijklpq$ (and with our convention that the faces of a tetrahedron are ordered by face-vertex duality), each of the operations $\tau_{ij}$ in particular exchanges the $i$th and $j$th argument of the maps $\Ftot$. Accordingly we have
  \begin{align}
  & \big(\tau_{12}(\Ftot)\big)(\beta {\otimes} \Bar\delta {\otimes} \alpha {\otimes} \Bar\gamma)
  = \Gm \ib qkpjl (\bar\delta {\otimes} \beta {\otimes} \LL\alpha {\otimes} \LL\bar\gamma) \,,
  \label{eq:tau12actionN}
  \\[3pt]
  & \big(\tau_{23}(\Ftot)\big)(\beta {\otimes} \Bar\delta {\otimes} \alpha {\otimes} \Bar\gamma)
  = \Gm q{\jb}plik (\LL\beta {\otimes} \alpha {\otimes} \Bar\delta {\otimes} \RR\Bar\gamma)
  \label{eq:tau23actionN}
  \\[3pt]
  \text{and} \qquad
  & \big(\tau_{34}(\Ftot)\big)(\beta {\otimes} \Bar\delta {\otimes} \alpha {\otimes} \Bar\gamma)
  = \Gm ip{\kb}qlj (\RR\beta {\otimes} \RR\Bar{\delta} {\otimes} \Bar{\gamma} {\otimes} \alpha) 
  \label{eq:tau34actionN}
  \end{align}
for $\beta {\otimes} \Bar\delta {\otimes} \alpha {\otimes} \Bar\gamma \iN \HFm ijklpq$, and analogous formulas are obtained for the values of $\tau_{ij}(\Ftot)$ on the summands $\HGm ijklpq$.
These maps can actually be expressed as simple diagonal matrices acting on the summands of $\HtotF$:

\begin{prop} \label{prop:tauij-explN}
Let \D\ be a \ndgusion category. For $\beta {\otimes} \Bar\delta {\otimes} \alpha {\otimes} \Bar\gamma \iN \HFm ijklpq$ we have
  \be
  ~~~~ \big(\tau_{23}(\Ftot)\big)(\beta {\otimes} \Bar\delta {\otimes} \alpha {\otimes} \Bar\gamma)
  = \Ftot(\beta {\otimes} \Bar\delta {\otimes} \alpha {\otimes} \Bar\gamma) \,,
  \label{tau23=F2N}
  \ee
where $\dd x$ is the relative dimension \eqref{eq:def:dd}.
Moreover, when working with eigenbases for the basic morphism spaces, we have in addition
  \begin{align}
  & \big(\tau_{12}(\Ftot)\big)(\beta {\otimes} \Bar\delta {\otimes} \alpha {\otimes} \Bar\gamma)
  = \sqrt{\dd\ib}\,
  \eps_{i,p;\alpha}^{~l} \, \Ftot(\beta {\otimes} \Bar\delta {\otimes} \alpha {\otimes} \Bar\gamma) 
  \nonumber
  \nxl3
  \text{and} \quad &
  \big(\tau_{34}(\Ftot)\big)(\beta {\otimes} \Bar\delta {\otimes} \alpha {\otimes} \Bar\gamma)
  = \sqrt{\dd k}\, \eps_{q,k;\delta}^{~l} \,
  \Ftot(\beta {\otimes} \Bar\delta {\otimes} \alpha {\otimes} \Bar\gamma) \,,
  \label{tau34=dFMdiagN}
  \end{align}
with $\eps$ the signs defined in \eqref{eq:MMdiag}.
Analogous formulas hold when acting with the maps $\tau_{ij}(\Ftot)$ on the summands $\HGm ijklpq$ of $\HtotF$.
\end{prop}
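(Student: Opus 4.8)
The plan is a direct computation from the definitions, organised around the simplicial picture of Section~\ref{sec:simplicialSSSS}: the identities \eqref{tau23=F2N} and \eqref{tau34=dFMdiagN} are the algebraic content of the $4$-simplex descriptions \eqref{eq:pic:tau12action}, \eqref{eq:pic:tau23action} and \eqref{eq:pic:tau34action} of $\tau_{12}$, $\tau_{23}$ and $\tau_{34}$, and evaluating such a $4$-simplex amounts to one instance of the pentagon (Biedenharn--Elliott) identity for the \sjs s of \D\ (equivalently, of a $2$--$3$ move on tetrahedra). Accordingly I would carry out, for each of the three maps, essentially the same four-step calculation and then read off the claimed diagonal form.

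First, substitute the definition \eqref{eq:def:FmGmN} of the maps $\Gm ijklpq$ into the right-hand sides of \eqref{eq:tau12actionN}, \eqref{eq:tau23actionN} and \eqref{eq:tau34actionN}; this contributes an overall prefactor $\sqrt{\di{l'}\,\di{\lb'}}$, where $l'$ is the simple object that ends up in the distinguished slot of the target summand of $\HtotF$. Second, expand the decorations $\LL\alpha,\LL\Bar\gamma,\LL\beta,\RR\Bar\gamma,\RR\beta,\RR\Bar\delta$ that occur using \eqref{LvsLL,RvsRR} together with the matrix coefficients \eqref{eq:defPandQ-simplicesL} and \eqref{eq:defPandQ-simplicesR} of the unmodified partial duals; each right-hand side thereby becomes a finite sum --- over one internal simple object and the attached framings --- of a product of one generic $G$-symbol with one (in the case of $\tau_{23}$) or two (in the cases of $\tau_{12}$ and $\tau_{34}$) special $F$- or $G$-symbols carrying a $1$ in one of their slots. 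Third, collapse this sum with the pentagon identity. For $\tau_{23}$ the distinguished label is unchanged, $l'\eq l$, so the prefactor already coincides with the one in $\Ftot$; the scalars $\mu_j$ coming from $\LL_{jk}^p$ and $\RR^{ij}_q$ combine to $1/\Fob j$, and the pentagon collapse produces precisely a compensating factor $\Fob j$ together with the bare symbol $\F ijklpq\alpha\beta\gamma\delta$, which is \eqref{tau23=F2N}. That no $\mu_i$-dependent factor can survive here is in fact forced in advance, since $\Ftot$ is independent of the $\mu_i$ while $\dd x$ is not.

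For $\tau_{12}$ and $\tau_{34}$ the distinguished label of the target summand is $p$, respectively $q$, rather than $l$; now applying the pentagon identity --- with the monoidal unit occupying one edge of the ``rotated'' $\LL$- (resp.\ $\RR$-) tetrahedron --- collapses the sum to $\F ijklpq\alpha\beta\gamma\delta$ times the combination $\sum_\mu \F ip\pb i\OO l\oo\oo\alpha\mu\,\G ip\pb il\OO\alpha\mu\oo\oo$ for $\tau_{12}$, and the same combination with $(q,k,l)$ in place of $(i,p,l)$ and $\delta$ in place of $\alpha$ for $\tau_{34}$. By \eqref{eq:MM2FGversions} this is exactly the diagonal entry $\MM^{(i\,p\,l)}_{\alpha,\alpha}$, respectively $\MM^{(q\,k\,l)}_{\delta,\delta}$, and it is at this point that the eigenbasis hypothesis is used: by \eqref{eq:MMdiag} it equals $\sqrt{\di l\,\di\lb/(\di i\,\di\ib\,\di p\,\di\pb)}\;\eps_{i,p;\alpha}^{~l}$, respectively $\sqrt{\di l\,\di\lb/(\di q\,\di\qb\,\di k\,\di\kb)}\;\eps_{q,k;\delta}^{~l}$. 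Fourth and last, assemble the prefactors: using $\di i\,\di\ib\eq 1/(\Fo i\,\Fob i)$ from \eqref{eq:dimL.dimR} and $\dd\ib\eq(\Fob i/\Fo i)(\mu_i/\mu_\ib)^2$ from \eqref{eq:reldim} (and the analogous relations with $k$ in place of $i$), the product of $\sqrt{\di{l'}\,\di{\lb'}}$, the $\mu_i$-factors contributed by \eqref{LvsLL,RvsRR}, and $\MM^{(i\,p\,l)}_{\alpha,\alpha}$ (resp.\ $\MM^{(q\,k\,l)}_{\delta,\delta}$) telescopes to $\sqrt{\di l\,\di\lb}\;\sqrt{\dd\ib}\;\eps_{i,p;\alpha}^{~l}$ (resp.\ $\sqrt{\di l\,\di\lb}\;\sqrt{\dd k}\;\eps_{q,k;\delta}^{~l}$), which is $\sqrt{\dd\ib}\,\eps_{i,p;\alpha}^{~l}$ (resp.\ $\sqrt{\dd k}\,\eps_{q,k;\delta}^{~l}$) times $\Ftot(\beta{\otimes}\Bar\delta{\otimes}\alpha{\otimes}\Bar\gamma)$, i.e.\ \eqref{tau34=dFMdiagN}. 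The formulas for the action of the $\tau_{ij}$ on the summands $\HGm ijklpq$ come out in the same way after interchanging the roles of $F$ and $G$ throughout; Lemma~\ref{lem:Lq**2,R**2} and Lemma~\ref{lem:epsalpha}(ii) make the two types of summand consistent, and one sees in passing that no hypothesis $(-)^{\cupp\cupp}\eq\Id_\D$ is needed, since the argument invokes $\LL$ and $\RR$ only singly.

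The only genuine difficulty I anticipate is the bookkeeping. One has to track which simple object lands in which tensor slot after each $\tau_{ij}$ --- the orderings in the definitions of $\HFm ijklpq$ and $\HGm ijklpq$ are arranged so that $\tau_{ij}$ transposes the $i$th and $j$th arguments of $\Ftot$, but the bars $\Bar{(-)}$ must be handled with care --- and, more importantly, identify the correct instance of the pentagon identity, in particular recognising that the ``rotated'' $\LL$- or $\RR$-face contributes precisely the pentagon block with the monoidal unit on one edge, so that the residual $F$-$G$ combination is the one entering the definition \eqref{eq:MM2FGversions} of $\MM$. Once those matchings are in place, everything rests on the pentagon identity, the already-established description \eqref{eq:MM2FGversions} of $\MM$ together with its diagonalisation \eqref{eq:MMdiag}, and the normalisations \eqref{LvsLL,RvsRR} of the modified partial duals.
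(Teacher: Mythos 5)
Your proposal is correct and follows essentially the same route as the paper: expand the $\LL$- and $\RR$-decorations via \eqref{LvsLL,RvsRR} and \eqref{eq:defPandQ-simplicesL}--\eqref{eq:defPandQ-simplicesR}, recognize the matrix $\MM$ of \eqref{eq:MM2FGversions}, and then invoke \eqref{eq:MMdiag}; the paper carries out the middle collapse by the explicit string-diagram manipulations of Example \ref{exa:g12}, which encode the same unit-labelled pentagon-type identities you invoke by name, and your bookkeeping of the prefactors ($\mu_i/(\mu_\ib\,\Fo i)\eq\di\ib$, etc.) matches the paper's. The one loose point in your narrative is that in a general basis the collapse yields the full sum $\sum_\mu \MM^{(i\,p\,l)}_{\alpha,\mu}\,\F ijklpq\mu\beta\gamma\delta$ (cf.\ \eqref{tau12=dMFN}) rather than $\MM^{(i\,p\,l)}_{\alpha,\alpha}$ times the bare symbol, so the eigenbasis hypothesis is already needed to kill the off-diagonal terms, one step before you invoke it via \eqref{eq:MMdiag} --- a presentational slip that does not affect your final formulas.
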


\begin{proof}
We first rewrite the expressions on the right hand sides of \eqref{eq:tau12actionN}\,--\,\eqref{eq:tau34actionN} in terms of \sjs s. By straightforward manipulations of string diagrams, details of which are presented in Example \ref{exa:g12} in Appendix \ref{sec:app6j}, one finds
  \begin{align}
  & \big(\tau_{12}(\Ftot)\big)(\beta {\otimes} \Bar\delta {\otimes} \alpha {\otimes} \Bar\gamma)
  = \di\ib \, \sqrt{\di p^{}\, \di\pb}\,
  \sumN\mu ipl \MM^{(i\,p\,l)}_{\alpha,\mu} \, \F ijklpq \mu\beta\gamma\delta \,,
  \label{tau12=dMFN}
  \\[2pt]
  & \big(\tau_{23}(\Ftot)\big)(\beta {\otimes} \Bar\delta {\otimes} \alpha {\otimes} \Bar\gamma)
  = \sqrt{\di l^{}\, \di\lb}\, \F ijklpq \alpha\beta\gamma\delta
  \label{tau23=FN}
  \\[-2pt]
  \text{and} \qquad
  & \big(\tau_{34}(\Ftot)\big)(\beta {\otimes} \Bar\delta {\otimes} \alpha {\otimes} \Bar\gamma)
  = \di k \, \sqrt{\di q^{}\, \di\qb}\,
  \sumN\mu qkl \F ijklpq \alpha\beta\gamma\mu \, \, \MM^{(q\,k\,l)}_{\mu,\delta} 
  \label{tau34=dFMN}
  \end{align}
for $\beta {\otimes} \Bar\delta {\otimes} \alpha {\otimes} \Bar\gamma \iN \HFm ijklpq$, with $\MM^{(i\,j\,k)}$ the matrix defined in \eqref{eq:MM2FGversions}. The equality \eqref{tau23=FN} can directly be rewritten as \eqref{tau23=F2N}, while upon invoking \eqref{eq:MMdiag} the equalities \eqref{tau12=dMFN} and \eqref{tau34=dFMN} yield \eqref{tau34=dFMdiagN}.
\end{proof}
 
\begin{rem} 
(i) According to the proof, the rewriting of the right hand sides of \eqref{eq:tau12actionN}\,--\,\eqref{eq:tau34actionN} given in Proposition \ref{prop:tauij-explN} is achieved without any reference to the definition of the maps $\tau_{ij}$. The interpretation \eqref{eq:pic:tau12action}\,--\,\eqref{eq:pic:tau34action} of those maps shows that the so-obtained identities are not accidental, but have a definite geometric origin.
 \\[2pt]
(ii) Inserting the explicit form of the maps $\LL$ and $\RR$ from \eqref{eq:defPandQ-simplicesL}, \eqref{eq:defPandQ-simplicesR} and \eqref{eq:def:cup}, Proposition \ref{prop:tauij-explN} gives the equalities
  \be
  \bearll
  \F ijklpq \alpha\beta\gamma\delta \!\! &\dsty
  = \frac1{\sqrt{\di i^{}\,\di\ib}}\; \eps_{i,p;\alpha}^{~l} \, \sum_{\alpha',\gamma'}
  \F {\ib}ippl\OO{\alpha'}\alpha\oo\oo\, \G {\ib}ijj\OO q\oo\oo{\gamma'}\gamma\,
  \G {\ib}qkpjl{\gamma'}\beta\delta{\alpha'}
  \Nxl3 &\dsty
  = \Fob j \sum_{\beta',\gamma'}
  \F {\jb}jkkp\OO{\beta'}\beta\oo\oo\, \F ij{\jb}i\OO q\oo\oo\gamma{\gamma'}\,
  \G q{\jb}plik{\gamma'}\alpha{\beta'}\delta
  \Nxl1 &\dsty
  = \frac1{\sqrt{\di k^{}\,\di\kb}}\; \eps_{q,k;\delta}^{~l} \, \sum_{\beta',\delta'}
  \G jk{\kb}jp\OO \beta{\beta'}\oo\oo\, \F qk{\kb}q\OO l\oo\oo\delta{\delta'}\,
  \G ip{\kb}qlj\alpha{\delta'}{\beta'}\gamma \,.
  \eear
  \label{eq:F=withLLRR}
  \ee
\end{rem}

When combined with Proposition \ref{prop:genuineS4N}, the previous result implies in particular

\begin{thm} \label{thm:sym}
Let \D\ be a \ndgusion category. Then the linear maps $\tau_{12}$, $\tau_{23}$ and $\tau_{34}$ from Definition \ref{def:tauij} generate an \SSSS-action that leaves the function $\Ftot$ from Definition \ref{def:Ftot} invariant if and only if left and right dimensions coincide and the signs $\eps_\alpha$ defined by the relations \eqref{eq:doubleduals-diag} are all equal to 1.
\end{thm}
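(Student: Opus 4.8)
The plan is to derive the statement directly from Proposition~\ref{prop:genuineS4N} together with the explicit formulas recorded in Proposition~\ref{prop:tauij-explN}. First I would argue that the \SSSS-action exists, by Proposition~\ref{prop:genuineS4N}, precisely when $(-)^{\cupp\cupp} \,{=}\, \Id_\D$; and then show separately that, given this, $\Ftot$ is $\SSSS$-invariant iff each generator $\tau_{ij}$ fixes $\Ftot$. Since $\SSSS$ is generated by $\tau_{12}$, $\tau_{23}$ and $\tau_{34}$, invariance under the group is equivalent to invariance under these three maps. By \eqref{tau23=F2N} the generator $\tau_{23}$ always fixes $\Ftot$, so the remaining content is the condition that $\tau_{12}(\Ftot) \,{=}\, \Ftot \,{=}\, \tau_{34}(\Ftot)$.

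Next I would invoke the explicit diagonal form \eqref{tau34=dFMdiagN}: working in eigenbases of the basic morphism spaces, $\tau_{12}$ acts on the summand $\HFm ijklpq$ by the scalar $\sqrt{\dd\ib}\, \eps_{i,p;\alpha}^{~l}$ and $\tau_{34}$ by $\sqrt{\dd k}\, \eps_{q,k;\delta}^{~l}$. So $\tau_{12}(\Ftot) \,{=}\, \Ftot$ forces, for every nonzero value of $\F ijklpq \alpha\beta\gamma\delta$, the equality $\sqrt{\dd\ib}\, \eps_{i,p;\alpha}^{~l} \,{=}\, 1$, and similarly $\tau_{34}(\Ftot) \,{=}\, \Ftot$ forces $\sqrt{\dd k}\, \eps_{q,k;\delta}^{~l} \,{=}\, 1$. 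I would then extract from these the two asserted conditions. For the dimension part: taking $\beta \,{=}\, \delta \,{=}\, \gamma \,{=}\, \id$ and appropriate $\alpha$ (e.g.\ specializing to \sjs s of the form $\Fo i$, which are invertible for a \ndgusion category, hence nonzero), the factor $\sqrt{\dd\ib}$ is forced to be $1$ for every $i \iN \S$; by \eqref{eq:def:dd} this is exactly the statement that $\dim_L \,{=}\, \dim_R$. Conversely, if $\dim_L \,{=}\, \dim_R$ then all relative dimensions $\dd x$ equal $1$, so by \eqref{eq:quadrupledual-id} the quadruple dual is the identity functor; one still needs $(-)^{\cupp\cupp} \,{=}\, \Id_\D$ for the \SSSS-action to exist, and this follows from $\eps_\alpha \,{=}\, 1$ via \eqref{eq:doubleduals-diag} once the relative dimensions are trivial. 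For the sign part: once $\dd x \,{\equiv}\, 1$, the conditions reduce to $\eps_{i,p;\alpha}^{~l} \,{=}\, 1$ and $\eps_{q,k;\delta}^{~l} \,{=}\, 1$ whenever the relevant \sjs\ is nonzero; using that every triple $(i,j,k)$ with $\N ijk \,{\ne}\, 0$ appears among such configurations — e.g.\ via the completeness relation \eqref{eq:dominance}, or by choosing the remaining labels so that the \sjs\ is nonzero, which is possible because the matrices $\FM ijkl$ are invertible — this is equivalent to $\eps_\alpha \,{=}\, 1$ for all basis elements $\alpha$ of every space $\Hm ijk$.

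For the converse direction I would simply read \eqref{tau23=F2N} and \eqref{tau34=dFMdiagN} forwards: if $\dim_L \,{=}\, \dim_R$ (so $\dd x \,{=}\, 1$ everywhere) and all $\eps_\alpha \,{=}\, 1$, then by \eqref{eq:doubleduals-diag} we have $\alpha^{\cupp\cupp} \,{=}\, \alpha$ for all $\alpha$, hence $(-)^{\cupp\cupp} \,{=}\, \Id_\D$ and Proposition~\ref{prop:genuineS4N} applies, so the $\tau_{ij}$ genuinely generate an \SSSS-action; and the scalar factors $\sqrt{\dd\ib}\,\eps_{i,p;\alpha}^{~l}$, $\sqrt{\dd k}\,\eps_{q,k;\delta}^{~l}$ on the right hand sides of \eqref{tau34=dFMdiagN} all become $1$, while $\tau_{23}(\Ftot) \,{=}\, \Ftot$ by \eqref{tau23=F2N}, whence $\Ftot$ is fixed by all three generators and therefore by the whole group. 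One should also spell out the analogous computations on the $\HGm ijklpq$ summands, but these are governed by the same scalars (the maps $\tau_{ij}$ permute $\HF$- and $\HG$-summands, and the $\GG$-symbols carry the same $\eps$'s by \eqref{eq:doubleduals-diag-bar} and the last equality in \eqref{eq:doubledual-vs-covector}), so no new condition arises.

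The main obstacle I anticipate is the bookkeeping needed to show that the conditions ``$\sqrt{\dd\ib}\,\eps_{i,p;\alpha}^{~l} \,{=}\, 1$ whenever $\F ijklpq \alpha\beta\gamma\delta \,{\ne}\, 0$'' and ``$\eps_\alpha \,{=}\, 1$ for all basis vectors $\alpha$'' are genuinely equivalent — one must verify that \emph{every} morphism space $\Hm ipl$ with $\N ipl \,{\ne}\, 0$, and \emph{every} eigenbasis vector in it, actually occurs in a nonvanishing \sjs\ with the indices in the required slots. This is where invertibility of the matrices $\FM ipl{\,\cdot}$ (equivalently, the completeness relation \eqref{eq:dominance}) is essential: it guarantees that for each $\alpha \iN \Hm ipl$ there exist $j,q$ and basis vectors $\beta,\gamma,\delta$ with $\F ijklpq \alpha\beta\gamma\delta \,{\ne}\, 0$, so that no basis vector escapes the constraint. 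Once this surjectivity point is nailed down, the rest is a direct substitution into \eqref{tau23=F2N}, \eqref{tau34=dFMdiagN} and \eqref{eq:doubleduals-diag}.
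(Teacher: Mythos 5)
Your proposal is correct and follows essentially the same route as the paper, which derives the theorem by combining Proposition \ref{prop:genuineS4N} (existence of the \SSSS-action iff $(-)^{\cupp\cupp}\,{=}\,\Id_\D$) with the explicit diagonal scalars of Proposition \ref{prop:tauij-explN}. The extra bookkeeping you supply — specializing to $\Fo i$ to isolate the condition $\dd\ib\,{=}\,1$, and using invertibility of the matrices $\FM ijkl$ to ensure every eigenbasis vector occurs in a nonvanishing \sjs\ — is exactly the detail the paper leaves implicit, and it is sound.
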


Now recall from Remark \ref{rem:whyndgusion} that from any fusion category \C\ we can construct a monoidally equivalent \ndgusion category \D, namely the full subcategory $\D \eq \SC$. Monoidally equivalent fusion categories have the same \sjs s, and thus, upon a coherent choice of the square root in the expression \eqref{eq:MMdiag},
in particular the same eigenvalues $\eps_{i,j;\alpha}^{~k}$ of the involutive matrices \eqref{eq:involmatrix}. Moreover, by making use of the freedom in the specification of the rigidity of \C\ we can achieve that the left and right dimension functions for the induced rigidity of \D\ coincide (i.e.\ that the scalars $\mu_i$ satisfy \eqref{eq:mu-choice}). Thereby we arrive at

\begin{cor} \label{cor:sym}
Let \C\ be a fusion category. Then with a judicious choice of the rigidity functor the linear maps $\tau_{12}$, $\tau_{23}$ and $\tau_{34}$ generate an \SSSS-action that leaves the function $\Ftot$ invariant if and only if the signs $\eps_\alpha$ are all equal to 1.
\end{cor}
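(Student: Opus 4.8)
The plan is to deduce the Corollary from Theorem~\ref{thm:sym} by transporting the situation to a conveniently normalised \ndgusion model of \C. First I would fix, as in Remark~\ref{rem:whyndgusion}, a set $\S$ of representatives for the isomorphism classes of simple objects of \C\ with $1\iN\S$, and let $\D\eq\SC$ be the full and dense \ko-linear monoidal subcategory generated by $\S$ under finite direct sums and tensor products. Since \C\ is rigid and $\ib\iN\S$ for every $i\iN\S$, the rigidity of \C\ restricts to \D, so \D\ is a \ndgusion category that is monoidally equivalent to \C. As remarked after Definition~\ref{def:tauij}, the maps $\tau_{12},\tau_{23},\tau_{34}$ do not depend on the scalars $\mu_i$, and the dimension prefactors entering $\Ftot$ in Definition~\ref{def:Ftot} are paired dimensions, hence determined by the \sjs s; thus, up to the auxiliary basis choices, both the $\tau_{ij}$ and $\Ftot$ are data of the \sjs s alone, and a monoidal equivalence identifies them for \C\ and \D. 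The same applies to the involutive matrices \eqref{eq:involmatrix}, which depend only on the \sjs s and on the chosen branch of the square root in \eqref{eq:MMdiag}; hence, with a coherent choice of that square root, the signs $\eps_{i,j;\alpha}^{~k}$ agree for \C\ and \D\ and, in particular, do not depend on the chosen rigidity of \C. It therefore suffices to prove the assertion for \D.

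Next I would exploit the remaining freedom in the rigidity. By Proposition~\ref{prop:gusion-fusion} the rigidity that \D\ inherits is governed by the invertible scalars $\mu_i$ of Definition~\ref{def:cupcap}, which encode exactly the rescalings of the evaluation and coevaluation morphisms; conversely every choice of these scalars extends to a rigidity functor on \C. I would pick the $\mu_i$ so that \eqref{eq:mu-choice} holds, i.e.\ so that the left and right dimension functions of \D\ coincide; this is the content of the phrase ``judicious choice of the rigidity functor''. With this normalisation all relative dimensions equal $1$, so every square-root prefactor in Lemma~\ref{lem:epsalpha} is $1$ and, in an eigenbasis, $\alpha^{\cupp\cupp}\eq\eps_\alpha\,\alpha$; in particular $(-)^{\cupp\cupp}\eq\Id_\D$ precisely when all the $\eps_\alpha$ equal $1$, which is exactly the hypothesis of Proposition~\ref{prop:genuineS4N}.

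Now I would apply Theorem~\ref{thm:sym} to this \D: because left and right dimensions already coincide, the theorem asserts that $\tau_{12},\tau_{23},\tau_{34}$ generate an \SSSS-action leaving $\Ftot$ invariant \emph{if and only if} all the signs $\eps_\alpha$ equal $1$. Transporting this equivalence back to \C\ along the monoidal equivalence -- which, as recorded above, identifies the $\tau_{ij}$, the function $\Ftot$, and the signs $\eps_{i,j;\alpha}^{~k}$ -- yields exactly the statement of the Corollary.

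The step I expect to demand the most care is the bookkeeping in this reduction rather than any computation: one must check that ``the $\tau_{ij}$ generate an \SSSS-action leaving $\Ftot$ invariant'' is genuinely a property of the \sjs s, so that it is stable under monoidal equivalence and independent of the auxiliary basis choices, and that the normalisation \eqref{eq:mu-choice} is realised by an honest rigidity functor on \C\ rather than merely by a rescaling internal to \D. Once these points are settled, all of the substantive content has already been provided by Theorem~\ref{thm:sym}, and nothing further remains to be proved.
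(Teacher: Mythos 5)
Your proposal is correct and follows essentially the same route as the paper: reduce to the monoidally equivalent \ndgusion category $\D\eq\SC$ of Remark~\ref{rem:whyndgusion}, note that the \sjs s (hence the signs $\eps_{i,j;\alpha}^{~k}$, $\Ftot$ and the $\tau_{ij}$) are preserved under the equivalence, use the freedom in the rigidity to enforce \eqref{eq:mu-choice} so that left and right dimensions coincide, and then invoke Theorem~\ref{thm:sym}. Your extra care about the $\mu_i$-independence of the $\tau_{ij}$ and about realising the normalisation by an actual rigidity functor on \C\ only makes explicit what the paper states in the paragraph preceding the Corollary.
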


\medskip

When expressing the so-obtained tetrahedral relations as identities between numerical values of \sjs s, it is vital to keep in mind that -- as indicated by the extra factors of \sjs s with matrix label $\oo\OO\oo$ in \eqref{eq:F=withLLRR} -- for a given choice of (eigen)bases of the basic morphism spaces, the morphisms $\LL\alpha$ and $\RR\alpha$ need not be basis morphisms again, not even in the multiplicity-free case. It is also worth noting that the identities \eqref{eq:tau12actionN}\,--\,\eqref{eq:tau34actionN}, including $\LL$ and $\RR$, are gauge independent, whereas generically the numbers $\F ijklpq\alpha\beta\gamma\delta$ and $\G ijklpq\alpha\beta\gamma\delta$ do depend on a choice of gauge, compare Remark \ref{rem:gauge}. On the other hand, even if the function $\Ftot$ is not \SSSS-invariant, Proposition \ref{prop:tauij-explN} does provide non-trivial numerical identities between \sjs s.

Still, it would certainly be convenient if, with a suitable gauge choice, one could impose the requirement that for each basis element $\alpha$ of any of the spaces $\Hm ijk$ also $\LL\alpha$ and $\RR\alpha$ are again basis elements. However, this is in general not possible. For instance, our conventions that the basis elements of $\Hm i\OO i$ and $\Hm \OO ii$ are taken to be $\id_i$ while those of $\Hm \ib i\OO$ and  $\Hd i\ib\OO$ are $\oob i$ and $\oodd i$, respectively, are clearly incompatible with this requirement unless $\Fob i \eq 1$.
A partial remedy for this problem is to modify the fundamental convention that the basis of $\Hd ijk$ is covector dual to the one of $\Hm ijk$ by a suitable scalar. Concretely, let us demand that in place of \eqref{eq:dualbasis} we have
  \be
  \alpha' \circ \cc\alpha
  = \delta_{\alpha,\alpha'}^{}\, \sqrt{\frac{\di i\,\di j}{\di k}}\, \id_{k}
  \label{eq:newcodual}
  \ee
for $\alpha,\alpha' \iN \Hm ijk$. (This is indeed a standard convention in the quantum and condensed matter physics literature, see e.g.\ Appendix A.2 of \cite{szbv} for a recent exposition.) 
Switching to the convention \eqref{eq:newcodual} leads to modifications in various identities. These boil down to making the replacement $\F ijklpq\alpha\beta\gamma\delta \,{\xmapsto{\,\,\,}} \sqrt{\di i\,\di j\,\di k\,\di\lb}\, \F ijklpq\alpha\beta\gamma\delta$ in formulas like \eqref{eq:F=o.abcd.o,}.

The prescription \eqref{eq:newcodual} indeed eliminates the incompatibility problem with the basis choices in the spaces $\Hm i\OO i$ etc. However, in general neither \eqref{eq:newcodual} nor any other choice can lead to a basis such that any basis vector $\alpha$ is mapped to basis covectors by $\LL$ and $\RR$. This is demonstrated by the following example.

\begin{exa}
Consider the fusion categories over \complex\ with three isoclasses of simple objects, $\S \eq \{ \OO, x, y \}$, and with tensor products
  \be
  y \oti y \cong \OO \,, \qquad x \oti y \cong x \cong y \oti x \,, \qquad
  x \oti x \cong \OO \oplus x \oplus x \oplus y \,.
  \ee
Up to monoidal equivalence there are four such fusion categories; their \sjs s were computed in \cite{haHon}. There exists a gauge choice for which the numerical values of all \sjs s are elements of the number field $\mathbb Q[\sqrt3,\sqrt{-1}]$, and these are given in \Cite{App.\,A}{haHon} for one of the four equivalence classes of categories; the other three cases are obtained as Galois conjugates. With these values one finds e.g.\ $\di y \eq 1$, $\di x \eq 1  + \sqrt3$ and $\eps_{x,x;\alpha}^{~x} \eq 1$ independently of $\alpha$.
 \\[2pt]
As is shown by direct calculation in \Cite{Sect.\,4.2}{hong}, there does not exist any basis choice in the spaces $\Hm xxx$ and $\Hd xxx$ for which the two basis elements $\alpha\iN\Hm xxx$ both map to basis elements via $\LL$ and $\RR$ at the same time.
\end{exa}

On the other hand, there are many fusion categories for which a suitable choice of gauge and covector duality reduces the tetrahedral symmetries to equalities between \sjs s on the nose. As a demonstration, let us restrict our attention to the case that $\N ijk \iN \{0,1\}$ and $\eps_{i,j}^{~k} \eq 1$ for all $i,j,k \iN \S^{\times 3}$, assume that the left and right dimensions coincide, and impose the convention \eqref{eq:newcodual}. In this case the morphisms $\LL\alpha$ and $\RR\alpha$ are basis elements for each basis element $\alpha$ of any of the one-dimensional spaces $\Hm ijk$ if and only if there is a gauge choice for which the equalities
  \be
  \Fm {\ib}ikkp\OO = \Fm ik{\kb}i\OO p = \Gm ik{\kb}ip\OO = \Gm {\ib}ikk\OO p
  = \sqrt{\frac{\di p}{\di i\,\di k}}
  \label{eq:F=G=sqrt(d/dd)}
  \ee
hold for all $i,k,p \iN \S$. And if this is the case, then Proposition \ref{prop:tauij-explN} reduces to
  \be
  \FFm ijklpq = \sqrt{\frac{\di p\,\di q}{\di j\,\di l}}\, \GGm \ib qkpjl
  = \sqrt{\frac{\di p\,\di q}{\di i\,\di k}}\, \GGm q{\jb}plik
  = \sqrt{\frac{\di p\,\di q}{\di j\,\di l}}\, \GGm ip{\kb}qlj .
  \label{eq:S4onF}
  \ee

Well known categories for which one can achieve \eqref{eq:F=G=sqrt(d/dd)} are the following:

\begin{exa}
Consider the fusion categories over \complex\ with two isoclasses of simple objects, $\S \eq \{ \OO, \ta \}$, and with tensor product $\ta \oti \ta \cong \OO \oplus \ta$. Up to monoidal equivalence there are two such fusion categories \Cite{Sect.\,2.5}{ostr3}. They can be characterized in terms of the general solution to the pentagon equations: With the convention \eqref{eq:Fi0k=1}, the non-trivial \sjs s are \Cite{Lemma\,5}{boDa}
  \be
  \Fte = 1 \qquad\text{and}\qquad
  \Fta = \left( \begin{array}{cc} -\,a & -\,a\,b^{-1} \Nxl2 b & a \eear \right) ,
  \label{eq:Fte,Fta}
  \ee
with $b \iN \complex$ an arbitrary invertible number and $a \iN \complex$ a solution of the quadratic equation $a^2 \eq a + 1$ (and with the first row and column of $\Fta$ referring to the monoidal unit). Note that $\Fta$ is involutive.
 \\
The two equivalence classes of fusion categories correspond to the two solutions $a_\pm \eq (1 \,{\pm}\, \sqrt5)/2$ for $a$. The one with $a \eq a_-$, often called the \emph{Fibonacci category}, admits a unitary structure, while the one with $a \eq a_+$, known as the \emph{Yang-Lee category}, does not. The gauge choice for which the matrix $\Fta$ of the Fibonacci category is unitary is given by $b \eq \beta\,\sqrt{-a_-}$ with $|\beta| \eq 1$.
Inserting the \sjs s \eqref{eq:Fte,Fta} into \eqref{eq:dimL,dimR} and \eqref{eq:MM2FGversions} we have e.g.\ $\di\ta \eq -a^{-1}$ and $\MM^{(\ta\,\ta\,\ta)} \eq {-}a$ and thus, by \eqref{eq:MMdiag}, $\eps_{\ta,\ta}^{~\ta} \eq 1$.
It is then readily checked that some of the tetrahedral relations \eqref{eq:S4onF} are fulfilled automatically, while those involving the \sjs s $\Fta_{\one,\ta}$ or $\Fta_{\ta,\one}$ are satisfied iff
  \be
  \Fta_{\one,\ta} = \sqrt{-a} = \Fta_{\ta,\one} ,
  \ee
i.e.\ iff the free parameter $b$ is fixed to $b \eq \sqrt{-a}$. Note that in the unitary case this means that the gauge choice is fixed further in such a way that $\beta \eq 1$.
\end{exa}

 \vskip 3em
  
 \noindent
{\sc Acknowledgements:}\\[.3em]
We thank Bruce Bartlett and Laurens Lootens for helpful comments on an earlier version of the manuscript.
JF is supported by VR under project no.\ 2017-03836.

%%%%%%%%%%%%%%%%%%%%%%%%%%%%%%%%%%%%%%%%%%%%%%%%%%%%%%%%%%%%%%%%%%%%
\newpage
\appendix

\section{\sjs s} \label{sec:app6j}

Here we collect a few useful facts about the \sjs s $\F ijklpq\alpha\beta\gamma\delta$ of a \gusion category \C. Some of these are still valid beyond the class of \gusion categories (compare Remark \ref{rem:gusion}), but for brevity we will not spell this out. For further specific information about \sjs s see e.g.\ Chapter VI of \cite{TUra} (in the context of unimodal modular tensor categories), Section 4.9 of \cite{EGno} (in the context of semisimple tensor categories), and Section 2.1 of \cite{ushe} (in the context of fusion categories).

Let us first present the defining equality \eqref{eq:ab=F.cd-} in terms of string diagrams: we have
    \def\locpa  {0.82}   % vertical distance morphisms
    \def\locpb  {0.58}   % horizontal distance objects
  \be
  \raisebox{-3.8em} {\begin{tikzpicture}
  \coordinate (cooalpha) at (0.5*\locpb,2*\locpa);
  \coordinate (coobeta)  at (1.5*\locpb,\locpa);
  \draw[\colorObject,line width=\widthObject]
              (0,0) node[below=-1pt]{$i$} -- (cooalpha) node[left]{$\alpha$}
              (\locpb,0) node[below=-1pt]{$j$} -- (coobeta) node[right]{$\beta$}
              (2*\locpb,0) node[below=-1pt]{$k$} -- (coobeta)
              (coobeta) -- node[right]{$p$} (cooalpha)
              (cooalpha) -- (\locpb,3.1*\locpa) node[above=-1pt]{$l$};
  \basmorph {cooalpha}; \basmorph {coobeta};
  \node at (3*\locpb+1.5,\locpa) {$=~\dsty\sum_{q,\gamma,\delta}\F ijklpq\alpha\beta\gamma\delta $};
  % rhs:
  \begin{scope}[shift={(3*\locpb+3.3,0)}]
  \coordinate (coogamma) at (0.5*\locpb,\locpa);
  \coordinate (coodelta) at (1.5*\locpb,2*\locpa);
  \draw[\colorObject,line width=\widthObject]
              (0,0) node[below=-1pt]{$i$} -- (coogamma) node[left]{$\gamma$}
              (\locpb,0) node[below=-1pt]{$j$} -- (coogamma)
              (2*\locpb,0) node[below=-1pt]{$k$} -- (coodelta) node[right]{$\delta$}
              (coogamma) -- node[left]{$q$} (coodelta)
              (coodelta) -- (\locpb,3.1*\locpa) node[above=-1pt]{$l$};
  \basmorph {coogamma}; \basmorph {coodelta};
  \end{scope}
  \end{tikzpicture}}
  \label{eq:pic:ab=Fcd}
  \ee
Recall that when using string diagrams, \C\ is tacitly assumed to be strict. Thus any inherent non-triviality of the \sjs s has to arise from the choice of composition maps. Indeed, since the two binary operations of morphism composition and tensor product are strictly associative by themselves, the \sjs s can be thought of as a measure of the mutual associativity of all the binary operations involved in diagram \eqref{eq:pic:ab=Fcd}.

The numbers $\F ijklpq\alpha\beta\gamma\delta$ are naturally combined into matrices, with the rows and columns labeled by the multilabels $\alpha p\beta$ and $\gamma q\delta$, respectively. This way we get square matrices $\FM ijkl$ of size $\N{ij}kl \,{:=}\,\sumI p \N{j}{k}{p} \N{i}{p}{l} \eq \sumI q \N{i}{j}{q}\N{q}{k}{l}$. Henceforth we only consider the case that $\N{ij}kl$ is non-zero. Then $\FM ijkl$ is invertible; we denote the inverse by\,%
 \footnote{~This should not be mixed up with the use of the symbol $\mathrm G$ for a rescaled version of $\mathrm F$, as e.g.\ in \Cite{App.\,B}{leWe} and  \Cite{App.\,B}{haWo}.}
$\GM ijkl$ and its matrix elements by $\G ijklpq\alpha\beta\gamma\delta$. In terms of string diagrams, the latter obey
    \def\locpa  {0.82}   % vertical distance morphisms
    \def\locpb  {0.58}   % horizontal distance objects
  \be
  \raisebox{-3.8em} {\begin{tikzpicture}
  \coordinate (cooalpha) at (0.5*\locpb,\locpa);
  \coordinate (coobeta) at (1.5*\locpb,2*\locpa);
  \draw[\colorObject,line width=\widthObject]
              (0,0) node[below=-1pt]{$i$} -- (cooalpha) node[left]{$\alpha$}
              (\locpb,0) node[below=-1pt]{$j$} -- (cooalpha)
              (2*\locpb,0) node[below=-1pt]{$k$} -- (coobeta) node[right]{$\beta$}
              (cooalpha) -- node[left]{$p$} (coobeta)
              (coobeta) -- (\locpb,3.1*\locpa) node[above=-1pt]{$l$};
  \basmorph {cooalpha}; \basmorph {coobeta};
  \node at (3*\locpb+1.5,\locpa) {$=~\dsty\sum_{q,\gamma,\delta}\G ijklpq\alpha\beta\gamma\delta $};
  % rhs:
  \begin{scope}[shift={(3*\locpb+3.3,0)}]
  \coordinate (coogamma) at (0.5*\locpb,2*\locpa);
  \coordinate (coodelta)  at (1.5*\locpb,\locpa);
  \draw[\colorObject,line width=\widthObject]
              (0,0) node[below=-1pt]{$i$} -- (coogamma) node[left]{$\gamma$}
              (\locpb,0) node[below=-1pt]{$j$} -- (coodelta) node[right]{$\delta$}
              (2*\locpb,0) node[below=-1pt]{$k$} -- (coodelta)
              (coodelta) -- node[right]{$q$} (coogamma)
              (coogamma) -- (\locpb,3.1*\locpa) node[above=-1pt]{$l$};
  \basmorph {coogamma}; \basmorph {coodelta};
  \end{scope}
  \end{tikzpicture}}
  \label{eq:pic:ab=Gcd}
  \ee

The same coefficients appear when comparing compositions of the spaces $\Hd pqr$ instead of $\Hm pqr$. With the convention \eqref{eq:dualbasis} on dual bases we have
    \def\locpa  {0.82}   % vertical distance morphisms
    \def\locpb  {0.58}   % horizontal distance objects
  \be
  \raisebox{-4.0em} {\begin{tikzpicture}
  \coordinate (coogamma) at (0.5*\locpb,-\locpa);
  \coordinate (coodelta) at (1.5*\locpb,-2*\locpa);
  \draw[\colorObject,line width=\widthObject]
              (0,0) node[above=-1pt]{$i$} -- (coogamma) node[left]{$\cc\gamma$}
              (\locpb,0) node[above=-1pt]{$j$} -- (coogamma)
              (2*\locpb,0) node[above=-1pt]{$k$} -- (coodelta) node[right]{$\cc\delta$}
              (coogamma) -- node[left]{$q$} (coodelta)
              (coodelta) -- (\locpb,-3.1*\locpa) node[below=-1pt]{$l$};
  \basmorph {coogamma}; \basmorph {coodelta};
  \node at (3*\locpb+1.5,-1.7*\locpa) {$=~\dsty\sum_{p,\alpha,\beta}\F ijklpq\alpha\beta\gamma\delta $};
  % rhs:
  \begin{scope}[shift={(3*\locpb+3.6,0)}]
  \coordinate (cooalpha) at (0.5*\locpb,-2*\locpa);
  \coordinate (coobeta)  at (1.5*\locpb,-\locpa);
  \draw[\colorObject,line width=\widthObject]
              (0,0) node[above=-1pt]{$i$} -- (cooalpha) node[left]{$\cc\alpha$}
              (\locpb,0) node[above=-1pt]{$j$} -- (coobeta) node[right]{$\cc\beta$}
              (2*\locpb,0) node[above=-1pt]{$k$} -- (coobeta)
              (coobeta) -- node[right]{$p$} (cooalpha)
              (cooalpha) -- (\locpb,-3.1*\locpa) node[below=-1pt]{$l$};
  \basmorph {cooalpha}; \basmorph {coobeta};
  \end{scope}
  \end{tikzpicture}}
  \label{eq:pic:ab=Fcd-dual}
  \ee
and
    \def\locpa  {0.82}   % vertical distance morphisms
    \def\locpb  {0.58}   % horizontal distance objects
  \be
  \raisebox{-4.1em} {\begin{tikzpicture}
  \coordinate (coogamma) at (0.5*\locpb,-2*\locpa);
  \coordinate (coodelta) at (1.5*\locpb,-\locpa);
  \draw[\colorObject,line width=\widthObject]
              (0,0) node[above=-1pt]{$i$} -- (coogamma) node[left]{$\cc\gamma$}
              (\locpb,0) node[above=-1pt]{$j$} -- (coodelta) node[right]{$\cc\delta$}
              (2*\locpb,0) node[above=-1pt]{$k$} -- (coodelta)
              (coodelta) -- node[right]{$q$} (coogamma)
              (coogamma) -- (\locpb,-3.1*\locpa) node[below=-1pt]{$l$};
  \basmorph {coogamma}; \basmorph {coodelta};
  \node at (3*\locpb+1.3,-1.8*\locpa) {$=~\dsty\sum_{p,\alpha,\beta}\G ijklpq\alpha\beta\gamma\delta $};
  % rhs:
  \begin{scope}[shift={(3*\locpb+3.3,0)}]
  \coordinate (cooalpha) at (0.5*\locpb,-\locpa);
  \coordinate (coobeta)  at (1.5*\locpb,-2*\locpa);
  \draw[\colorObject,line width=\widthObject]
              (0,0) node[above=-1pt]{$i$} -- (cooalpha) node[left]{$\cc\alpha$}
              (\locpb,0) node[above=-1pt]{$j$} -- (cooalpha)
              (2*\locpb,0) node[above=-1pt]{$k$} -- (coobeta) node[right]{$\cc\beta$}
              (cooalpha) -- node[left]{$p$} (coobeta)
              (coobeta) -- (\locpb,-3.1*\locpa) node[below=-1pt]{$l$};
  \basmorph {cooalpha}; \basmorph {coobeta};
  \end{scope}
  \end{tikzpicture}}
  \ee

Composing the defining formula \eqref{eq:ab=F.cd-} with the morphism
$(\cc\gamma' \oti \id_{X_k}) \cir \cc\delta'$ leads to
  \be
  \F ijklpq\alpha\beta\gamma\delta \, \id_{l}
  = \alpha \circ (\id_{i} \oti \beta) \circ (\cc\gamma \oti \id_{k}) \cir \cc\delta \,.
  \label{eq:F=a.b.c.d}
  \ee
Tensoring this equality with $\id_{\Bar l}$ and pre- and post-composing further with the dual basis elements of the one-dimensional spaces $\Hd l{\Bar l}1$ and $\Hm l{\Bar l}1$, or alternatively with those of $\Hd l{\Bar l}1$ and $\Hm l{\Bar l}1$, yields the two expressions
    \def\locpa  {0.78}   % vertical distance morphisms
    \def\locpb  {0.88}   % horizontal distance objects
  \be
  \raisebox{-5.3em} {\begin{tikzpicture}
  \node at (-4.7*\locpb,2.5*\locpa) {$\F ijklpq\alpha\beta\gamma\delta ~=$};
  \coordinate (cooalpha) at (-\locpb,4*\locpa);
  \coordinate (coobeta)  at (0,3*\locpa);
  \coordinate (coogamma) at (-\locpb,2*\locpa);
  \coordinate (coodelta) at (0,\locpa);
  \coordinate (cooct1)   at (-2*\locpb,3*\locpa);  % control left line
  \coordinate (cooct2)   at (\locpb,2*\locpa);     % control right line
  \coordinate (cooct3)   at (-3*\locpb,2*\locpa);  % control long very-right line
  \coordinate (coobot)   at (-1.5*\locpb,0);
  \coordinate (cootop)   at (-2*\locpb,5*\locpa);
  \draw[\colorObject,line width=\widthObject]
          (coobot) -- node[right,yshift=-2pt]{$l$} (coodelta) node[right]{$\cc\delta$}
          -- node[left,yshift=-2pt]{$q$} (coogamma) node[left]{$\cc\gamma$}
          -- node[left,yshift=2pt]{$j$} (coobeta) node[right]{$\beta$}
          -- node[right,yshift=2pt]{$p$} (cooalpha) node[right]{$\alpha$}
          -- node[right,yshift=2pt]{$l$} (cootop)
          (coogamma) .. controls (cooct1) .. node[left=-1pt]{$i$} (cooalpha)
          (coodelta) .. controls (cooct2) .. node[right=-2pt]{$k$} (coobeta)
          (coobot) node[below]{$\oo$} .. controls (cooct3) .. node[left=-1pt]{$\Bar l$}
          (cootop) node[above]{$\oo$};
  \basmorph {cooalpha}; \basmorph {coobeta}; \basmorph {coogamma}; \basmorph {coodelta};
  \basmorph {cootop}; \basmorph {coobot};
  % rhs:
  \begin{scope}[shift={(5.2*\locpb,0)}]
  \node at (-3.0*\locpb,2.5*\locpa) {$=$};
  \coordinate (cooalpha) at (-\locpb,4*\locpa);
  \coordinate (coobeta)  at (0,3*\locpa);
  \coordinate (coogamma) at (-\locpb,2*\locpa);
  \coordinate (coodelta) at (0,\locpa);
  \coordinate (cooct1)   at (-2*\locpb,3*\locpa);  % control left line
  \coordinate (cooct2)   at (\locpb,2*\locpa);     % control right line
  \coordinate (cooct3)   at (3*\locpb,2*\locpa);  % control long very-right line
  \coordinate (coobot)   at (1.2*\locpb,0);
  \coordinate (cootop)   at (0.5*\locpb,5*\locpa);
  \draw[\colorObject,line width=\widthObject]
          (coobot) -- node[below,xshift=-2pt]{$l$} (coodelta) node[left]{$\cc\delta$}
          -- node[left,yshift=-2pt]{$q$} (coogamma) node[left]{$\cc\gamma$}
          -- node[left,yshift=2pt]{$j$} (coobeta) node[right]{$\beta$}
          -- node[right,yshift=2pt]{$p$} (cooalpha) node[left]{$\alpha$}
          -- node[left,yshift=3pt]{$l$} (cootop)
          (coogamma) .. controls (cooct1) .. node[left=-1pt]{$i$} (cooalpha)
          (coodelta) .. controls (cooct2) .. node[right=-2pt]{$k$} (coobeta)
          (coobot) node[below]{$\oo$} .. controls (cooct3) .. node[right=-1pt]{$\Bar l$}
          (cootop) node[above]{$\oo$};
  \basmorph {cooalpha}; \basmorph {coobeta}; \basmorph {coogamma}; \basmorph {coodelta};
  \basmorph {cootop}; \basmorph {coobot};
  \end{scope}
  \end{tikzpicture}}
  \label{eq:F=o.abcd.o,}
  \ee
for $\F ijklpq\alpha\beta\gamma\delta$. Analogously we have
    \def\locpa  {0.78}   % vertical distance morphisms
    \def\locpb  {0.88}   % horizontal distance objects
  \be
  \raisebox{-5.2em} {\begin{tikzpicture}
  \node at (-4.1*\locpb,2.5*\locpa) {$\G ijklpq\alpha\beta\gamma\delta ~=$};
  \coordinate (cooalpha) at (0,3*\locpa);
  \coordinate (coobeta)  at (\locpb,4*\locpa);
  \coordinate (coogamma) at (0,\locpa);
  \coordinate (coodelta) at (\locpb,2*\locpa);
  \coordinate (cooct1)   at (-\locpb,2*\locpa);    % control left line
  \coordinate (cooct2)   at (2*\locpb,3*\locpa);   % control right line
  \coordinate (cooct3)   at (-2.3*\locpb,2*\locpa);  % control long very-right line
  \coordinate (coobot)   at (-1.2*\locpb,0);
  \coordinate (cootop)   at (-0.5*\locpb,5*\locpa);
  \draw[\colorObject,line width=\widthObject]
          (coobot) -- node[right,yshift=-4pt]{$l$} (coogamma) node[right,yshift=-2pt]{$\cc\gamma$}
          -- node[right,yshift=-3pt]{$q$} (coodelta) node[right,yshift=-1pt]{$\cc\delta$}
          -- node[left,yshift=-3pt]{$j$} (cooalpha) node[left,yshift=1pt]{$\alpha$}
          -- node[left=-2pt,yshift=3pt]{$p$} (coobeta) node[right,yshift=1pt]{$\beta$}
          -- node[right,yshift=4pt]{$l$} (cootop)
          (coogamma) .. controls (cooct1) .. node[left=-1pt]{$i$} (cooalpha)
          (coodelta) .. controls (cooct2) .. node[right=-2pt]{$k$} (coobeta)
          (coobot) node[below]{$\oo$} .. controls (cooct3) .. node[left=-1pt]{$\Bar l$}
          (cootop) node[above]{$\oo$};
  \basmorph {cooalpha}; \basmorph {coobeta}; \basmorph {coogamma}; \basmorph {coodelta};
  \basmorph {cootop}; \basmorph {coobot};
  % rhs:
  \begin{scope}[shift={(5.2*\locpb,0)}]
  \node at (-2.0*\locpb,2.5*\locpa) {$=$};
  \coordinate (cooalpha) at (0,3*\locpa);
  \coordinate (coobeta)  at (\locpb,4*\locpa);
  \coordinate (coogamma) at (0,\locpa);
  \coordinate (coodelta) at (\locpb,2*\locpa);
  \coordinate (cooct1)   at (-\locpb,2*\locpa);    % control left line
  \coordinate (cooct2)   at (2*\locpb,3*\locpa);   % control right line
  \coordinate (cooct3)   at (3*\locpb,2*\locpa);  % control long very-right line
  \coordinate (coobot)   at (1.5*\locpb,0);
  \coordinate (cootop)   at (2.0*\locpb,5*\locpa);
  \draw[\colorObject,line width=\widthObject]
          (coobot) -- node[left,yshift=-1pt]{$l$} (coogamma) node[left,yshift=-2pt]{$\cc\gamma$}
          -- node[right,yshift=-3pt]{$q$} (coodelta) node[right,yshift=-1pt]{$\cc\delta$}
          -- node[left,yshift=-3pt]{$j$} (cooalpha) node[left,yshift=1pt]{$\alpha$}
          -- node[left=-2pt,yshift=3pt]{$p$} (coobeta) node[left,yshift=1pt]{$\beta$}
          -- node[left,yshift=3pt]{$l$} (cootop)
          (coogamma) .. controls (cooct1) .. node[left=-1pt]{$i$} (cooalpha)
          (coodelta) .. controls (cooct2) .. node[right=-2pt]{$k$} (coobeta)
          (coobot) node[below]{$\oo$} .. controls (cooct3) .. node[right=-1pt]{$\Bar l$}
          (cootop) node[above]{$\oo$};
  \basmorph {cooalpha}; \basmorph {coobeta}; \basmorph {coogamma}; \basmorph {coodelta};
  \basmorph {cootop}; \basmorph {coobot};
  \end{scope}
  \end{tikzpicture}}
  \label{eq:G=o.abcd.o,}
  \ee

Here, and henceforth, the special symbol ``$\oo$'' stands for the chosen basis of a one-dimensional morphism space. One particular situation in which such one-dimensional spaces occur is the case that $k \eq l \eq i$, $j \eq \Bar i$ and $p \eq q \eq 1$; as already done in the Introduction, we abbreviate the corresponding \sjs s, and likewise the corresponding $\GG$-coefficients, as
  \be
  \Fo i \equiv \Foo i \qquand \Go i \equiv \Goo i \,.
  \label{eq:FoGo}
  \ee
As a special case of \eqref{eq:F=a.b.c.d}, these numbers satisfy
    \def\locpa  {1.18}   % vertical distance morphisms
    \def\locpb  {0.68}   % horizontal distance morphisms
    \def\locpc  {30}     % in/out angle
    \def\locpC  {50}     % larger in/out angle
  \be
  \raisebox{-4.9em} {\begin{tikzpicture}
  \node at (-4.4*\locpb,0.3*\locpa) {$\Fo k$};
  \node at (-1.7*\locpb,0.3*\locpa) {$=$};
  \coordinate (coomid)  at (0,0);             % middle morphism
  \coordinate (cootopl) at (-\locpb,1.7*\locpa);  % top left morphism
  \coordinate (cootopr) at (\locpb,\locpa);   % top right
  \coordinate (coobotr) at (1.7*\locpb,-\locpa); % bottom right
  \begin{scope}[shift={(-2.8*\locpb,0)}]
  \draw[\colorObject,line width=\widthObject] (0,-\locpa) node[below=-0.5pt] {$k$}
              -- (0,1.7*\locpa) node[above=0.6pt] {$k$};
  \end{scope}
  \draw[\colorObject,line width=\widthObject] (coomid) [out=90-\locpc,in=270-\locpc] to
              node[above,midway,xshift=-2.1pt] {$\kb$} (cootopr);
  \draw[\colorObject,line width=\widthObject] (coomid) [out=90+\locpc,in=270] to (cootopl)
              node[above=0.6pt] {$k$};
  \draw[\colorObject,line width=\widthObject] (cootopr) [out=270+\locpc,in=90] to (coobotr)
              node[below=-0.5pt,xshift=-1pt] {$k$};
  \basmorph {coomid}; \basmorph {cootopr};
  \node[below=1pt] at (coomid) {$\oo$};
  \node[above=1pt] at (cootopr) {$\oo$};
  \node at (6.4*\locpb-1.2,0.3*\locpa) {and};
  % rhs:
  \begin{scope}[shift={(11.4*\locpb+1.2,0)}]
  \node at (-5.2*\locpb,0.3*\locpa) {$\Go k$};
  \node at (-2.5*\locpb,0.3*\locpa) {$=$};
  \coordinate (coomid)  at (0,0);             % middle morphism
  \coordinate (cootopl) at (-\locpb,\locpa);  % top left morphism
  \coordinate (cootopr) at (\locpb,1.7*\locpa);   % top right
  \coordinate (coobotl) at (-1.7*\locpb,-\locpa); % bottom left
  \begin{scope}[shift={(-3.6*\locpb,0)}]
  \draw[\colorObject,line width=\widthObject] (0,-\locpa) node[below=-0.5pt] {$k$}
              -- (0,1.7*\locpa) node[above=0.6pt] {$k$};
  \end{scope}
  \draw[\colorObject,line width=\widthObject] (coomid) [out=90+\locpc,in=270+\locpc] to
              node[above,midway,xshift=2.4pt] {$\kb$} (cootopl);
  \draw[\colorObject,line width=\widthObject] (coomid) [out=90-\locpc,in=270] to (cootopr)
              node[above=0.6pt] {$k$};
  \draw[\colorObject,line width=\widthObject] (cootopl) [out=270-\locpc,in=90] to (coobotl)
              node[below=-0.5pt,xshift=-1pt] {$k$};
  \basmorph {coomid}; \basmorph {cootopl};
  \node[below=1pt] at (coomid) {$\oo$};
  \node[above=1pt] at (cootopl) {$\oo$};
  \end{scope}
  \end{tikzpicture}}
  \label{eq:pic:Fo,Go}
  \ee
from which we also get the special cases
    \def\locpa  {1.18}   % vertical distance morphisms
    \def\locpb  {0.68}   % horizontal distance morphisms
    \def\locpc  {30}     % in/out angle
    \def\locpC  {50}     % larger in/out angle
  \be
  \Fo k ~=~ \Gob k ~=~~
  \raisebox{-3.5em}{\begin{tikzpicture}
  \coordinate (coomid)  at (0,0);             % middle morphism
  \coordinate (cootopl) at (-\locpb,\locpa);  % top left morphism
  \coordinate (cootopr) at (\locpb,\locpa);   % top right morphism
  \coordinate (coobot)  at (0,-\locpa);       % bottom morphism
  \draw[\colorObject,line width=\widthObject] (coomid) [out=90+\locpc,in=270+\locpc] to
              node[above,midway,xshift=1.3pt] {$k$} (cootopl);
  \draw[\colorObject,line width=\widthObject] (coomid) [out=90-\locpc,in=270-\locpc] to
              node[above,midway,xshift=-1.1pt] {$\Bar k$} (cootopr);
  \draw[\colorObject,line width=\widthObject] (coobot) [out=90+\locpc,in=270-\locpC] to
              node[below,midway,yshift=-4pt] {$\Bar k$} (cootopl);
  \draw[\colorObject,line width=\widthObject] (coobot) [out=90-\locpc,in=270+\locpC] to
              node[below,midway,yshift=-3pt] {$k$} (cootopr);
  \basmorpho {coomid}; \basmorphO {cootopl}; \basmorphO {cootopr}; \basmorpho {coobot};
  \end{tikzpicture}
  }
  % rhs:
  ~~=~~ \raisebox{-3.5em}{\begin{tikzpicture}
  \coordinate (coomid)  at (0,0);             % middle morphism
  \coordinate (cootop)  at (0,\locpa);        % top morphism
  \coordinate (coobotl) at (-\locpb,-\locpa); % bottom left morphism
  \coordinate (coobotr) at (\locpb,-\locpa);  % bottom right morphism
  \draw[\colorObject,line width=\widthObject] (coomid) [out=270-\locpc,in=90-\locpc] to
              node[above,midway,xshift=-1.3pt] {$\Bar k$} (coobotl);
  \draw[\colorObject,line width=\widthObject] (coomid) [out=270+\locpc,in=90+\locpc] to
              node[above,midway,xshift=1.3pt] {$k$} (coobotr);
  \draw[\colorObject,line width=\widthObject] (cootop) [out=270-\locpC,in=90+\locpc] to
              node[above,midway,yshift=4pt] {$k$} (coobotl);
  \draw[\colorObject,line width=\widthObject] (cootop) [out=270+\locpC,in=90-\locpc] to
              node[above,midway,yshift=5pt] {$\Bar k$} (coobotr);
  \basmorphO {coomid}; \basmorphO {cootop}; \basmorpho {coobotl}; \basmorpho {coobotr};
  \end{tikzpicture}
  }
  \label{eq:pic:FkkkkGkkkk}
  \ee
of \eqref{eq:F=o.abcd.o,} and \eqref{eq:G=o.abcd.o,}.

The pentagon identity for the associator $a$ of \C\ translates into a collection of polynomial equations for the \sjs s. While we do not need to refer to these equations explicitly in this paper, for completeness we present them anyway:
  \be
  \sumI w \sumN\kappa wsy \sumN\lambda pwx \sumN\eta qrw \F pqrxwu \lambda\eta\alpha\delta\,
  \F pwstyx\mu\kappa\lambda\nu\, \F qrsyvw\beta\gamma\eta\kappa
  = \sumN\sigma uvt \F pqvtyu\mu\beta\alpha\sigma\, \F urstvx\sigma\gamma\delta\nu
  \label{eq:pentagonF,}
  \ee
for all $p,q,r,s,t,u,v,x,y \iN \S$.

\begin{rem} \label{rem:gauge}
The choice of bases in the morphism spaces $\Hm ijk$ and $\Hd ijk$ is arbitrary. Clearly, the numerical values of \sjs s generically depend on these choices. Following physics parlance, altering the choice is often called a \emph{gauge transformation}.
We reserve this term for those transformations which preserve the covector duality, in the sense that the transformed basis vector is sent to the transformed covector under the covector duality map. (In case $\ko \eq \complex$, this means that the basis transformation is unitary.) 
As an example, if $\Bar i \,{\ne}\, i$, then by a gauge transformation that treats the bases $\ood i$ of $\Hm{\Bar i}i\OO$ and $\oodd i$ of $\Hm i{\Bar i}\OO$ differently, we can change the value of $\Fo i$; the value of $\Go i \eq \Fob i$ will then change as well, in such a way that the product $\Fo i\,\Go i$ is unchanged, i.e.\ is gauge independent.
If, on the other hand, $\Bar i \eq i$, then already $\Fo i \eq \Go i$ is gauge independent.
\end{rem}

\begin{rem} \label{rem:rescaledFi}
Trivially, $\Fob k \eq \Fo k$ if $\kb \eq k$. Provided that $\Fob k \,{\ne}\,0$, this equality can also be achieved for $\kb \,{\ne}\, k$ by a suitable rescaling (which generically, unlike a gauge transformation, modifies the convention for the covector duality).
Indeed, we may then multiply the basis morphism $\oodd k$ by $\xi_k \,{:=}\, (\Fo k\!/\Fob k)^{1/4}$ (with some choice of root) and correspondingly rescale $\oobb k$ by $\xi_k^{-1}$. The resulting redefined values of the \sjs s \eqref{eq:pic:FkkkkGkkkk}, which we indicate by a tilde, obey 
  \be
  \widetilde\FF^{(\kb\,k\,\kb)\,\kb}_{\OO,\OO}
  = \sqrt{\Fo k\,\Fob k} = \widetilde\FF^{(k\,\kb\,k)\,k}_{\OO,\OO} \,.
  \ee
\end{rem}

One can utilize the gauge freedom so as to get convenient values for specific \sjs s and thereby simplify some relations. In particular, the spaces $\HomC(i \oti 1,i)$ etc.\ are canonically isomorphic to $\EndC(i) \eq \ko\,\id_i$, and as we take the monoidal unit to be strict we can identify
  \be
  \HomC(i \oti 1,i) = \HomC(1 \oti i,i) = \HomC(i,i \oti 1) = \HomC(i,1 \oti i) 
  = \EndC(i) \,.
  \label{Homi1,i=Endi}
  \ee
This allows us to make the following

\begin{conv}
We take the identity morphism $\id_i$ as the basis of each of the one-di\-men\-sional spaces
\eqref{Homi1,i=Endi}.
\end{conv}

\noindent
With this convention we have
  \be
  \F ij\OO kpq\alpha\oo\beta\oo = \F k\OO jipq\alpha\oo\oo\beta
  = \F \OO kijpq\oo\alpha\oo\beta = \delta_{p,j}\,\delta_{q,k}\,\delta_{\alpha,\beta} \,.
  \label{eq:Fi0k=1}
  \ee

We finally mention further specific examples of the use of \sjs s. 

\begin{exa}
The numbers $\MM_{\alpha,\beta}^{(i\,j\,k)}$ defined in \eqref{eq:MM2FGversions} for $\alpha,\beta \iN \Hm ijk$ can be expressed in terms of string diagrams as follows:
    \def\locpa  {1.18}   % vertical distance morphisms
    \def\locpb  {0.68}   % horizontal distance morphisms
    \def\locpc  {30}     % in/out angle
  \be
  \MM_{\alpha,\beta}^{(i\,j\,k)} ~:=~~
  \raisebox{-7.3em}{ \begin{tikzpicture}
  \coordinate (cooalpha) at (2*\locpb,-0.6*\locpa);   % alpha
  \coordinate (coobeta) at (2*\locpb,0.6*\locpa);     % beta
  \coordinate (cootop)  at (0.5*\locpb,1.4*\locpa);   % top
  \coordinate (cootopp) at (0,2.3*\locpa);            % topp
  \coordinate (coobot)  at (0.5*\locpb,-1.4*\locpa);  % bottom
  \coordinate (coobott) at (0,-2.3*\locpa);           % bottombottom
  \draw[\colorObject,line width=\widthObject]
              (cooalpha) -- node[midway,right=-2pt] {$k$} (coobeta)
              [out=90+\locpc,in=270+\locpc] to node[left=-3pt,yshift=-5pt] {$i$} (cootop)
              [out=270-\locpc,in=90+\locpc] to node[midway,left=-1pt] {$\ib$} (coobot)
              [out=90-\locpc,in=270-\locpc] to node[left=-3pt,yshift=5pt] {$i$} (cooalpha);
  \draw[\colorObject,line width=\widthObject]
              (coobeta) [out=90-1.8*\locpc,in=270+1.3*\locpc] to node[midway,yshift=8pt] {$j$} (cootopp)
              [out=270-\locpc,in=90+\locpc] to node[midway,left=-1pt] {$\jb$} (coobott)
              [out=90-1.3*\locpc,in=270+1.8*\locpc] to node[midway,yshift=-8pt] {$j$} (cooalpha);
  \basmorph {cooalpha}; \node[right=0.7pt] at (cooalpha) {$\scriptstyle\alpha$};
  \basmorph {coobeta};  \node[right=0.7pt] at (coobeta) {$\scriptstyle\cc\beta$};
  \basmorphO {cootop}; \basmorphO {cootopp}; \basmorpho {coobot}; \basmorpho {coobott};
  \end{tikzpicture}
  }
  =~~ \raisebox{-4.8em}{ \begin{tikzpicture}
  \coordinate (cooalpha) at (0,-0.6*\locpa);           % alpha
  \coordinate (coobeta) at (0,0.6*\locpa);             % beta
  \coordinate (cootop)  at (-1.4*\locpb,1.5*\locpa);   % top
  \coordinate (cootopp) at (1.4*\locpb,1.5*\locpa);    % topp
  \coordinate (coobot)  at (-1.4*\locpb,-1.5*\locpa);  % bottom
  \coordinate (coobott) at (1.4*\locpb,-1.5*\locpa);   % bottombottom
  \draw[\colorObject,line width=\widthObject]
              (cooalpha) -- node[midway,right=-2pt] {$k$} (coobeta)
              [out=90+\locpc,in=270+\locpc] to node[right=-3pt,yshift=5pt] {$i$} (cootop)
              [out=270-1.2*\locpc,in=90+1.2*\locpc] to node[midway,right=-1pt] {$\ib$} (coobot)
              [out=90-\locpc,in=270-\locpc] to node[right=-3pt,yshift=-5pt] {$i$} (cooalpha);
  \draw[\colorObject,line width=\widthObject]
              (coobeta) [out=90-\locpc,in=270-\locpc] to node[midway,yshift=8pt] {$j$} (cootopp)
              [out=270+1.2*\locpc,in=90-1.2*\locpc] to node[midway,left=-1pt] {$\jb$} (coobott)
              [out=90+\locpc,in=270+\locpc] to node[midway,yshift=-9pt] {$j$} (cooalpha);
  \basmorph {cooalpha}; \node[right=0.7pt] at (cooalpha) {$\scriptstyle\alpha$};
  \basmorph {coobeta};  \node[right=0.7pt] at (coobeta) {$\scriptstyle\cc\beta$};
  \basmorphO {cootop}; \basmorphO {cootopp}; \basmorpho {coobot}; \basmorpho {coobott};
  \end{tikzpicture}
  }
  ~~= \raisebox{-7.3em}{ \begin{tikzpicture}
  \coordinate (cooalpha) at (-2*\locpb,-0.6*\locpa);   % alpha
  \coordinate (coobeta) at (-2*\locpb,0.6*\locpa);     % beta
  \coordinate (cootop)  at (-0.5*\locpb,1.4*\locpa);   % top
  \coordinate (cootopp) at (0,2.3*\locpa);             % topp
  \coordinate (coobot)  at (-0.5*\locpb,-1.4*\locpa);  % bottom
  \coordinate (coobott) at (0,-2.3*\locpa);            % bottombottom
  \draw[\colorObject,line width=\widthObject]
              (cooalpha) -- node[midway,right=-2pt] {$k$} (coobeta)
              [out=90-\locpc,in=270-\locpc] to node[right=-3pt,yshift=-5pt] {$j$} (cootop)
              [out=270+\locpc,in=90-\locpc] to node[midway,right=-1pt] {$\jb$} (coobot)
              [out=90+\locpc,in=270+\locpc] to node[right=-3pt,yshift=5pt] {$j$} (cooalpha);
  \draw[\colorObject,line width=\widthObject]
              (coobeta) [out=90+1.8*\locpc,in=270-1.3*\locpc] to node[midway,yshift=6pt] {$i$} (cootopp)
              [out=270+\locpc,in=90-\locpc] to node[midway,right=-1pt] {$\ib$} (coobott)
              [out=90+1.3*\locpc,in=270-1.8*\locpc] to node[midway,yshift=-8pt] {$i$} (cooalpha);
  \basmorph {cooalpha}; \node[left=0.7pt] at (cooalpha) {$\scriptstyle\alpha$};
  \basmorph {coobeta};  \node[left=0.7pt] at (coobeta) {$\scriptstyle\cc\beta$};
  \basmorphO {cootop}; \basmorphO {cootopp}; \basmorpho {coobot}; \basmorpho {coobott};
  \end{tikzpicture}
  }
  \label{eq:MM3versions}
  \ee
Equality of the three morphisms described by these string diagrams is an immediate consequence of covector duality.
(In the setting of \cite{bart6}, the equality amounts to \Cite{Lemma\,3.9}{bart6}.
In terms of simplicial diagrams, the morphisms \eqref{eq:MM3versions} are realized by two tetrahedra that share a face.)
\end{exa}

\begin{exa} \label{exa:g12}
We may use the formulas just obtained to arrive at convenient expressions for the numbers
  \be
  \begin{array}{rl}
  & g_{12} := \sqrt{ 1 / \di p^{}\, \di\pb}\;
  \Gm \ib qkpjl (\Bar\delta {\otimes} \beta {\otimes} \LL\alpha {\otimes} \LL\Bar\gamma) \,,
  \Nxl3
  & g_{23} := \sqrt{ 1 / \di l^{}\, \di\lb}\;
  \Gm q{\jb}plik (\LL\beta {\otimes} \alpha {\otimes} \Bar\delta {\otimes} \RR\Bar\gamma)
  \Nxl3
  \text{and} \quad 
  & g_{34} := \sqrt{ 1 / \di q^{}\, \di\qb}\;
  \Gm ip{\kb}qlj (\RR\beta {\otimes} \RR\Bar\delta {\otimes} \Bar\gamma {\otimes} \alpha)
  \eear
  \label{eq:def:g12}
  \ee
with $\mathbb G$ the functions defined by \eqref{eq:def:FmGmN}, and with the square root factors compensating the ones in $\mathbb G$, which are not needed for the discussion here.
In terms of string diagrams we can express $g_{12}$ as
    \def\locpa  {0.78}   % vertical distance morphisms
    \def\locpb  {0.88}   % horizontal distance objects
  \be
  g_{12} ~= \quad
  \raisebox{-5.8em} {\begin{tikzpicture}
  \coordinate (cooalpha) at (0,3*\locpa);
  \coordinate (coobeta)  at (\locpb,4*\locpa);
  \coordinate (coogamma) at (0,\locpa);
  \coordinate (coodelta) at (\locpb,2*\locpa);
  \coordinate (cooct1)   at (-\locpb,2*\locpa);    % control left line
  \coordinate (cooct2)   at (2*\locpb,3*\locpa);   % control right line
  \coordinate (cooct3)   at (-2.3*\locpb,2*\locpa);  % control long very-right line
  \coordinate (coobot)   at (-1.2*\locpb,0);
  \coordinate (cootop)   at (-0.5*\locpb,5*\locpa);
  \draw[\colorObject,line width=\widthObject]
          (coobot) -- node[right,yshift=-4pt]{$p$} (coogamma) node[right,yshift=-2pt]{$\LL\alpha$}
          -- node[right,yshift=-3pt]{$l$} (coodelta) node[right,yshift=-1pt]{$\cc\delta$}
          -- node[left,yshift=-3pt]{$q$} (cooalpha) node[left,yshift=2pt]{$\LL\cc\gamma$}
          -- node[left=-2pt,yshift=3pt]{$j$} (coobeta) node[right,yshift=1pt]{$\beta$}
          -- node[right,yshift=4pt]{$p$} (cootop)
          (coogamma) .. controls (cooct1) .. node[left=-1pt]{$\ib$} (cooalpha)
          (coodelta) .. controls (cooct2) .. node[right=-2pt]{$k$} (coobeta)
          (coobot) .. controls (cooct3) .. node[left=-1pt]{$\cc p$} (cootop);
  \basmorph {cooalpha}; \basmorph {coobeta}; \basmorph {coogamma}; \basmorph {coodelta};
  \basmorphO {cootop}; \basmorpho {coobot};
  \end{tikzpicture}
  }
  \quad \stackrel{\eqref{eq:dimL,dimR},\,\eqref{eq:defPandQ-simplicesL}} = ~~ \di\ib ~
  \raisebox{-7.0em} {\begin{tikzpicture}
  \coordinate (cooalpha) at (-\locpb,-4*\locpa);   %% all coo's "top-down"
  \coordinate (coobeta)  at (0,-3*\locpa);
  \coordinate (coogamma) at (-\locpb,-2*\locpa);
  \coordinate (coodelta) at (0,-\locpa);
  \coordinate (cooct1)   at (-3*\locpb,-3*\locpa);  % control left line
  \coordinate (cooct2)   at (\locpb,-2*\locpa);       % control right line
  \coordinate (cooct3)   at (-4.1*\locpb,-\locpa);    % bot control long very-right line
  \coordinate (cooct4)   at (-4.1*\locpb,-5.7*\locpa);  % top control long very-right line
  \coordinate (coobot)   at (-1.7*\locpb,0.7*\locpa);
  \coordinate (coobotm)  at (-1.8*\locpb,-0.7*\locpa);
  \coordinate (cootop)   at (-0.6*\locpb,-6.1*\locpa);
  \coordinate (cootopm)  at (-1.8*\locpb,-4.8*\locpa);
  \draw[\colorObject,line width=\widthObject]
          (coobot) -- node[right,yshift=3pt]{$p$} (coodelta) node[right,yshift=2pt]{$\beta$}
          -- node[right=-4pt,yshift=-4pt]{$j$} (coogamma) node[left,yshift=-2pt]{$\cc\gamma$}
          -- node[left,yshift=-2pt]{$q$} (coobeta) node[right,yshift=-2pt]{$\cc\delta$}
          -- node[right,yshift=-2pt]{$l$} (cooalpha) node[right,yshift=-2pt]{$\alpha$}
          -- node[right=-2pt,yshift=1pt]{$p$} (cootop)
          (coogamma) -- node [right=-1pt,yshift=3pt]{$i$} (coobotm)
          (cooalpha) -- node [left=-2pt,yshift=4pt]{$i$} (cootopm)
          (coobotm) .. controls (cooct1) .. node[left=-2pt,yshift=9pt]{$\ib$} (cootopm)
          (coodelta) .. controls (cooct2) .. node[right=-3pt]{$k$} (coobeta)
          (coobot) .. controls (cooct3) and (cooct4) .. node[left=-1pt]{$\pb$} (cootop);
  \basmorph {cooalpha}; \basmorph {coobeta}; \basmorph {coogamma}; \basmorph {coodelta};
  \basmorpho {cootop}; \basmorphO {coobot}; \basmorpho {cootopm}; \basmorphO {coobotm};
  \end{tikzpicture}
  }
  \ee
Noting that
    \def\locpa  {0.78}   % vertical distance morphisms
    \def\locpb  {0.88}   % horizontal distance objects
  \be
  \raisebox{-5.4em} {\begin{tikzpicture}
  \coordinate (cooalpha) at (0,-4.4*\locpa);          %% all coo's "top-down"
  \coordinate (coobeta)  at (0,-3*\locpa);
  \coordinate (coogamma) at (-\locpb,-2*\locpa);
  \coordinate (coodelta) at (0,-\locpa);
  \coordinate (cooct2)   at (\locpb,-2*\locpa);       % control right line
  \coordinate (coobot)   at (0,0.4*\locpa);
  \coordinate (coobotm)  at (-1.4*\locpb,0.4*\locpa);
  \draw[\colorObject,line width=\widthObject]
          (coobot) node[above=-1pt]{$p$} -- (coodelta) node[right,yshift=1pt]{$\beta$}
          -- node[right=-4pt,yshift=-5pt]{$j$} (coogamma) node[left,yshift=-2pt]{$\cc\gamma$}
          -- node[left=-2pt,yshift=-5pt]{$q$} (coobeta) node[right,yshift=-2pt]{$\cc\delta$}
          -- (cooalpha) node[below=-1pt]{$l$}
          (coogamma) [out=-240,in=-90] to (coobotm) node [above=-1pt]{$i$}
          (coodelta) .. controls (cooct2) .. node[right=-2pt]{$k$} (coobeta);
  \basmorph {coobeta}; \basmorph {coogamma}; \basmorph {coodelta};
  \end{tikzpicture}
  }
  \quad = \quad \sumN\nu ipl \F ijklpq \nu\beta\gamma\delta\, \cc\nu \,, 
  \label{eq:aG2Fidentity}
  \ee
this can be rewritten as
  \be
  g_{12} = \di\ib \, \sumN\mu ipl \F ijklpq \mu\beta\gamma\delta\, \MM^{(i\,p\,l)}_{\alpha,\mu}
  \label{g12=dFM}
  \ee
with $\MM$ as in \eqref{eq:MM2FGversions} and \eqref{eq:MM3versions}.
A very similar calculation can be performed for $g_{34}$. We have
    \def\locpa  {0.78}   % vertical distance morphisms
    \def\locpb  {0.88}   % horizontal distance objects
  \be
  g_{34} ~= \quad
  \raisebox{-5.2em} {\begin{tikzpicture}
  \coordinate (cooalpha) at (-\locpb,-4*\locpa);
  \coordinate (coobeta)  at (0,-3*\locpa);
  \coordinate (coogamma) at (-\locpb,-2*\locpa);
  \coordinate (coodelta) at (0,-\locpa);
  \coordinate (cooct1)   at (-2*\locpb,-3*\locpa);  % control left line
  \coordinate (cooct2)   at (\locpb,-2*\locpa);     % control right line
  \coordinate (cooct3)   at (-3*\locpb,-2*\locpa);  % control long very-right line
  \coordinate (coobot)   at (-1.5*\locpb,0);
  \coordinate (cootop)   at (-2*\locpb,-5*\locpa);
  \draw[\colorObject,line width=\widthObject]
          (coobot) -- node[right,yshift=3pt]{$q$} (coodelta) node[right,yshift=3pt]{$\RR\cc\delta$}
          -- node[left,yshift=3pt]{$l$} (coogamma) node[left,yshift=2pt]{$\alpha$}
          -- node[left=-2pt,yshift=-4pt]{$p$} (coobeta) node[right=-2pt,yshift=-4pt]{$\RR\beta$}
          -- node[right=-1pt,yshift=-3pt]{$j$} (cooalpha) node[right,yshift=-3pt]{$\cc\gamma$}
          -- node[right=-2pt,yshift=-3pt]{$q$} (cootop)
          (coogamma) .. controls (cooct1) .. node[left=-2pt]{$i$} (cooalpha)
          (coodelta) .. controls (cooct2) .. node[right=-2pt]{$\Bar k$} (coobeta)
          (coobot) .. controls (cooct3) .. node[left=-2pt]{$\Bar q$} (cootop) ;
  \basmorph {cooalpha}; \basmorph {coobeta}; \basmorph {coogamma}; \basmorph {coodelta};
  \basmorpho {cootop}; \basmorphO {coobot};
  \end{tikzpicture}
  }
  \quad \stackrel{%\eqref{eq:def:cup},\,
	  \eqref{eq:dimL,dimR},\,\eqref{eq:defPandQ-simplicesR}} = ~~ \di k~
  \raisebox{-7.0em} {\begin{tikzpicture}
  \coordinate (cooalpha) at (-\locpb,-4*\locpa);
  \coordinate (coobeta)  at (0,-3*\locpa);
  \coordinate (coogamma) at (-\locpb,-2*\locpa);
  \coordinate (coodelta) at (-0.3*\locpb,-\locpa);
  \coordinate (cooct1)   at (-2*\locpb,-3*\locpa);      % control left line
  \coordinate (cooct2)   at (1.8*\locpb,-2.1*\locpa);   % control right line
  \coordinate (cooct3)   at (-3.6*\locpb,-0.6*\locpa);  % bot control long very-left line
  \coordinate (cooct4)   at (-3.6*\locpb,-5.2*\locpa);  % top control long very-left line
  \coordinate (coobot)   at (-1.5*\locpb,0.9*\locpa);
  \coordinate (cootop)   at (-0.3*\locpb,-5.7*\locpa);
  \coordinate (coobotr)  at (0.6*\locpb,0.1*\locpa);
  \coordinate (cootopr)  at (1.0*\locpb,-4.1*\locpa);
  \draw[\colorObject,line width=\widthObject]
          (coobot) -- node[right=-3pt,yshift=4pt]{$q$} (coodelta) node[right,yshift=-3pt]{$\cc\delta$}
          -- node[left=-3pt,yshift=5pt]{$l$} (coogamma) node[left,yshift=2pt]{$\alpha$}
          -- node[left=-2pt,yshift=-5pt]{$p$} (coobeta) node[right,yshift=2pt]{$\beta$}
          -- node[left=-2pt,yshift=3pt]{$j$} (cooalpha) node[right=-1pt,yshift=-2pt]{$\cc\gamma$}
          -- node[right=-2pt,yshift=3pt]{$q$} (cootop)
          (coogamma) .. controls (cooct1) .. node[left=-2pt]{$i$} (cooalpha)
          (coodelta) -- node[left=-3pt,yshift=3pt]{$k$} (coobotr)
          .. controls (cooct2) .. node[right=-1pt]{$\Bar k$} (cootopr)
          -- node[left=-3pt,yshift=-3pt]{$k$} (coobeta)
          (coobot) .. controls (cooct3) and (cooct4) .. node[left=-2pt,yshift=-6pt]{$\Bar q$} (cootop) ;
  \basmorph {cooalpha}; \basmorph {coobeta}; \basmorph {coogamma}; \basmorph {coodelta};
  \basmorpho {cootop}; \basmorphO {coobot}; \basmorphO {coobotr}; \basmorpho {cootopr};
  \end{tikzpicture}
  }
  \ee
By a similar identity as \eqref{eq:aG2Fidentity}, this is rewritten as
  \be
  g_{34} = \di k \, \sumN\mu qkl \F ijklpq \alpha\beta\gamma\mu \, \MM^{(q\,k\,l)}_{\mu,\delta} .
  \ee
Finally for $g_{23}$ we have
  \be
  g_{23} ~= \quad
  \raisebox{-5.2em} {\begin{tikzpicture}
  \coordinate (cooalpha) at (-\locpb,-4*\locpa);
  \coordinate (coobeta)  at (0,-3*\locpa);
  \coordinate (coogamma) at (-\locpb,-2*\locpa);
  \coordinate (coodelta) at (0,-\locpa);
  \coordinate (cooct1)   at (-2*\locpb,-3*\locpa);  % control left line
  \coordinate (cooct2)   at (\locpb,-2*\locpa);     % control right line
  \coordinate (cooct3)   at (-3*\locpb,-2*\locpa);  % control long very-left line
  \coordinate (coobot)   at (-1.5*\locpb,0);
  \coordinate (cootop)   at (-2*\locpb,-5*\locpa);
  \draw[\colorObject,line width=\widthObject]
          (coobot) -- node[right,yshift=4pt]{$l$} (coodelta) node[right,yshift=1pt]{$\alpha$}
          -- node[left,yshift=3pt]{$i$} (coogamma) node[left,yshift=2pt]{$\RR\cc\gamma$}
          -- node[left,yshift=-4pt]{$\Bar j$} (coobeta) node[right,yshift=-3pt]{$\LL\beta$}
          -- node[right,yshift=-2pt]{$k$} (cooalpha) node[right,yshift=-3pt]{$\cc\delta$}
          -- node[right,yshift=-2pt]{$l$} (cootop)
          (coogamma) .. controls (cooct1) .. node[left=-2pt]{$q$} (cooalpha)
          (coodelta) .. controls (cooct2) .. node[right=-2pt]{$p$} (coobeta)
          (coobot) .. controls (cooct3) .. node[left=-1pt]{$\Bar l$} (cootop) ;
  \basmorph {cooalpha}; \basmorph {coobeta}; \basmorph {coogamma}; \basmorph {coodelta};
  \basmorpho {cootop}; \basmorphO {coobot};
  \end{tikzpicture}
  }
  \quad \stackrel{\eqref{eq:defPandQ-simplicesL},\,\eqref{eq:defPandQ-simplicesR}} = ~~ \frac1{\Fob j} ~~
  \raisebox{-7.5em} {\begin{tikzpicture}
  \coordinate (cooalpha) at (0.5*\locpb,4.9*\locpa);
  \coordinate (coobeta)  at (0.5*\locpa,3.2*\locpa);
  \coordinate (coogamma) at (-1.3*\locpb,1.8*\locpa);
  \coordinate (coodelta) at (-0.5*\locpa,0.1*\locpa);
  \coordinate (cooct1)   at (-1.8*\locpb,4.1*\locpa);  % control upper left line
  \coordinate (cooct2)   at (1.2*\locpb,0.7*\locpa);   % control lower right line
  \coordinate (cooct3)   at (-3.5*\locpb,2*\locpa);    % control long very-left line
  \coordinate (cooct4)   at (-1.9*\locpb,0.8*\locpa);  % control lower left line
  \coordinate (cooct5)   at (1.4*\locpb,4.1*\locpa);   % control upper right line
  \coordinate (coobot)   at (-1.7*\locpb,-0.9*\locpa);
  \coordinate (cootop)   at (-1.6*\locpb,5.9*\locpa);
  \coordinate (coomidl)  at (-0.7*\locpb,3.6*\locpa);
  \coordinate (coomidr)  at (-0.1*\locpb,1.4*\locpa);
  \draw[\colorObject,line width=\widthObject]
          (coobot) -- node[right=-2pt,yshift=-4pt]{$l$} (coodelta) node[right,yshift=-4pt]{$\cc\delta$}
          .. controls (cooct4) .. node[left=-2pt,yshift=-4pt]{$q$}
             (coogamma) node[left,yshift=-1pt]{$\cc\gamma$}
          -- node[left=-5pt,yshift=4pt]{$j$} (coomidl)
          -- node[left=-2pt,yshift=-2pt]{$\jb$} (coomidr)
          -- node[right=-3pt,yshift=-3pt]{$j$} (coobeta) node[right,yshift=-2pt]{$\beta$}
          .. controls (cooct5) .. node[right=-2pt,yshift=2pt]{$p$}
             (cooalpha) node[right,yshift=3pt]{$\alpha$}
          -- node[right,yshift=5pt]{$l$} (cootop)
          (coogamma) .. controls (cooct1) .. node[left=-1pt]{$i$} (cooalpha)
          (coodelta) .. controls (cooct2) .. node[right=-2pt]{$k$} (coobeta)
          (coobot) .. controls (cooct3) .. node[left=-1pt]{$\lb$} (cootop) ;
  \basmorph {cooalpha}; \basmorph {coobeta}; \basmorph {coogamma}; \basmorph {coodelta};
  \basmorphO {cootop}; \basmorpho {coobot}; \basmorphO {coomidl}; \basmorpho {coomidr};
  \end{tikzpicture}
  }
  \ee
Invoking \eqref{eq:pic:Fo,Go} this gives
  \be
  g_{23} = \F ijklpq \alpha\beta\gamma\delta \,.
  \ee
\end{exa}

%%%%%%%%%%%%%%%%%%%%%%%%%%%%%%%%%%%%%%%%%%%%%%%%%%%%%%%%%%%%%%%%%%%%

\section{From simplicial sets to fusion categories} \label{sec:app-A}

One of the ingredients in our study of \sjs s is the graphical calculus based on simplicial diagrams. This calculus is somewhat less standard than the string diagram calculus that is described in Appendix \ref{sec:app6j}. Both diagrammatics have their own advantages and disadvantages. The present appendix provides the proper categorical setting for the simplicial semantic.
In more fancy terms, what we are doing in this appendix is to formulate facts from simplicial homology of the geometric realization of linear monoidal categories in the light of the $(\infty,2)$-ca\-te\-gorical structure of a category enriched over a \Vectk-enriched category.  Comprehensive information on these topics is given in \cite{JOya} and \cite{RIve},

\subsection{Simplicial sets and quasi-categories}

The natural domain of simplicial semantic is the theory of quasi-categories. As this is not commonly used in the fusion categorical context that we consider in the main text, we offer a brief general introduction to this topic, before specializing to linear monoidal categories.

\begin{defi} \label{def:Delta}
(i) The \textit{simplex category} $\Delta$ is a skeleton of the category of non-empty finite ordered sets. The objects of $\Delta$ are denoted by $\mathbf{n} \eq \{0,1,2,...\,,n\}$. The morphisms between them are order-preserving functions.
\\[2pt]
(ii) The \textit{face} maps $\delta_i$ and \textit{degeneracy} maps $\sigma_j$ are those morphisms
  \be
  \raisebox{-1.3em}{\begin{tikzpicture}
  \draw (0,0) node{$\mathbf{n}$};
  \draw (2.2,0) node{$\mathbf{n{+}1}$};
  \draw[right hook->] (0.27,0.12) to ["$\delta_i$"] +(1.35,0);
  \draw[<<-] (0.27,-0.12) to [swap,"$\sigma_j$"] +(1.35,0);
  \end{tikzpicture}}
  \ee
in $\Delta$ which \textit{skip} the $i$th element of an ordered set and \textit{identify} $j$ with $j{+}1$, respectively.
\end{defi}

The face and degeneracy maps satisfy $\sigma_i \cir \delta_i \eq \id$, and they generate the category $\Delta$. Note that here and below we slightly abuse notation by not specifying the domains  $\mathbf{n}$ and and {\bf n+1}, respectively, of these morphisms. That is, each of the symbols $\delta_i$ and $\sigma_j$ can refer to a multitude of morphisms which coincide up to inclusion maps.

\begin{defi}
A \textit{simplicial set} is a contravariant functor from $\Delta$ to the category of sets.
\end{defi}

We may think of a simplicial set $C$ as a countable family $\{C_n\}$ of sets $C_n \,{\equiv}\, C(\textbf{n})$ together with the data of what the generators of $\Delta$ are mapped to. By the Yoneda lemma, the elements of the set $C_n$ are determined by natural transformations from $\Hom_{\Delta}(-,\mathbf n)$ to $C$. We may visualize $\Hom_{\Delta}(-,\mathbf n)$ as follows. We draw an ordered $n$-simplex in such a way that its $m$-subsimplices represent the iterated face maps in $\Hom_{\Delta}(\mathbf m,\mathbf n)$. We omit labels for the non-injective maps, since those factor through the subsimplices by restriction. Specifically, for $n \eq 3$ this prescription looks as follows.
  \begin{align}
  \begin{split}
 \Hom_\Delta(0,3) ~= \hspace*{-0.5em}
  \raisebox{-3.3em} {\simptex[scale=0.6]{
  \internalthreesimplex[0-1-2-\gray{3}]}}
 \qquad \Hom_\Delta(1,3) ~= \hspace*{-0.5em}
  \raisebox{-3.7em} {\simptex[scale=0.6]{
  \internalthreesimplex[][01,12,23,13,02,03]}}
\\
 \Hom_\Delta(2,3) ~= \hspace*{-0.5em}
  \raisebox{-3.7em} {\simptex[scale=0.6]{
  \internalthreesimplex[][][\gray{013},\gray{123},012,\gray{023}]}}
 \qquad~~ \Hom_\Delta(3,3) ~= \hspace*{-0.5em}
  \raisebox{-3.7em} {\simptex[scale=0.6]{
  \internalthreesimplex[][][][] }}
  \end{split}
  \label{eq:Hom03..Hom33}
  \end{align}

To facilitate the appreciation of the three-dimensional structure of the simplices, here and below we emphasize that structure by drawing the ``back" of a simplex in greyscale and displaying their labels laterally reversed.

Next, to visualize each of the sets $C_n$, we replace the labels by the values they take under a natural transformation $\phi \colon \Hom_{\Delta}(-,\mathbf n) \,{\rightarrow}\, C$. Following the standard Yoneda Lemma argument, we use the naturality condition $\phi_m(f) \eq C(f)(\phi_n(\id_{\mathbf n}))$ 
arising from the diagram
  \begin{align}
  \begin{tikzcd}[ampersand replacement=\&]
  \Hom_\Delta(\textbf{n},\textbf{n}) \ar[rr,"{\Hom_\Delta(f,\textbf{n})}"] \ar[d,swap,"\phi_n"] 
  \& \& 
  \Hom_\Delta(\textbf{m},\textbf{n}) \ar[d,"\phi_m"]
  \\[5pt]
  C(\textbf{n}) \ar[rr,"C(f)"] 
  \& \& 
  C(\textbf{m})
  \end{tikzcd}
  \end{align}
to conclude that each such natural transformation $\phi$ is specified by a unique element $\phi_n(\id_{\mathbf{n}}) \iN C_n$.

For $n \iN \{1,2,3\}$, two alternative ways of executing this visualization are shown in the following diagrams:
  \begin{align}
  C_1 \ni~ f ~=~ 
  \raisebox{-0.3em}{\onesimplex[f_0-f_1][f_{01}]}
  & ~=~ \raisebox{-0.3em}{\onesimplex[f^1-f^0][f]}
  \\
  C_2 \ni~ \gamma ~=~
  \raisebox{-2.5em}{\simptex[scale=0.9]{
  \internaltwosimplex[\gamma_0-\gamma_1-\gamma_2][\gamma_{01},\gamma_{12},\gamma_{02}][\gamma_{012}]
  }}
  & ~=~ \raisebox{-2.5em}{\simptex[scale=0.9]{
  \internaltwosimplex[\gamma_0-\gamma_1-\gamma_2][\gamma^{2},\gamma^{0},\gamma^{1}][\gamma]%[x-y-z][f,g,fg][\gamma]
  }}
  \\
  C_3 \ni~ F ~= \hspace*{-0.9em}
  \raisebox{-5.0em}{\simptex[scale=0.8]{
  \internalthreesimplex[F_0-F_1-F_2-\gray{F_3}][F_{01},F_{12},F_{23},F_{13},F_{02},F_{03}][\gray{F_{013}},\gray{F_{123}},F_{012},\gray{F_{023}}]
  \draw(3,2) node{$\mathbf{F_{0123}}$};
  }}
  \hspace*{-0.3em} & = \hspace*{-0.3em}
  \raisebox{-5.0em}{\simptex[scale=0.8]{
  \internalthreesimplex[F_0-F_1-F_2-\gray{F_3}][F_{01},F_{12},F_{23},F_{13},F_{02},F_{03}][\gray{F^{2}},\gray{F^{0}},F^{3},\gray{F^{1}}]%[x-y-z-\gray{w}][f,g,h,gh,fg,fgh][\gray\alpha,\gray\beta,\gamma,\gray\delta]
  \draw(3,2) node{$\mathbf{F}$};
  }}
  \end{align}
(Here the use of superscripts versus subscripts indicates the face-vertex duality ordering when applicable.)

As is apparent from these pictures, we quickly run out of dimensions with which to visualize the geometries involved, and out of space needed to project the labels. It is therefore advisable to omit some of the labels when they are unambiguous from the context and we only wish to access some of the data related to a given natural transformation. In case more precision is needed, we may alternatively draw a diagram multiple times, each time labeling subsimplices of different dimension, as we have indeed done for $\Hom(-,\textbf{n})$ in the pictures above. In practice, this amounts to giving the simplex a cell decomposition as a topological object.

Further terminology that is useful when discussing simplicial sets is as follows:

\begin{defi}
(i) A \textit{face} of a simplex is a subsimplex of codimension one. We use the bijection furnished by face-vertex duality to define an ordering of faces according to which 0-simplex they do \textit{not} contain; faces are then given by face maps as $C(\delta_i)(F) \eq F^i$.
\\[2pt]
(ii)
A \textit{degenerate} simplex is an image of a degeneracy map, which we visualize by extending the simplex with equality symbols as labels. As an illustration, we have
  \begin{align}
  C(\sigma_2):~~ 
  \raisebox{-0.3em}{\onesimplex[x-y][f]}
  ~~\longmapsto~ \raisebox{-0.7em}{\simptex[baseline=3mm]{\internaltwosimplex[x-y-y][f,=,f][=]}}
  \end{align}
\\[2pt]
(iii) The \textit{boundary} $\partial X$ of a simplex $X$ is the union of all its faces. The only additional information needed to recover a simplex from its boundary is its \textit{filling}.
\\[2pt]
(iv) An ordered sequence $\{f^n\}$ of $1$-simplices is \textit{compatible} if the target of any one of them matches the source of the next, i.e.\ if $f^n_1 \eq f^{n+1}_0$.
\\[2pt]
(v) The \textit{spine} of a simplex is the longest compatible ordered sequence of 1-subsimplices. The spine of an $n$-simplex has length $n$.
\end{defi}

We are now ready to introduce the notion of a quasi-category.

\begin{defi}
(i) A \textit{k-horn} of a simplex is a union of all faces except the $k$th face. A $k$-horn is called \textit{inner} if $k$ is neither 0 nor $n$.
\\[2pt]
(ii) A \textit{quasi-category} is a simplicial set for which every inner horn has at least one filling.
\\[2pt]
(iii) In a quasi-category, a compatible ordered sequence of $1$-simplices is called \textit{composable},
as it can be recovered as the spine of some $n$-simplex for which the $0n$-edge is considered a \textit{composite}.
\end{defi}

In the rest of this appendix we assume the simplicial sets that we consider to be quasi-categories. The justification for the terminology ``horn" and ``quasi-category" comes from the ability to fill -- albeit non-uniquely -- horn-shaped composition-like diagrams.
For instance, 
  \begin{align}
  \text{for any diagram of type}~~
  \raisebox{-0.2em}{\simptex[scale=0.7,baseline=4mm]{
  \twosimplexvertices x-y-z- 
  \ddraw (nx.center) to["$f$"] (ny.center);
  \ddraw (ny.center) to["$g$"] (nz.center);
  }}
  ~~\text{there exists a filling}~~
  \raisebox{-0.2em}{\simptex[scale=0.7,baseline=4mm]{
  \internaltwosimplex[x-y-z][f,g,h][\gamma]
  }}
  \end{align}
Here $h$ can be considered a composite of $f$ and $g$. Note, however, that it is not \textit{the} composite, as there may be many different ones. Thus this ability to fill horns does not quite make the quasi-category into a category. The connection with ordinary category theory is made more precise by the \textit{homotopy-nerve adjunction} between quasi-categories and categories, which is furnished by the following two definitions.

\begin{defi} \Cite{Def.\,1.1.7}{RIve} Two $1$-simplices
\raisebox{-8pt}{\ensuremath{
    \begin{tikzpicture}[scale=0.7,baseline=-4mm]
    \draw (-0.05,0) node{\textit{x}};
    \draw (2.05,0) node{\textit{y}};
    \draw[thick, ->] (1.75,0.13) -- node[above=-1pt]{$\scriptstyle f$} (0.25,0.13);
    \draw[thick, ->] (1.75,-0.13) -- node[below=-1pt]{$\scriptstyle g$} (0.25,-0.13);
    \end{tikzpicture}
} }
are called \textit{homotopic} -- an equivalence relation denoted by $f \,{\sim}\, g$ -- if there exists a 2-simplex $\alpha$ of either of the equivalent types
  \begin{align}
  \raisebox{-2.6em}{\simptex{ \internaltwosimplex[x-y-y][f,=,g][\alpha]}}
  \qquad \text{or} \qquad
  \raisebox{-2.6em}{\simptex{ \internaltwosimplex[x-x-y][=,f,g][\beta]}}
  \end{align}
\end{defi}

\begin{defi} \label{def:nerve-etc}
(i) Let \C\ be a small category. The \textit{nerve} $N\C$ is the quasi-category with $N\C_0 \eq \Obj_\C$ and higher simplices generated by composition of morphisms.
\\[2pt]
(ii) Let $\C$ be a quasi-category. The \textit{homotopy category} $h\C$ of $\C$ is the category with objects given by $\Obj_{h\C} \eq \C_0$ and morphisms given by homotopy classes of 1-simplices.
\\[2pt]
(iii) In a quasi-category \C, a $1$-simplex $f$ is called an \textit{isomorphism} if it is mapped to an isomorphism in the homotopy category. Equivalently, for $f$ an isomorphism there is a $1$-simplex $f\inv$ and a pair
  \begin{align}
  \raisebox{-0.7em}{\simptex[baseline=4mm]{\internaltwosimplex[a-b-a][f,f\inv,=][\alpha]}}
  \qquad \text{and} \qquad
  \raisebox{-0.7em}{\simptex[baseline=4mm]{\internaltwosimplex[b-a-b][f\inv,f,=][\beta]}}
  \end{align}
of 2-simplices.
\end{defi}

%%%%%%%%%%%%%%%%%%%%%%%%%%%%%%%%%%%%%%%%%%%%%%%%%%%%%%%%%%%%%%%%%%%%

\subsection{From monoidal categories to quasi-categories}

Recall the \textit{simplex category} $\Delta$ from Definition \ref{def:Delta}.

\begin{defi}
The \textit{augmented simplex category} $\Delta_*$ is the monoidal category constructed from $\Delta$ by adjoining an initial object $\mathbf{-1} \eq \emptyset$, with the tensor product given by disjoint union and with $\mathbf{-1}$ the monoidal unit.
\end{defi}

Every monoid object in any category is given by a monoidal functor from $\Delta_*$, making $\Delta_*$ the \textit{diagram category} for monoids, with string diagrams as the standard semantic. This mirrors how $\Delta$ is the diagram category for simplicial objects, with its own diagrammatic semantic.

While related, the two semantics have their own strengths and weaknesses. String diagrams are algebraic in nature, and when extended to a third dimension, they readily also model braided monoidal categories. But since objects are 1-dimensional and morphisms 0-dimensional, the standard semantic is not able to describe also higher-dimensional simplices. Meanwhile, simplices are combinatorial and can describe quasi-categories, but the standard semantics are not very useful for describing products of objects. However, alternative simplicial semantics are available when one constructs higher categories by enriching over monoidal categories.

To utilize these observations in our context, we introduce a single-object \C-enriched quasi-category $Q\C$ as follows.

\begin{defi} \label{def:QC} 
Let \C\ be a monoidal category \C. Then $Q\C$ is the quasi-category that is obtained from \C\ by lifting the dimension of every simplex by 1.  More explicitly, we introduce a single object $*$ to serve as the 0-cell, the objects of \C\ become 1-simplices, the 1-morphisms of \C\ become 2-simplices, and 2-morphisms become 3-simplices. The monoidal product in \C\ becomes the (not necessarily associative) morphism composition in $Q\C$. For $n \,{>}\, 3$ we set $Q\C_n$ to be trivial, i.e.\ consisting of only the simplices generated by lower-dimensional ones. (The latter accounts for the fact that	the monoidal categories of our interest have no extra higher morphisms.)
\end{defi}

To elaborate on this semantic, we usually omit labels for the vertices, since they can take only a single value. (We may label vertices by their ordering, though, in case we need to refer to them in some other context.) Further, for $n \,{\in}\, \{2,3\}$ we use the orientation given by the right hand rule to partition the boundary of a simplex into an in- and an out-boundary, and we read a \textit{labeled} $n$-simplex as an $n$-morphism in $Q\C$ from its in-boundary to its out-boundary, while an \textit{unlabeled} simplex stands for the space of possible labels. Thus we have e.g.
  \begin{align}
  \raisebox{-2.5em}{\twosimplex[][x,y,z][f]}
  \!\!=~ f ~~ \in ~~ \Hom_\C(z,x\otimes y) ~=\!\!
  \raisebox{-2.5em}{\twosimplex[][x,y,z]}
  \end{align}
 
There are two ways to compose 2-simplices; they are determined by whether the simplices connect via an initial/final vertex (1-composition over a 0-simplex) or via an edge (2-composition over a 1-simplex). The two possibilities look like
  \begin{align}
  f \otimes g ~=\!\!
  \raisebox{-0.5em}{\simptex[scale=1,baseline=3mm]{
  \internaltwosimplex[][x,y,z][f]
  \ddraw (nz.center) to["$k$" swap] (4,0);
  \ddraw (nz.center) to["$i$"] (3,1.4);
  \ddraw (3,1.4) to["$j$"] (4,0);
  \draw (3,0.5) node{$g$};
  }}
  & & \hspace*{-1.3em} \text{and} \qquad f\circ g ~=
  \raisebox{-3.25em}{\simptex{
  \internaltwosimplex[][i,j,k][f]
  \ddraw (1,-1.4) to["\ensuremath{n}" swap] (2,0);
  \ddraw (0,0) to["$m$" swap] (1,-1.4);
  \draw (1,-0.7) node{$g$};
}}
  \end{align}
respectively.

A crucial observation is now that every planar graph generated by these triangles is graph-dual to a planar string diagram, by way of the prescription
  \begin{align}
  \raisebox{-3.1em}{\simptex{
    \draw[red!50] (1,0.5) to (1,0);
    \ddraw[red!50] (1,0) to (1,-0.8);
    \ddraw[red!50] (-0.2,1.5) to (0.4,1);
    \draw[red!50] (0.4,1) to (1,0.5);
    \ddraw[red!50] (2.2,1.5) to (1.6,1);
    \draw[red!50] (1.6,1) to (1,0.5);
  \internaltwosimplex[][{},{},{}][\alpha]
    \draw (0.31,0.76) node{$i$};
    \draw (1.76,0.86) node{$j$};
    \draw (0.81,-0.25) node{$k$};
  } }
  \end{align}
The unlabeled empty regions of string diagrams become unlabeled vertices, strands become edges, and vertices become surfaces. It is therefore merely a matter of convenience and clarity to decide which semantic to employ in a given argument.

%%%%%%%%%%%%%%%%%%%%%%%%%%%%%%%%%%%%%%%%%%%%%%%%%%%%%%%%%%%%%%%%%%%%

\subsection{From Hochschild homology to fusion quasi-categories}\label{sec:Hochschild2fusion}

In the sequel we assume that \ko\ is an algebraically closed field of characteristic zero and \C\ is a semisimple \ko-linear monoidal category with a finite number of isomorphism classes of simple objects. We choose a set $\S$ of representatives of these classes containing the monoidal unit $1$ and assume further (compare (ii) and (iii) of Definition \ref{def:ourfusion}) that for $i,j \iN \S$ the morphism space $\HomC(i,j)$ has dimension $\delta_{i,j}$ and that to any $i \iN \S$ there exists a unique $\Bar{i} \iN \S$ such that $\HomC(j\oti \Bar i,1)$ and $\HomC(\Bar i\oti j,1)$ have dimension $\delta_{i,j}$, for every $j\iN\S$.
 
To such a category \C\ we can associate its Grothendieck ring $\Gr(\C)$ by taking the projective module generated by the simple objects, with multiplication inherited from the tensor product \Cite{Def.\,4.5.2}{EGno}. By a slight abuse of notation, we here use the symbol $\Gr(\C)$ for the corresponding associative algebra over the ground field \ko.
In this section, we seek to go the opposite way, by considering the minimal full and dense monoidal subcategory of \C\ that contains all the information necessary to compute the \sjs s. To do so, we will utilize simplicial set theory to invoke some homological structures that let us relate the ring-theoretic and category-theoretic properties of the given category.

Recall that the \textit{Hochschild complex} of a ring $A$ with coefficients in an $A$-bimodule $M$ is given by the formal tensor powers of $A$: it is the chain complex
  \be
  M \xleftarrow{~\partial~} M\oti A \xleftarrow{~\partial~} M\oti A^2 \xleftarrow{~\partial~} M \oti A^3 \xleftarrow{~\partial~} \dots 
  \label{eq:HH}
  \ee
with boundary map $\partial \eq \sum_{i=0}^n (-1)^i\partial_i$ with
  \be
  \partial_i (a_0 \,{\otimes} \cdots {\otimes}\, a_n)
  = (a_0 \,{\otimes} \cdots {\otimes}\, a_{i-1} \oti (a_ia_{i+1}) \oti a_{i+2} \,{\otimes} \cdots {\otimes}\, a_n) 
  \ee
for $i \iN \{1,2,...\,,n{-}1\}$ and
  \be
  \partial_n (a_0 \,{\otimes} \cdots {\otimes}\, a_n) = ((a_na_0) \oti a_1 \,{\otimes} \cdots {\otimes}\, a_{n-1}) \,.
  \ee
  
In terms of simplices, each $\partial_i$ amounts to the list of 1-subsimplices of each face of the simplex of which the domain lists the spine, with overall sign given by the orientation of the face. The homology of this chain complex is called the Hochschild homology of $A$ with coefficients in $M$.

 \medskip

To explore how we can make use of this tool, let us as a warmup consider the special case that the fusion category \C\ is pointed. Then the associator of \C\ amounts to a group 3-cocycle on $\Gr(\C)$ \Cite{Ch.\,9.7}{EGno} or, equivalently, to a 3-cocycle in the dual Hochschild complex of $\Gr(\C)$ with coefficients in \ko. The possibility to make use of the dual Hochschild complex rests on the coboundary structure that is naturally induced on functions from the Hochschild complex to the ground field \ko\ (we denote the corresponding chain map by $F$) and from the fact that in the pointed case \ko\ is isomorphic to both the 1-morphism-spaces $\Hm ij{i\otimes j}$ and the 2-morphism spaces $\Hom_\ko(\Hm ij{i\otimes j} \oti \Hm{i\otimes j}k{(i\otimes j)\otimes k}, \Hm i{j\otimes k}{i\otimes(j\otimes k)} \oti \Hm jk{j\otimes k})$.
Thus after fixing a ``framing'', i.e.\ basis choice $\mathrm b\colon \ko \To H$, for each such space $H$, Hochschild cochains may be interpreted as elements of indexed sets of homomorphisms from $\Gr(\C)^{\otimes n}$ to \ko, so that we arrive at
  \begin{align}
  \begin{tikzcd}[ampersand replacement=\&]
  * \& \ar[l] \Gr(\C) \ar[d, "F_1" swap]
  \& \Gr(\C)^{\otimes 2} \ar[l,"\partial"]\ar[d,"F_2" swap]
  \& \Gr(\C)^{\otimes 3} \ar[l,"\partial"]\ar[d,"F_3" swap]
  \& \ar[l] \dots \ar[d,"F_4"]
    \\
  \& \ko \& \ko \ar[d, "\mathrm b" swap] \& \ko \ar[d] \& \ko 
  \\
  * \& \ar[l] Q\C_1 \& Q\C_2\ar[l,"\partial"] \& Q\C_3\ar[l,"\partial"] \& \ar[l] \dots
  \end{tikzcd}
  \label{eq:pointedcase}
  \end{align}
This way we obtain ``framed values'' of the maps $F_i$. Specifically, for $F_2$ we get $\alpha \cdo F_2(i\oti j) \iN \Hm ij{i\otimes j}$ for $i,j\iN\Gr(\C)$ and basis $\mathrm b \eq \alpha \,{\equiv}\, \alpha_{i,j} \colon \ko \To \Hm ij{i\otimes j}$. (We use the symbol ``$\cdot$'' to indicate that the basis choice furnishes an action of \ko\ of $Q\C_2$, whereby  we may think of $\alpha$ as a number.) 
Likewise we have ``framed \sjs s'' given by $(\alpha \oti \beta) \cdo F_3(i\oti j\oti k)\cdo (\bar{\gamma} \oti \bar{\delta})$ with $\alpha \,{\equiv}\, \alpha_{i,j \otimes k}$, $\beta \,{\equiv}\, \beta_{j,k}$, $\gamma \,{\equiv}\, \gamma_{i,j}$ and $\delta \,{\equiv}\, \delta_{i \otimes j,k}$. 
The framed coboundary of $F_2$ is thus
  \be
  \bearl
  (\alpha \oti \beta) \cdo \mathrm d F_2(i\oti j\oti k) \cdo (\cc\gamma \oti \cc\delta)
  = (\alpha \oti \beta) \cdo F_2(\partial(i\oti j\oti k)) \cdo (\cc\gamma \oti \cc\delta)
  \Nxl2
  \qquad\qquad = \big( \alpha \cdo F_2(i\oti jk) \otimes \beta \cdo F_2(j\oti k) \big) 
  \circ \big( \cc\gamma \cdo F_2(i\oti j)\inv_{} 
  \otimes \cc\delta \cdo F_2(ij\oti k)\inv_{} \big) \,.
  \eear
  \ee

When interpreting the cochain structure categorically, some caveats are to be noted.
First of all, 1-cochains do not necessarily frame the 1-simplices even for a pointed fusion category (put differently, $F_1$ cannot be interpreted as an object), as indicated by the lack of a corresponding vertical map in \eqref{eq:pointedcase}.
Second, as there are several notions of sums and products involved, with some of them constructed from each other, some abuse of notation is inevitable. Third, the coboundary of an $n$-cochain is defined computationally by reinterpreting $\mathrm dF_n \,{\equiv}\, F_n\partial$ as an $(n{+}1)$-cochain, and replacing the $n$-framing with the $(n{+}1)$-framing.
 
More explicitly, we may consider the associator-values $F_3$ to be a coboundary of $F_2$, and hence $F_3$ satisfies the pentagon equation since that is the group 3-cocycle equation. We can always choose a frame such that the framed values of $F_2$ are our basis vectors, which we can consider the \textit{composer} maps of the simple objects, in analogy of $F_3$ being the \textit{associator}.

\medskip

In the general case that the fusion category \C\ is not pointed, the reasoning above will typically fail. Specifically, the various morphism spaces will have different dimensions, so that a coboundary computed in the way described above cannot, in general, be directly reframed as a higher morphism. In fact, the numerical coefficients for different basis vectors in the framing will not generally coincide. Instead, we have to consider the framed values of $F_2$ to be vectors and the framed values of $F_3$ to be block matrices, and so on.
In order to extend the formalism, we must then know where the cochains should be valued, know how to compute coboundaries, and be able to interpret the data in terms of higher morphisms. 
To this end we adopt the following conventions.

\begin{defi} \label{def:SCetc}
Let \C\ be a semisimple \ko-linear monoidal category satisfying the additional properties listed at the beginning of this subsection and let $\S$ be the corresponding finite set of representatives for the isomorphism classes of simple objects. Let $Q\C$ be the associated \C-enriched quasi-category as given in Definition \ref{def:QC}.
 \\[3pt]
(i)
By $GQ\C_n$ we denote the groupoid of isomorphisms in $Q\C_n$ -- by which we mean composition-invertible morphisms for $n\,{>}\,1$, and isomorphisms in the sense of Definition \ref{def:nerve-etc}(iii), but restricted to non-zero fillers, for $n \eq 1$.
 \\[3pt]
(ii)
By $\S\C$ we denote the full and dense monoidal subcategory of \C\ that is generated (via composition and tensor products) by the objects in $\S$, i.e.\ the full subcategory that has as objects all expressions that can be built out of elements of $\S$ and of $\oplus$, $\otimes$ and parentheses.
 \\[3pt]
(iii)
By $Q\S\C_n$ we denote the set of $n$-simplices in the quasi-category associated with $\S\C$, i.e.\ the $n$-simplices for which every edge is an object in $\S\C$.
 \\[3pt]
(iv)
By $\RSC$ we denote the free unital non-associative ring generated by the set of isomorphism classes of simple objects of \C.
\end{defi}

We now take the Hochschild complex \eqref{eq:HH} for $A \eq \RSC$ and $M$ be the trivial bimodule $Q\C_0 \,{=}\, *$,
and consider isomorphism-valued spine-filling maps that send the generators of $\RSC$ to their representatives in $\S$. Thereby we have a chain map
  \begin{align}
  \begin{tikzcd}[ampersand replacement=\&]
    * \ar[d,equals]\& \ar[l] \RSC \ar[d, "F_1", two heads] \&
    \RSC^{\times 2} \ar[l,"\partial"]\ar[d,"F_2"] \&
    \RSC^{\times 3} \ar[l,"\partial"]\ar[d,"F_3"] \&
    \ar[l] ...\ar[d,two heads]
    \\
     * %\ar[d,equals]
     \&
     \ar[l] \langle GQ\S\C_1\rangle %\ar[d,equals]
     \&
     \langle GQ\S\C_2\rangle\ar[l,"\partial"] %\ar[d,"frame"]
     \&
     \langle GQ\S\C_3\rangle\ar[l,"\partial"] %\ar[d,"frame"]
     \&
     \ar[l] *
    \end{tikzcd}
  \end{align}
Here we denote by $\langle X_n\rangle$ the free $\mathbb Z$-module generated by a a variable $X_n$.

Properties of these chain maps are related to categorical properties of the full and dense monoidal subcategory $\S\C$ as follows:

\begin{enumerate}
    \item
$F_1$ selects the objects of$\S\C$, i.e.\ the representative simple objects and the objects constructed by them according to Definition \ref{def:SCetc}(ii).
    \item
In order for $\S\C$ to be a monoidal subcategory in the standard sense, $F_2$ must map the basis elements to identity morphisms. (Otherwise, $F_2$ yields a \textit{twisted} monoidal product, defined as for twisted group algebras, according to $[x] \,{*}\, [y] \,{=}\, f(x,y)\,[xy]$ with $f$ a group 2-cocycle.
    \item
In order for $\S\C$ to be \textit{strict}, $F_3$ must map the basis elements to identity 2-morphisms.
\item In order for the tensor product of $\S\C$ to satisfy the pentagon equation, $F_4$ must map the basis elements to identity 3-morphisms.
    \item
Strict unitality amounts to the 0-simplex $*$ lifting to the unit 1-simplex by the face map.
    \item
The filling of a 2-horn, considered as an $F_1\partial$-value, determines a value of $F_2$.
    \item
The filling of a 3-horn,
considered as an $F_2\partial$-value, determines a value of $F_3$.
\end{enumerate}

\noindent
By construction, the monoidal category $S\C$ comes equipped with rules for the fillings of horns; the $i{\otimes}j$-horn is filled by $\id_{i\otimes j}$, and any horn with the spine $i\oti j\oti k$ and oriented $F_2$-valued faces is filled by the 2-morphism $F_3(i\oti j\oti k) \eq \FF^{(i\,j\,k)\,i\otimes j \otimes k}_{(j\otimes k),(i\otimes j)}$. It is this filling that provides the coboundary structure $\mathrm d$.
In particular, at level 3 we can note that that $F_3$ is a 3-cocycle in the sense that its coboundary is trivial when evaluated with concrete indices. 
(On the other hand, for coboundaries of indexed 3-morphisms that are linearly independent of $F_3$ there need not exist such a notion of coboundary -- the filling prescription is part of the data that comes with the monoidal structure.)

Making basis choices allows us to compute matrix coefficients by projection to the simple objects. Specifically we recognize
  \begin{align}
          ~\nonumber \\[-2.5em]
  F_2(i\oti j)_{\alpha} ~=&
  \simptex[baseline=4mm]{\internaltwosimplex[][i,j,k][\alpha]} \in \Hom_{\S\C}(k,i\otimes j)
  \\[-5pt]
  \text{and} \qquad F_3(i\oti j\oti k)_{\beta\bar{\delta} \alpha\bar{\gamma}} ~= &
  \hspace*{-0.5em} \raisebox{-1.6em}{
  \simptex[baseline=4mm,scale=0.5]{\internalthreesimplex[][i,j,k,p,q,l][\gray\alpha,\gray\beta,\gamma,\gray\delta]}}
  \end{align}

\begin{exa}
Let us also mention how the case of \C\ being pointed is formulated in the so-obtained setting. We may identify the one-dimensional spaces $\Hm ijk$ with the ground field \ko, with the frame of $F_2$ as the chosen basis.
Then $F_3$ takes values in the multiplicative group of \ko, and covector duality corresponds to inversion in \ko.
Thereby addition in the ring $\RSC$ coincides with multiplication in \ko, and the filling procedure provides an explicit cochain complex on the chain maps $F_*$ when restricted to the simple objects. This endows $Q\S\C_1$ with the structure of a twisted group algebra over \ko, while 
  \be
  \begin{array}{rcl}
  F_1(j)\, F_1(i) = F_2(i,j)\, F_1(ij) &\Longrightarrow& \mathrm d F_1 = F_2 \,,
  \Nxl3
  F_2(j,k)\, F_2(i,jk) = F_3(i,j,k)\, F_2(ij,k)F_2(i,j) &\Longrightarrow& \mathrm d F_2 = F_3 \,,
  \Nxl3
  F_3(j,k,l)\, F_3(i,jk,l)\, F_3(i,j,k) = F_4(i,j,k,l)\, F_3(ij,k,l)\, F_3(i,j,kl)\, &\Longrightarrow& \mathrm d F_3 = F_4 
  \eear
  \ee
provides the prescription for computing the coboundaries of $F_1$, $F_2$ and $F_3$.
\end{exa}

\newpage
%%%%%%%%%%%%%%%%%%%%%%%%%%%%%%%%%%%%%%%%%%%%%%%%%%%%%%%%%%%%%%%%%%%%

 \newcommand\wb{\,\linebreak[0]} \def\wB {$\,$\wb}
 \newcommand\Bi[2]    {\bibitem[#2]{#1}} 
 \newcommand\Epub[2]  {{\em #2}, {\tt #1}}
 \newcommand\Erra[3]  {\,[{\em ibid.}\ {#1} ({#2}) {#3}, {\em Erratum}]}
 \newcommand\inBO[9]  {{\em #9}, in:\ {\em #1}, {#2}\ ({#3}, {#4} {#5}), p.\ {#6--#7} {\tt [#8]}}
 \newcommand\inBOo[8] {{\em #8}, in:\ {\em #1}, {#2}\ ({#3}, {#4} {#5}), p.\ {#6--#7}}
 \newcommand\J[7]     {{\em #7}, {#1} {#2} ({#3}) {#4--#5} {{\tt [#6]}}}
 \newcommand\JO[6]    {{\em #6}, {#1} {#2} ({#3}) {#4--#5} }
 \newcommand\JP[7]    {{\em #7}, {#1} ({#3}) {{\tt [#6]}}}
 \newcommand\BOOK[4]  {{\em #1\/} ({#2}, {#3} {#4})}
 \newcommand\BOOP[2]  {{\em #2}, book in preparation \mbox{(#1)}}
 \newcommand\Lect[2]  {{\em #2}, Lecture notes (#1)}
 \newcommand\Mast[2]  {{\em #2}, Master thesis (#1)}
 \newcommand\PhD[2]   {{\em #2}, Ph.D.\ thesis #1}
 \newcommand\Prep[2]  {{\em #2}, preprint {\tt #1}}
 \def\aagt  {Alg.\wB\&\wB Geom.\wb Topol.}     
 \def\adma  {Adv.\wb Math.}
 \def\alnt  {Algebra\wB\&\wB Number\wB Theory}          
 \def\anip  {Ann.\wb Inst.\wB Henri\wB Poin\-car\'e}
 \def\anop  {Ann.\wb Phys.}
 \def\apcs  {Applied\wB Cate\-go\-rical\wB Struc\-tures}
 \def\atmp  {Adv.\wb Theor.\wb Math.\wb Phys.}   
 \def\anma  {Ann.\wb Math.}
 \def\bams  {Bull.\wb Amer.\wb Math.\wb Soc.}
 \def\camb  {Ca\-nad.\wb Math.\wb Bull.}
 \def\cocm  {Com\-mun.\wb Con\-temp.\wb Math.}
 \def\coma  {Con\-temp.\wb Math.}
 \def\comp  {Com\-mun.\wb Math.\wb Phys.}
 \def\epjb  {Eur.\wb Phys.\wb J.\ B}
 \def\fiic  {Fields\wB Institute\wB Commun.}
 \def\fuaa  {Funct.\wb Anal.\wb Appl.}
 \def\ijmp  {Int.\wb J.\wb Mod.\wb Phys.\ A}
 \def\imrn  {Int.\wb Math.\wb Res.\wb Notices}
 \def\injm  {Int.\wb J.\wb Math.}
 \def\jams  {J.\wb Amer.\wb Math.\wb Soc.}
 \def\jems  {J.\wb Europ.\wb Math.\wb Soc.}
 \def\jfst  {J.\wb Fac.\wb Sci.\wb Univ.\wB Tokyo}
 \def\jgap  {J.\wb Geom.\wB and\wB Phys.}
 \def\jomp  {J.\wb Math.\wb Phys.}
 \def\jhrs  {J.\wB Homotopy\wB Relat.\wB Struct.}
 \def\jktr  {J.\wB Knot\wB Theory\wB and\wB its\wB Ramif.}
 \def\jmsj  {J.\wb Math.\wb Soc.\wB Japan}
 \def\joal  {J.\wB Al\-ge\-bra}
 \def\jopa  {J.\wb Phys.\ A}
 \def\jpaa  {J.\wB Pure\wB Appl.\wb Alg.}
 \def\jram  {J.\wB rei\-ne\wB an\-gew.\wb Math.}
 \def\lemp  {Lett.\wb Math.\wb Phys.}
 \def\momj  {Mos\-cow\wB Math.\wb J.}
 \def\leni  {Lenin\-grad\wB Math.\wb J.}
 \def\marl  {Math\wb. Res.\wb Lett.}
 \def\masc  {Math.\wb Scand.}
 \def\nejp  {New J.\wB Phys.}
 \def\npbp  {Nucl.\wb Phys.\ B (Proc.\wb Suppl.)}
 \def\nupb  {Nucl.\wb Phys.\ B}
 \def\pajm  {Pa\-cific\wB J.\wb Math.}
 \def\phlb  {Phys.\wb Lett.\ B}
 \def\quto  {Quantum Topology}
 \def\phrb  {Phys.\wb Rev.\ B}
 \def\phrx  {Phys.\wb Rev.\ X}
 \def\tams  {Trans.\wb Amer.\wb Math.\wb Soc.}
 \def\rvmp  {Rev.\wb Math.\wb Phys.}
 \def\sema  {Selecta\wB Mathematica}
 \def\slnm  {Sprin\-ger\wB Lecture\wB Notes\wB in\wB Mathematics}
 \def\topo  {Topology}
 \def\trgr  {Trans\-form.\wB Groups}

\small

%%%%%%%%%%%%%%%%%%%%%%%%%%%%%%%%%%%%%%%%%%%%%%%%%%%%%%%%%%%%%%%%%%%%%%%%

\end{document}